\numberwithin{equation}{subsection}
\theoremstyle{definition}
\newtheorem*{theorem*}{Theorem}
\newtheorem*{lemma*}{Lemma}
\newtheorem*{prop*}{Proposition}
\newcommand{\bZ}{\mathbf{Z}}
\newcommand{\bR}{\mathbf{R}}
\newcommand{\bP}{\mathbf{P}}
\newcommand{\Hom}{\operatorname{Hom}}
\newcommand{\Ext}{\operatorname{Ext}}
\newcommand{\cO}{\mathcal{O}}
\newcommand{\coker}{\operatorname{coker}}
\newcommand{\Coh}{\operatorname{Coh}}
\newcommand{\Spec}{\operatorname{Spec}}
\newcommand{\Proj}{\operatorname{Proj}}
\newcommand{\Pic}{\operatorname{Pic}}
\renewcommand{\ker}{\operatorname{ker}}
\newcommand{\bF}{\mathbf{F}}
\newcommand{\CF}{\mathrm{CF}}
\newcommand{\HF}{\mathrm{HF}}
\newcommand{\cM}{\mathcal{M}}
\newcommand{\bC}{\mathbf{C}}
\newcommand{\bQ}{\mathbf{Q}}
\newcommand{\cE}{\mathcal{E}}
\newcommand{\cW}{\mathcal{W}}
\newcommand{\Ls}[1]{L_{(#1)}}
\newcommand{\area}{\mathrm{area}}
\newcommand{\mas}{\mathrm{mas}}
\newcommand{\Fuk}{\mathrm{Fuk}}
\newcommand{\FF}{{\mathit{FF}}}
\newcommand{\tabs}{\theta^{\mathrm{abs}}}
\title{A symplectic look at the Fargues-Fontaine curve}
\author{Yank\i\ Lekili and David Treumann}
\begin{document}

\maketitle

\section{Introduction}

This paper discusses homological mirror symmetry for the Fargues-Fontaine curve.

\subsection{The Fargues-Fontaine curve}
\label{intro:FF}
Let $E$ be a local field and let $C$ be a perfectoid field of characteristic $p$.  For each such pair $(E,C)$, Fargues and Fontaine have defined an $E$-scheme that we will denote by $\FF_E(C)$ --- it is denoted by $X_{C,E}$ in \cite[Def. 6.5.1]{FF}.  It is in no sense a ``curve over $E$'' or even a variety: it is not of finite type over $E$ or over any other field.  A scheme that is not of finite type over some base cannot be smooth or proper in the usual sense (\cite[Vol 4, \S 17; Vol 2, \S 5]{EGA}) and yet $\FF_E(C)$ resembles a closed Riemann surface is some peculiar ways:
\begin{itemize}
\item It is noetherian of Krull dimension one.  Moreover it is regular, so that the local ring at each closed point of $\FF_E(C)$ has a discrete valuation.
\item A nonzero rational function $f$ (that is, a section of $\cO_{\FF}$ over the generic point) has $v(f) \neq 0$ for at most finitely many of these valuations $v$, and $\sum_v v(f) = 0$
\end{itemize}
In fact $\FF_E(C)$ even resembles the Riemann sphere: one has
\[
\Pic(\FF_E(C)) = \bZ \text{ and } H^1(\cO_{\FF}) = 0
\]
There are some contrasts with the Riemann sphere: $\FF_E(C)$ has indecomposable vector bundles of higher rank, and its \'etale fundamental group is naturally isomorphic to the absolute Galois group of $E$.  Fargues has a program to apply these properties of $\FF_E(C)$ to the local Langlands correspondence \cite{Fargues}.

When $E = \bQ_p$, $\FF_E(C)$ is an important object in $p$-adic cohomology --- it was introduced to organize some of the structures of $p$-adic Hodge theory.  When $E = \bF_p(\!(z)\!)$, the analogous structures are those of Hartl \cite{Hartl}.   We have nothing to say about $\FF_{\bQ_p}(C)$ but we are able to touch $\FF_{\bF_p(\!(z)\!)}(C)$ with mirror symmetry.

\subsection{Homological mirror symmetry}
Homological mirror symmetry (HMS) is a framework for relating Lagrangian Floer theory on a symplectic manifold to the homological algebra of coherent sheaves on a scheme --- often, a scheme that is seemingly unrelated to the symplectic manifold.

What symplectic structure could be mirror to $\FF_{\bF_p(\!(z)\!)}(C)$?  We suggest the answer is a two-dimensional torus.  There is already a very well-studied mirror relationship between the symplectic torus and the Tate elliptic curve (over $\bZ(\!(t)\!)$), which we review in \S\ref{two}.  To get the Fargues-Fontaine curve in place of the Tate curve, we introduce two changes:
\begin{enumerate}
\item We couple Lagrangian Floer theory to a locally constant sheaf of rings on the torus --- the fiber of this sheaf of rings has characteristic $p$, and going around one of the circles is the $p$th power map.  (Going around the other circle is the identity map).
\item We set the Novikov parameter (this is the element $t \in \bZ(\!(t)\!)$ in the ground ring of the Tate curve) to $t = 1$ --- symplectically this is sort of like studying the limit as the symplectic form goes to $0$.
\end{enumerate}
Both of these maneuvers are unusual in symplectic geometry.  The first turns out to be straightforward, so that one obtains a Fukaya $A_{\infty}$-category with the usual properties.  The second is much more delicate and touches some folklore questions about ``convergent power series Floer homology.''

\subsection{Lagrangian Floer theory on the torus}
\label{intro:13}
Let $T$ be a $2$-dimensional torus, which we present as a quotient of $\bR^2$ by $\bZ^2$ and endow with the standard symplectic form $dx\,dy$. For each integer $m$, let $\Ls{m} \subset T$ denote the image of the line in $\bR^2$ through the origin of slope $-m$.  Let $\Ls{\infty}$ denote the image of the vertical line through the origin.  We orient $\Ls{m}$ from left to right and $\Ls{\infty}$ from top to bottom.  The figure shows $\Ls{0}$, $\Ls{\infty}$, and $\Ls{3}$ in a fundamental domain of $T$:
 \begin{center}
\begin{tikzpicture}[scale=0.7]
      \tikzset{->-/.style={decoration={ markings,
        mark=at position #1 with {\arrow[scale=2,>=stealth]{>}}},postaction={decorate}}}
         \draw[blue, ->-=.5]  (0,0) -- (6,0);
         \draw [blue, ->-=.5] (0,6) -- (0,0);
	\draw [blue, ->-=.5] (0,6)--(2,0);
	\draw [blue, ->-=.5] (2,6)--(4,0);
	\draw [blue, ->-=.5] (4,6)--(6,0);
         \draw (0,6) -- (6,6);
         \draw (6,0) -- (6,6);

\node at (-0.6,2)   {\footnotesize $\Ls{\infty}$};
\node at (3.5,-0.3)   {\footnotesize $\Ls{0}$};
\node at (2.6,2.75) {\footnotesize $\Ls{3}$};
\end{tikzpicture}
\end{center}
If $m_1 > m_0$, then $\Ls{m_1}$ and $\Ls{m_0}$ meet transversely in $(m_1 - m_0)$ points.  Lagrangian Floer theory gives algebraic structures to the free modules
\begin{equation}
\label{eq:000}
\CF(\Ls{m_0}, \Ls{m_1}) := \bigoplus_{x \in \Ls{m_0} \cap \Ls{m_1}} \Lambda,
\end{equation}
where $\Lambda$ is a suitable ring, about which more in \S\ref{intro:nov}.
``$\CF$'' stands for ``Floer cochains.''  The orientations of $\Ls{m_0}, \Ls{m_1}$ endow \eqref{eq:000} with a $\bZ/2$-grading (which can be lifted to a $\bZ$-grading by making some additional topological choices), and $\CF^*(\Ls{m_0},\Ls{m_1})$ supports a differential of degree $+1$ \S\ref{subsec:tsfccam}.  In the case at hand these gradings are concentrated in a single degree 0, and the differential is zero.

We will return to the differential in \S\ref{intro:degzeropiece} (and a little in \S\ref{intro:HFL0L0}), but to start we will be very interested in the ``triangle product'':
\begin{equation}
\label{eq:111}
\CF(\Ls{m_1},\Ls{m_2}) \times \CF(\Ls{m_0},\Ls{m_1}) \to \CF(\Ls{m_0},\Ls{m_2})
\end{equation}
whose value on $(x_1,x_2) \in (\Ls{m_1} \cap \Ls{m_2}) \times (\Ls{m_0} \cap \Ls{m_1})$ is the summation
\begin{equation}
\label{eq:112}
\sum_{y \in \Ls{m_0} \cap \Ls{m_2}} y \left(\sum_{u \in \cM(y,x_2,x_1)} \pm t^{\mathrm{area}(u)} \right)
\end{equation}
The inner sum is infinite: it is indexed by the set of rigid pseudoholomorphic triangles
\begin{equation}
u:\Delta^2 \to T
\end{equation}
with vertices at $x_1,x_2,y$ and one edge each along $\Ls{m_2},\Ls{m_1},\Ls{m_0}$.  The sign in $\pm t^{\area(u)}$ depends on $u$, and on the choice of a spin structure on each of the oriented $1$-manifolds $\Ls{m_i}$, see \S\ref{sorp}.  In the present case it is possible to make those choices so that the signs are identically $+1$.

\subsection{Dehn twist}
\label{intro:dehn}
There is a canonical identification of $\CF(\Ls{m},\Ls{n})$ with $\CF(\Ls{0},\Ls{n-m})$, induced by
\[
(x,y) \mapsto (x,y-mx)
\]
the $m$-fold Dehn twist around $\Ls{\infty}$.  An old suggestion of Seidel's \cite{Zaslow} is to use this identification to package the triangle products as a graded ring structure on the sum
\begin{equation}
\label{eq:121}
\Lambda \oplus \bigoplus_{m = 1}^{\infty} \CF(\Ls{0},\Ls{m})
\end{equation}
The multiplication on \eqref{eq:121} is associative and commutative for nontrivial reasons.  The associativity is a consequence of a very general Floer-theoretic argument that studies $1$-dimensional moduli spaces of pseudoholomorphic quadrilaterals \S\ref{ainfrelations}, and the commutativity is a consequence of a more particular observation about the Dehn twist \cite[\S 3]{Zaslow}.  

\subsection{The Floer cohomology of $(\Ls{0},\Ls{0})$}  
\label{intro:HFL0L0}
In \eqref{eq:121}, we have inserted the unit of the ring by hand (the summand $\Lambda$, which we place in degree zero), but this can also be motivated Floer-theoretically.  The definition of $\CF$ in \eqref{eq:000} is not the right one when $\Ls{m_0} = \Ls{m_1}$, or for any other pair that do not meet transversely.  But if $\phi = \{\phi^s\}_{s \in \bR}$ is a general Hamiltonian isotopy, then
\[
\CF(\phi^s \Ls{0},\Ls{0}) := \bigoplus_{x \in \phi^s \Ls{0} \cap \Ls{0}} x \cdot \Lambda
\]
together with its differential, gives a cochain complex whose cohomology groups do not depend on $\phi$.  These cohomology groups are $\bZ/2$-graded, the $\Lambda$ piece of \eqref{eq:121} is naturally identified with $\HF^0(\Ls{0},\Ls{0})$, cf. \S\ref{ex-idmap}.

\subsection{Novikov ring $\Lambda$ and Floer theory relative to a divisor}
\label{intro:nov}
There is some flexibility in choosing the ground ring $\Lambda$, but it should contain a ring of constants (let us use $C$ for this ring --- later on it will be the same as the $C$ of \S\ref{intro:FF}) a parameter $t$ and all necessary powers of it, and it should carry a topology in which all the sums \eqref{eq:121} converge.  The conventional choice is the Novikov ring \eqref{eq:LamZ0}, which we will denote by $\Lambda_C$:
\begin{equation}
\label{eq:LamC}
\Lambda_C =\left\{\sum_{i = 0}^{\infty} a_i t^{\lambda_i} \mid a_i \in C, \lambda_i \in \bR \text{ and } \lim_{i \to \infty} \lambda_i = \infty\right\}
\end{equation}

We can shrink those coefficients to $C[\![t]\!]$ by the following device of Seidel's, called Floer theory ``relative to a divisor.'' Rather than computing the area of the triangles $u$, we fix a basepoint $D \in T^2$ (in general, a symplectic divisor $D \subset T^2$) and use the cardinality of $u^{-1}(D)$ in place of symplectic area.  If $D$ is in the first quadrant and extremely close to $(0,0)$, then this cardinality is given by a simple formula which is independent of $D$ unless the triangle $u$ is extremely acute --- let us write $\area_{\bZ}(u)$ for this discretized notion of area.  With some additional care, by letting $D \to (0,0)$ (see \cite[\S 7.2.3]{LP} and \cite[Prop. 9.1]{LP}), we get a graded ring
\begin{equation}
\label{eq:131}
C[\![t]\!] \cdot 1 \oplus \bigoplus_{m = 1}^{\infty} \CF(\Ls{0},\Ls{m})
\end{equation}

\subsection{Theorem \cite{LP}}
\label{intro:LP-theorem}
The $C[\![t]\!]$-scheme
\begin{equation}
\Proj\left(C[\![t]\!] \cdot 1 \oplus \bigoplus_{m = 1}^{\infty} \CF(\Ls{0}, \Ls{m})\right)
\end{equation}
is isomorphic to $E_{\mathrm{Tate}} \times_{\bZ[\![t]\!]} C[\![t]\!]$; the Tate elliptic curve over $C[\![t]\!]$ whose Weierstrass equation is
\[
y^2 + xy = x^3 - b_2 x - b_3
\]
where $b_2,b_3 \in \bZ[\![t]\!]$ are the series
\begin{equation}
\label{eq:142}
b_2 = \sum_{n = 1}^{\infty} 5n^3 \frac{t^n}{1-t^n} \qquad b_3 = \sum_{n =1}^{\infty} \left(\frac{7n^5 + 5n^3}{12}\right) \frac{t^n}{1-t^n}
\end{equation}

\subsection{Theta series}
\label{intro:theta}
The relationship between \eqref{eq:131} and functions on the Tate elliptic curve is more transparent when those functions are described in terms of $\theta$-series.  Set
\begin{equation}
\label{eq:ttabs}
\theta_{m,k} := \sum_{i = -\infty}^{\infty} t^{m \frac{i (i-1)}{2}+ k i}\, z^{mi + k}; \qquad \theta_{m,k}^{\mathrm{abs}} := \sum_{i = -\infty}^{\infty} t^{(mi+k)^2/(2m)} z^{mi+k}
\end{equation}
The simplest of these series is the Jacobi function
\[
\theta_{1,0} = \sum_{i = -\infty}^{\infty} t^{\frac{i(i-1)}{2}} z^i = (1+z) \prod_{i = 1}^{\infty} \left[(1-t^i)(1+t^i z)(1+t^i z^{-1})\right]
\]
The others $\theta_{m,k}$ are obtained by a change of variables from $\theta_{1,0}$.  
These series are doubly infinite in $z$, but in formally expanding the product of two of them, the coefficient of $z^i t^j$ has only finitely many nonzero contributions.  The $C[\![t]\!]$-linear span of the $\theta_{m,k}$ (resp. the $\Lambda_C$-linear span the $\tabs_{m,k}$) is closed under multiplication and graded by $m$, and is isomorphic as a graded ring to \eqref{eq:131} (or \eqref{eq:121} in the absolute case).  The isomorphisms send $(k/m,0) \in \Ls{0} \cap \Ls{m}$ to $\theta_{m,k}$ or to $\tabs_{m,k}$.

\subsection{Fukaya category and homological mirror symmetry}
\label{intro:fuk}
The triangle product \eqref{eq:111} resembles a composition law in a category.  It is part of a sequence of structures on the $\CF(L,L')$,
\begin{equation}
\mu_n:\CF(L_{n-1},L_n) \times \cdots \times \CF(L_0,L_1) \to \CF(L_0,L_n)
\end{equation}
that are obtained by summing over the $(n+1)$-gons with sides along $L_0,L_1,\ldots,L_n$ \S\ref{two:polygon}.  When one takes extra care to treat sets of Lagrangians that are not transverse \S\ref{ex-idmap}, these structures define an $A_{\infty}$-category.  After passing to a triangulated envelope and splitting idempotents, we will call any of these $A_{\infty}$-structures a ``Fukaya category'' and denote it by $\Fuk(T)$ (in the absolute case) or $\Fuk(T,D)$ (in the relative case).

Kontsevich's homological mirror symmetry conjecture, specialized to $T$, asks for a quasi-equivalence between $\Fuk(T)$ and the derived category of coherent sheaves on an elliptic curve.  A version of this for complex elliptic curves was obtained in \cite{PZ}.  When $C = \bZ$, Theorem \ref{intro:LP-theorem}, together with a generation result for $\Fuk(T,D)$ \cite[\S 6.3]{LP}, constitute ``homological mirror symmetry over $\bZ$'':
\[
\Fuk(T,D) \cong D^b(\Coh(E_{\mathrm{Tate}}))
\]
The structure sheaf of $E_{\mathrm{Tate}}$ is the image of $\Ls{0}$ under this equivalence.

\subsection{$F$-fields}
\label{intro:ffield}

Let $\underline{\Lambda}$ be a local system of rings on $T$, so that at each point $x \in T$ we are given a ring $\underline{\Lambda}_x$, and along each path $\gamma$ from $x$ to $y$ we are given a ring isomorphism
\begin{equation}
\label{eq:171}
\nabla \gamma:\underline{\Lambda}_x \stackrel{\sim}{\to} \underline{\Lambda}_y
\end{equation}
Suppose that each ring $\underline{\Lambda}_x$ has the structures that we asked for in \S\ref{intro:nov}: it contains a ring of constants (we can denote it by $\underline{C}_x$), distinguished elements of the form $t^a$, and carries a topology.  The maps \eqref{eq:171} should be continuous, carry each $\underline{C}_x$ to $\underline{C}_y$, but leave the elements of the form $t^a$ alone ($\nabla \gamma(t^a) = t^a)$.  

We will develop a version of Floer theory ``with coefficients in $\underline{\Lambda}$.'' 
As in \S\ref{intro:nov}, we could work either relative to a divisor, or absolutely.  In the relative case we would take $\underline{\Lambda} := \underline{C}[\![t]\!]$, where $\underline{C}$ is a locally constant sheaf of rings.  In the absolute case, we would take $\underline{\Lambda} := \Lambda_{\underline{C}}$ \eqref{eq:LamC}.
\medskip

In the example of interest to us, the sheaf of rings is pulled back from $S^1$, along the projection map
\begin{equation}
\label{eq:intro:ffield}
\mathfrak{f}:T \to S^1
\end{equation}
Then $\underline{\Lambda}$ is determined by a ring $C$ and an automorphism (the monodromy around the base $S^1$) $\sigma$ of $C$.  It induces an automorphism of $C[\![t]\!]$ and of $\Lambda_C$ that fixes each $t^a$.  We are interested in the case when $C$ is perfect of characteristic $p$ and $\sigma$ is the $p$th root map.
\medskip

The map \eqref{eq:intro:ffield} (and the monodromy map $\sigma$) is just for book-keeping, but it also has an ``occult'' interpretation, in a way fitting in to the old analogy between number fields and three-manifolds, and between primes and knots.  The generator in the fundamental group of the base circle $S^1$ and the Frobenius in the absolute Galois group of $\bF_p$ act on $C$ in the same way: by $p$th powers.  See \cite{T} for a little bit more about this.  There is also a natural map from the set of closed points of $\FF_E(C)$ (for $E = \bF_p(\!(z)\!)$ or any other local field) to $S^1$, and in some sense this paper explores the idea that the SYZ mechanism for mirror symmetry could apply \S\ref{subsec:syz}.

\subsection{Lagrangian Floer theory --- coupled to $\underline{\Lambda}$}

Let us put
\begin{equation}
\label{eq:intro:172}
\CF(\Ls{m_0},\Ls{m_1};\underline{\Lambda}) := \bigoplus_{x \in \Ls{m_0} \cap \Ls{m_1}} \underline{\Lambda}_x
\end{equation}
Lagrangian Floer theory coupled to $\underline{\Lambda}$ concerns algebraic structures on \eqref{eq:intro:172}, for instance a triangle product
\begin{equation}
\label{eq:173}
\CF(\Ls{m_1},\Ls{m_2};\underline{\Lambda}) \times \CF(\Ls{m_0},\Ls{m_1};\underline{\Lambda}) \to \CF(\Ls{m_0},\Ls{m_2};\underline{\Lambda})
\end{equation}
In some sense \eqref{eq:intro:172} is another free $\Lambda$-module on the intersection points $\Ls{m_0} \cap \Ls{m_1}$, but with many different $\Lambda$-module structures.  The product \eqref{eq:173} is not $\Lambda$-bilinear with respect to any of them.  To define it we give its value on a pair
\begin{equation}
\label{eq:174}
(x_2 \cdot b,x_1 \cdot a) \in \CF(\Ls{m_1},\Ls{m_2};\underline{\Lambda}) \times \CF(\Ls{m_0},\Ls{m_1};\underline{\Lambda})
\end{equation}
for any $a \in \underline{\Lambda}_{x_1}$ and $b \in \underline{\Lambda}_{x_2}$, 
and extend bi-additively (or more precisely, $\Lambda_\bZ$-bilinearly).  The value on $(x_2 \cdot b,x_1 \cdot a)$ is
\begin{equation}
\label{eq:175}
\sum_{y \in \Ls{m_0} \cap \Ls{m_2}} y \sum_{u \in \cM(y,x_2,x_1)} \pm t^{\area(u) \text{ or } \area_{\bZ}(u)} \nabla \gamma_2(b \nabla \gamma_1(a \nabla \gamma_0(1)))
\end{equation}
where $\gamma_0:y \to x_1$, $\gamma_1:x_1 \to x_2$, and $\gamma_2:x_2 \to y$ are the three sides of the triangle $u$, appearing in counterclockwise order.
\begin{center}
\begin{tikzpicture}[scale =.7]
\tikzset{->-/.style={decoration={ markings,
        mark=at position #1 with {\arrow[scale=1.5,>=stealth]{>}}},postaction={decorate}}}
\node[below] at (3,0) {$\gamma_0$};
\node[above] at (2.5,2) {$\gamma_1$};
\node[left] at (1.5,1) {$\gamma_2$};
\node[above left] at (0,4) {$x_2 \cdot b$};
\node[below right] at (4,0) {$x_1 \cdot a$};
\node[below left] at (2,0) {$y$};
\draw[black, thick, ->-=.5] (2.1,0)--(4,0);
\draw[black, thick,->-=.5] (0,4)--(2-.05,0+.1);
\draw[black, thick,->-=.5] (4,0)--(0,4);

\end{tikzpicture}
\end{center}
The $\pm$ signs in the formula \eqref{eq:175} are the same as they are in \eqref{eq:112}; in particular one can arrange that they are identically $+1$.
 
When the monodromy of $\underline{\Lambda}$ around $\Ls{\infty}$ is trivial --- equivalently, when $\underline{\Lambda}$ is pulled back along \eqref{eq:intro:ffield} --- it is possible to package these triangle products into a graded ring structure 
\begin{equation}
\label{eq:intro:381}
\bigoplus_{m = 1}^{\infty} \CF(\Ls{0},\Ls{m};\underline{\Lambda})
\end{equation}

\subsection{Theorem}
\label{intro:thm1}
For each $a \in C$, and each pair of integers $m,k$ with $m > k \geq 0$, let $\theta_{m,k}[a]$ denote the formal series
\[
\theta_{m,k}[a] := \sum_{i = -\infty}^{\infty} t^{m\frac{i(i-1)}{2}+ki} z^{mi+k} \sigma^i(a)
\]
Let $\tabs_{m,k}[a]$ denote the formal series
\[
\tabs_{m,k}[a] := \sum_{i = -\infty}^{\infty} t^{\frac{1}{2m} (m i +k)^2} z^{mi+k} \sigma^i(a)
\]
Then the relative (resp. absolute) version of \eqref{eq:intro:381} is isomorphic, as a ring-without-unit, to the $\bZ[\![t]\!]$-linear span of the $\theta_{m,k}[a]$ (resp. to the $\Lambda_{\bZ}$-linear span of the $\tabs_{m,k}[a]$).

\subsection{Specializations of $t$}
Fix a commutative ring $C$ and an automorphism $\sigma$, cf. \eqref{eq:intro:ffield}.  The groups \eqref{eq:intro:172} are linear over $\Lambda_C^{\sigma}$ in the absolute case, and over $C^{\sigma}[\![t]\!]$ in the relative case.  We will discuss the specializations $t = 0$ and $t = 1$.  The case $t = 0$ we treat only briefly in \S\ref{subsec:t=0} --- the absolute case is not of interest, while the relative case is parallel to the ``large volume limit'' of $T$, and its mirror relationship with the nodal cubic curve at the ``large complex structure limit.''

The case $t = 1$ is more delicate.  There is a class of symplectic manifolds and Lagrangian submanifolds (for instance, monotone Lagrangians in a Fano manifold, or in a genus two surface) for which setting $t = 1$ is unproblematic, but the torus does not belong to this class.  And indeed the series \eqref{eq:112}, \eqref{eq:142} do not converge, in any archimedean or nonarchimedean ring, when $t = 1$.

An $F$-field can repair some (but only some) of the convergence.  Define
\begin{equation}
\label{eq:CFCu}
\CF(\Ls{m_0},\Ls{m_1};\underline{C}) = \bigoplus_{x \in \Ls{m_0} \cap \Ls{m_1}} \underline{C}_x
\end{equation}
Setting $t = 1$ in \eqref{eq:175} suggests, in a formal way, a map
\[
\CF(\Ls{m_1},\Ls{m_2};\underline{C}) \times \CF(\Ls{m_0},\Ls{m_1};\underline{C}) \dashrightarrow \CF(\Ls{m_0},\Ls{m_2};\underline{C}) 
\]
When $C$ is complete with respect to a norm $|\cdot|$, $\sigma$ is the $p$th root map, and $m_0 < m_1 < m_2$, this map has a nontrivial domain of convergence.  In particular it defines a multiplication on 
\[
\bigoplus_{m > 0} \CF(\Ls{0},\Ls{m};\underline{\mathfrak{m}})
\]
where $\mathfrak{m} = \{x \in C : |x| < 1\}$.

\subsection{The Fargues-Fontaine graded ring}
\label{intro:FFgr}
A perfect field of characteristic $p$, complete with respect to a norm $|\cdot|$, is since \cite{Scholze} known as a ``perfectoid field of characteristic $p$.''  Suppose $(C,|\cdot|)$ is such a field, and suppose furthermore that $C$ is algebraically closed.  Let $E = \mathbf{F}_p(\!(z)\!)$, and let $B \supset E$ be the set of bi-infinite formal series $\sum_{i \in \bZ} b_i z^i \in \prod_{i \in \bZ} C z^i$ with coefficients $b_i \in C$, and which obey
\begin{equation}
\label{eq:FFKS}
\forall r \in (0,1) \qquad |b_i| r^i \to 0 \text{ as } |i| \to \infty
\end{equation}
This ring $B$ coincides with what is called $B_{(0,1)}$ in \cite[Ex. 1.6.5]{FF}, and what is called $\cO_{\bR^1}((0,1))$ in \cite[Def. 21]{KS}.  

The automorphism $\varphi:B \to B$ given by
\begin{equation}
\label{eq:phiBB}
\varphi:\left( \sum c_i z^i\right) \mapsto \sum c_i^p z^i \qquad (\text{i.e. } \varphi(f(z)) = f(z^{1/p})^p)
\end{equation}
cuts $B$ into ``eigenspaces''
$
B^{\varphi = z^n} := \left\{f \in B \mid \varphi(f) = z^n f \right\}
$.  The Fargues-Fontaine curve attached to $(E,C)$ is
\begin{equation}
\label{eq:FF-curve}
\FF_E(C):=\Proj\left(\bigoplus_{n = 0}^{\infty} B^{\varphi = z^n}\right)
\end{equation}
\begin{theorem*}
\eqref{eq:432p} is isomorphic as a graded-ring-without-unit to the irrelevant ideal of \eqref{eq:FF-curve}, i.e.
\begin{equation}
\label{eq:feste}
\bigoplus_{n = 1}^{\infty} \CF(\Ls{0},\Ls{n};\underline{\mathfrak{m}}) \cong \bigoplus_{n = 1}^{\infty} B^{\varphi = z^n}
\end{equation}
\end{theorem*}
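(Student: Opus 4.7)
The plan is to use Theorem \ref{intro:thm1} to express each generator of the Floer side as a formal $\tabs$-series, and then show that setting $t = 1$ defines a ring isomorphism onto $\bigoplus_n B^{\varphi = z^n}$ once coefficients are restricted to $\underline{\mathfrak{m}}$. Throughout, $\sigma$ is taken to be the $p$-th root automorphism of $C$, consistent with the convergence regime of the ``Specializations of $t$'' paragraph. I would define a map $\Phi:\bigoplus_{m\geq 1}\CF(\Ls{0},\Ls{m};\underline{\mathfrak{m}}) \to B$ by sending $\tabs_{m,k}[a]$ with $a \in \mathfrak{m}$ to $\sum_i \sigma^i(a)\, z^{mi+k}$, i.e.\ the $t=1$ specialization of the series appearing in Theorem \ref{intro:thm1}.

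\emph{Convergence, eigenvector property, and bijection.} The first step is to check $\Phi(\tabs_{m,k}[a])\in B$: for $a \in \mathfrak{m}$ and $r \in (0,1)$, $|\sigma^i(a)|\, r^{mi+k} = |a|^{p^{-i}}\, r^{mi+k}$ tends to $0$ both as $i \to +\infty$ (since $r^{mi}\to 0$ while $|a|^{p^{-i}} \to 1$) and as $i \to -\infty$ (since $|a|^{p^{|i|}}$ super-exponentially beats $r^{-|i|m}$ exactly when $|a|<1$), so \eqref{eq:FFKS} holds. A direct reindexing then gives $\varphi \circ \Phi = z^m \cdot \Phi$, placing the image in $\bigoplus_m B^{\varphi = z^m}$. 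Conversely, any $f = \sum_n c_n z^n \in B^{\varphi = z^m}$ satisfies $c_{k + \ell m} = c_k^{p^{-\ell}}$ and is determined by $(c_0, \dots, c_{m-1})$; the same convergence analysis forces $c_k \in \mathfrak{m}$. This identifies $B^{\varphi = z^m}$ with $\mathfrak{m}^m$, matching the $m$-point intersection $\Ls{0} \cap \Ls{m}$ each equipped with a copy of $\mathfrak{m}$, so $\Phi$ is bijective in each degree.

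\emph{Multiplication.} For the ring structure I would invoke Theorem \ref{intro:thm1}, which identifies the Floer triangle product with the ordinary multiplication of $\tabs$-series in $\Lambda_{\bZ}[\![z^{\pm 1}]\!]$. Then $\Phi$ is automatically a ring homomorphism once the specialization at $t=1$ is justified on both sides: the right-hand side converges in the complete topological ring $B$ and lands in $B^{\varphi=z^{m_1+m_2}}$ because $\varphi$ is a ring endomorphism; the left-hand side converges on $\underline{\mathfrak{m}}$-coefficients by the assertion in the ``Specializations of $t$'' paragraph, which says precisely that the Floer triangle product admits a nontrivial $t=1$ specialization when coefficients lie in $\underline{\mathfrak{m}}$.

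\emph{Main obstacle.} The analytic heart of the argument is this final convergence claim at $t=1$: one must show that \eqref{eq:175} with $t$ set to $1$ not only produces matrix coefficients converging in $C$, but yields bi-infinite series satisfying the global decay condition \eqref{eq:FFKS} needed to land in $B$. The missing Novikov damping has to be traded against the super-exponential Frobenius damping $|\sigma^i(a)| = |a|^{p^{-i}} \to 0$ (as $i\to-\infty$) arising from the restriction $a \in \underline{\mathfrak{m}}$. Once this analytic step is secured, the identification of $\Phi$ as a ring homomorphism reduces to Theorem \ref{intro:thm1} and continuity of multiplication in $B$.
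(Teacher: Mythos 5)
Your proposal matches the paper's proof essentially step for step: define the map by sending $x_{m,k/m}\cdot a$ to the $t=1$ specialization of the $\theta$- or $\theta^{\mathrm{abs}}$-series, verify membership in $B^{\varphi=z^m}$ via the decay of $|\sigma^i(a)| = |a|^{p^{-i}}$ under \eqref{eq:FFKS}, invoke \S\ref{three:last} (Theorem \ref{intro:thm1}) for compatibility with $\mu_2$, and identify $B^{\varphi=z^m}$ with $\mathfrak{m}^{\oplus m}$ via the coefficients $b_0,\dots,b_{m-1}$. The ``main obstacle'' you flag — convergence of the triangle product at $t=1$ on $\underline{\mathfrak{m}}$-coefficients — is exactly what the paper disposes of in the subsection immediately preceding this theorem, so there is no gap.
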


\subsection{Annuli}
\label{intro:degzeropiece}
The degree zero piece $B^{\varphi = 1}$ of the Fargues-Fontaine graded ring \eqref{eq:FF-curve} is isomorphic to $E$ ($= \bF_p(\!(z)\!)$ in our case).  The theorem \eqref{eq:feste} does not explain how this part arises Floer-theoretically.  As in \S\ref{intro:HFL0L0}, it should come from the Floer cohomology of $\Ls{0}$ against itself --- a version of Floer cohomology with the $F$-field turned on --- but the usual rules for making sense of the nontransverse intersection $\Ls{0} \cap \Ls{0}$ have to be revisited when $t = 1$.

As we mentioned in \S\ref{intro:HFL0L0}, and review in \S\ref{subsec:cont}, the usual rules involve choosing a Hamiltonian isotopy $\{\phi^s\}_{s \in \bR}$ so that $\phi^s \Ls{0}$ and $\Ls{0}$ do meet transversely.  The problem that we encounter is that the quasi-isomorphism type of $\CF(\phi^s \Ls{0},\Ls{0};\underline{C})$, with its bigon differential, is no longer independent of $\phi$.  One still has natural maps between cochain complexes for different $\phi$, but they are not quasi-isomorphisms: the usual formula for the necessary cochain homotopies does not converge.

This is a well-known problem with ``convergent power series Floer cohomology.''  It is discussed in print in \cite[p.3]{Cho-Oh} and \cite[\S 4.2]{Auroux-annuli}, and perhaps elsewhere, but there is not much theory available for addressing it.  Still, we take the following point of view (which is only heuristic):
\begin{quote}
``Continuation'', i.e. the independence of the Hamiltonian displacement $\phi$, fails because there are pseudoholomorphic annuli in $T$ that have one side on $\phi^s \Ls{0}$ and the other side on $\Ls{0}$.
\end{quote}
For instance, Oszv\'ath and Szab\'o stick to ``admissible'' Heegaard diagrams to avoid problems with annuli like these \cite[\S 4.2.2]{OsSz}.  The problem they pose in Lagrangian Floer theory is closely related to the problem that closed gradient orbits pose in circle-valued Morse theory \cite{HutchingsLee}.  There is some speculation about incorporating them directly into Floer-theoretic invariants in \cite{Auroux-annuli}.

\subsection{Loud Floer cochains}
If $L$ and $L'$ are in different homology classes there are no annuli between them.  But there are infinitely many annuli between $\phi^s \Ls{0}$ and $\Ls{0}$ for any $\phi$.  If we fix an ``autonomous'' $\phi$, then we can get these under control by considering larger and larger $s$: for $s$ large, all of the annuli between $\phi^s \Ls{0}$ and $\Ls{0}$ have large area.  For instance:
\begin{center}
\begin{tikzpicture}[scale=0.7]
      \tikzset{->-/.style={decoration={ markings,
        mark=at position #1 with {\arrow[scale=2,>=stealth]{>}}},postaction={decorate}}}
         \draw (0,-1)--(6,-1)--(6,6-1)--(0,6-1)--(0,-1);
         \draw[blue, ->-=.5]  (0,0) -- (6,0);        
         \draw[domain=0:77,smooth,variable=\x,blue] plot ({\x/60},{-4.5*cos(\x)+6});
         \draw[domain=77:283,smooth,variable=\x,blue, ->-=.4] plot ({\x/60},{-4.5*cos(\x)});
         \draw[domain=283:360,smooth,variable=\x,blue] plot ({\x/60},{-4.5*cos(\x)+6});
\end{tikzpicture}
\qquad
\begin{tikzpicture}[scale=0.7]
      \tikzset{->-/.style={decoration={ markings,
        mark=at position #1 with {\arrow[scale=2,>=stealth]{>}}},postaction={decorate}}}
         \draw (0,-1)--(6,-1)--(6,6-1)--(0,6-1)--(0,-1);
         \draw[blue, ->-=.5]  (0,0) -- (6,0);
         \draw[domain=0:34.56,smooth,variable=\x,blue] plot ({\x/60},{-8.5*cos(\x)+12});
         \draw[domain=34.56:83.24,smooth,variable=\x,blue] plot ({\x/60},{-8.5*cos(\x)+6});
         \draw[domain=83.24:126.02,smooth,variable=\x,blue] plot ({\x/60},{-8.5*cos(\x)});
         \draw[domain=126.02:234,smooth,variable=\x,blue, ->-=.4] plot ({\x/60},{-8.5*cos(\x)-6});
          \draw[domain=234:276.76,smooth,variable=\x,blue] plot ({\x/60},{-8.5*cos(\x)});
          \draw[domain=276.76:325.44,smooth,variable=\x,blue] plot ({\x/60},{-8.5*cos(\x)+6});
          \draw[domain=325.44:360,smooth,variable=\x,blue] plot ({\x/60},{-8.5*cos(\x)+12});        
\end{tikzpicture}
\qquad
\begin{tikzpicture}[scale=0.7]
      \tikzset{->-/.style={decoration={ markings,
        mark=at position #1 with {\arrow[scale=2,>=stealth]{>}}},postaction={decorate}}}
         \draw (0,-1)--(6,-1)--(6,6-1)--(0,6-1)--(0,-1);
         \draw[blue, ->-=.4]  (0,0) -- (6,0);
         \draw[domain=0:56,smooth,variable=\x,blue] plot ({\x/60},{-12.5*cos(\x)+12});
         \draw[domain=56:85.5,smooth,variable=\x,blue] plot ({\x/60},{-12.5*cos(\x)+6});
         \draw[domain=85.5:113.5,smooth,variable=\x,blue] plot ({\x/60},{-12.5*cos(\x)});
         \draw[domain=113.5:152,smooth,variable=\x,blue] plot ({\x/60},{-12.5*cos(\x)-6});
         \draw[domain=152:208,smooth,variable=\x,blue] plot ({\x/60},{-12.5*cos(\x)-12});
          \draw[domain=208:246.5,smooth,variable=\x,blue] plot ({\x/60},{-12.5*cos(\x)-6});
          \draw[domain=246.5:274.5,smooth,variable=\x,blue] plot ({\x/60},{-12.5*cos(\x)});
          \draw[domain=274.5:304,smooth,variable=\x,blue] plot ({\x/60},{-12.5*cos(\x)+6});
          \draw[domain=304:360,smooth,variable=\x,blue] plot ({\x/60},{-12.5*cos(\x)+12});        
\end{tikzpicture}
\end{center}
Some of the constructions of \cite{HLee} have inspired us, here.
The complexes $\CF(\phi^s \Ls{0},\Ls{0};\underline{C})$ for different $s$ do not all have the same cohomology but there are natural cochain maps
\[
\CF(\phi^s \Ls{0},\Ls{0};\underline{C}) \to \CF(\phi^{s'} \Ls{0},\Ls{0};\underline{C})
\]
whenever $s' > s$.  We will study the colimit of this filtered diagram.  For large $s$ the picture of $\phi^s \Ls{0}$ is a sine wave with large amplitude (wrapped up around the torus), we call
\begin{equation}
\label{eq:intro:cfloud}
\CF_{\mathrm{loud}}(\Ls{0},\Ls{0};\underline{C}) := \varinjlim_s \CF(\phi^s \Ls{0},\Ls{0};\underline{C})
\end{equation}
the loud Floer cochains on $(\Ls{0},\Ls{0})$.  The name was suggested to us by Johnson-Freyd.  Now our point of view is the following:
\begin{quote}
By shouting infinitely loud, all of the annuli break, along with whatever problems they posed for noninvariance.
\end{quote}
We will not try to make this precise, but for a somewhat analogous precedent in the setting of periodic orbits, see what is called the ``Latour trick'' in \cite{Hutchings}.  The Latour trick breaks up the periodic orbits of a closed $1$-form by adding a large multiple of an exact $1$-form.  One could equivalently think of pushing the graph of the closed $1$-form, for a long time, by the Hamiltonian flow of a primitive for the exact form.  A large \emph{finite} multiple of the exact form suffices to break up all the periodic orbits, while in \eqref{eq:intro:cfloud} one has to pass to the limit, but maybe ``shouting loud'' is not a worse metaphor for one process than for the other.

We will show that the triangles with sides on $\Ls{0}, \phi^s \Ls{0}, \phi^{s+s'} \Ls{0}$ induce a multiplication on $\CF_{\mathrm{loud}}(\Ls{0},\Ls{0};\underline{C})$ and on $\HF_{\mathrm{loud}}(\Ls{0},\Ls{0};\underline{C})$.  Our construction of this multiplication is quite crude: a better analysis would follow the construction of an $A_{\infty}$-structure on wrapped Floer cochains \cite{Abouzaid-Seidel}, which we expect to apply here and give a richer structure.  But our computations give an isomorphism of rings
\begin{equation}
\label{eq:introCzz}
\HF^0_{\mathrm{loud}}(\Ls{0},\Ls{0};\underline{C}) \cong C^{\sigma}[z,z^{-1}]
\end{equation}
When $\sigma$ is the $p$th root map, this is the Laurent polynomial ring $\bF_p[z,z^{-1}]$, a dense subring of $\bF_p(\!(z)\!)$.
\medskip

One can similarly define $\HF_{\mathrm{loud}}(\Ls{m},\Ls{m};\underline{C})$, and a ring structure on it, for any $m \in \bQ$.  One gets the same answer \eqref{eq:introCzz} when $m$ is an integer.  For $m = d/r$, we expect but will not prove that $\HF^0_{\mathrm{loud}}$ is a dense subring of an $r^2$-dimensional division algebra over $\bF_p(\!(z)\!)$ whose invariant is $m+\bZ \in \bQ/\bZ$.   The indecomposable vector bundles on the Fargues-Fontaine curve are classified by $\bQ$ and those division algebras arise as their endomorphism rings \cite[Thm. 8.2.10]{FF}.

\subsection{Acknowledgments} We have benefitted from conversations, suggestions and correspondence from D. Auroux, B. Bhatt, A. Caraiani, K. Fukaya, M. Kim, K. Madapusi-Pera, B. Poonen and P. Seidel.
YL is partially supported by the Royal Society URF\textbackslash R\textbackslash180024. DT was supported by NSF-DMS 1811971.

\section{Some Floer-theoretic background}
\label{two}

In this section we review some of Floer theory, making what simplifications are possible when the target manifold is a 2d torus $T$.

\subsection{$J$-holomorphic polygons}
Let $D \subset \bC$ be the closed unit disk in the complex plane.  We denote by $D^{\circ}$ the open unit disk and $\partial D = D - D^{\circ}$ the boundary of $D$.  Let $\mathbf{z} = (z_0,\ldots,z_n)$ denote an ordered $(n+1)$-tuple of points in $\partial D$.  We require that the points of $\mathbf{z}$ are pairwise distinct and that the counterclockwise arc subtending $z_{i-1}$ and $z_i$ (or $z_n$ and $z_0$) does not contain any other point of $\mathbf{z}$.

Let $L_0,L_1,\ldots,L_n$ be an $(n+1)$-tuple of one-dimensional submanifolds of $T$, and let $(x_0,\ldots,x_n)$ be a tuple of points in $T$ with
\[
x_0 \in L_0 \cap L_n, \qquad x_1 \in L_1 \cap L_0, \qquad \ldots,\qquad x_n \in L_n \cap L_{n-1}
\]
We write $\cW(x_0,\ldots,x_n)$ for the set of pairs $(\mathbf{z},u)$ where $u:D \to T$ is a continuous map, smooth away from $\mathbf{z}$, that carries $z_i$ to $x_i$ and that maps the counterclockwise arc between $z_i$ and $z_{i+1}$ into $L_i$.  It carries a topology in which a sequence $(\mathbf{z}_i,u_i)$ converges $(\mathbf{z},u)$ if $\mathbf{z}_i \to \mathbf{z}$ in $(\partial D)^{\times (n+1)}$ and $u_i \to u$ in a suitable Sobolev space. 
\medskip

Fix an almost complex structure $J$ on $T$.  
A polygon $(\mathbf{z},u) \in \cW(x_0,\ldots,x_n)$ is called $J$-holomorphic if the differential of $u$ is $\bC$-linear on each tangent space of the interior $D^{\circ}$.  Write $\widetilde{\cM}(x_0,\ldots,x_n)\subset \cW(x_0,\ldots,x_n)$ for the subspace of $J$-holomorphic polygons.  The group $\mathrm{PSL}_2(\bR)$ of biholomorphisms of $D^{\circ}$ acts on $\widetilde{\cM}$ by reparametrization, and we denote the quotient by $\cM(x_0,\ldots,x_n)$.

\subsection{Transversely cut criteria}
\label{tcc}
Each connected component of $\cM(x_0,\ldots,x_n)$ is labeled by a nonnegative integer called the analytic index of the component (or of any map in the component).  In case the conditions that cut $\widetilde{\cM}$ out of $\cW$ are transverse in a sense that we will not review here, then each component is a topological manifold and the analytic index coincides with the dimension of this component.  A formula for this dimension is given below \eqref{eq:dim-mas}.  When the ``transversely cut'' condition is satisfied, we call the components of dimension zero rigid polygons.  

The ``transversely cut'' condition is satisfied whenever the $(L_0,\ldots,L_n)$ are in general position --- that is, whenever the $L_i$ are pairwise transverse and $L_i \cap L_j \cap L_k$ is empty.  On $T$ or another surface, this triple intersection condition can be relaxed \cite[Lem. 13.2]{Seidel-book}, for instance $\cM$ is transversely cut in a neighborhood of $u$ as soon as $u$ is not constant.  Even some constant maps $u$ are transversely cut, for instance:
If $(L_0,L_1,L_2)$ are pairwise transverse, then at any triple intersection point $x \in L_0 \cap L_1 \cap L_2$, the tangent lines $(T_x L_0,T_x L_1,T_x L_2)$ come in either clockwise or counterclockwise order.  A constant map $D \to L_0 \cap L_1 \cap L_2$ contributes to $\cM(x,x,x)$ if and only if they come in clockwise order
\begin{center}
\begin{tikzpicture}[scale = .6]
\tikzset{->-/.style={decoration={ markings,
        mark=at position #1 with {\arrow[scale=1.5,>=stealth]{>}}},postaction={decorate}}}
\draw[blue, thick] (-1,-.75)--(1,.75);
\draw[blue,thick] (1,-.75)--(-1,.75);
\draw[blue,thick] (0,-1)--(0,1);
\node[right] at (1,.75) {$L_2$};
\node[above] at (0,1) {$L_1$};
\node[left] at (-1,.75) {$L_0$};
\end{tikzpicture}
\end{center}
An example of a non-transversely cut quadrilateral in $T$ (necessarily constant) is analyzed in  \cite[Thm. 8]{LPshort}.  We will encounter some non-transversely cut triangles in \S\ref{subsec:penultimate}.  

\subsection{Maslov index of an intersection point}
\label{subsec:mas}
Let $(L,L')$ be an ordered pair of connected one-dimensional submanifolds of $T$, and fix an orientation of both $L$ and $L'$.  Suppose that $L$ and $L'$ meet transversely at the point $x$, then we define $\mas(x) \in \bZ/2$ by the rule indicated in the diagram
\begin{center}
\begin{tikzpicture}[scale = .6]
\tikzset{->-/.style={decoration={ markings,
        mark=at position #1 with {\arrow[scale=1.5,>=stealth]{>}}},postaction={decorate}}}
\draw[blue, thick, ->-=.75] (-1,-1)--(1,1);
\draw[blue,thick,->-=.75] (1,-1)--(-1,1);
\node[above] at (1,1) {$L'$};
\node[above] at (-1,1) {$L$};
\node[below] at (0,-1) {$\mas = 0$};
\end{tikzpicture}
\qquad
\begin{tikzpicture}[scale = .6]
\tikzset{->-/.style={decoration={ markings,
        mark=at position #1 with {\arrow[scale=1.5,>=stealth]{>}}},postaction={decorate}}}
\draw[blue, thick, ->-=.75] (-1,-1)--(1,1);
\draw[blue,thick,->-=.75] (1,-1)--(-1,1);
\node[above] at (1,1) {$L$};
\node[above] at (-1,1) {$L'$};
\node[below] at (0,-1) {$\mas = 1$};
\end{tikzpicture}
\end{center}
 If $L$ and $L'$ are not homologous to zero, one may lift the Maslov index to a $\bZ$-valued invariant by equipping $L$ and $L'$ (and $T$) with gradings, see \cite[\S 6]{LP} --- let us denote this $\bZ$-valued Maslov index by $\mas_{\bZ}(x)$.  A formula for the dimension near $u \in \cM(x_0,\ldots,x_n)$ is
\begin{equation}
\label{eq:dim-mas}
\mas_\bZ(x_0) - \mas_\bZ(x_1) - \cdots - \mas_\bZ(x_n)
\end{equation}
 
\subsection{Sign of a rigid polygon}
\label{sorp}
By making some additional choices one may attach a sign to each rigid polygon with boundary on $(L_0,\ldots,L_n)$ --- in other words one may define a map
\begin{equation}
\label{eq:sign-spin}
\cM(x_0,\ldots,x_n) \to \{1,-1\}
\end{equation}
We recall the recipe for \eqref{eq:sign-spin} given in \cite[\S 7]{Seidel} --- it depends on the choice of orientation for each $L_i$, and on the additional data of a basepoint $\star_i \in L_i$ in each $L_i$.  One requires that $\star_i \notin L_j$ for any $j \neq i$.  The point $\star_i$ endows $L_i$ with a nontrivial spin structure (also known as bounding or Neveu-Schwarz spin structure) which is trivialized away from $\star_i$.

If $u\vert_{\partial D}:\partial D \to \cup_{i = 0}^n L_i$ preserves the counter-clockwise orientation of $D$, the sign is $+1$ or $-1$ according to whether one encounters an even or odd number of stars going around $\partial D$ --- i.e. it is $(-1)^{\#u^{-1}\{\star_0,\ldots,\star_n\}}$.  Changing the orientation of $L_0$ does not change this sign, changing the orientation of $L_n$ changes the signs by $(-1)^{\mas(x_0) + \mas(x_n)}$, while changing the orientation of any of the other $L_i$ changes the sign by $(-1)^{\mas(x_i)}$.

For short, we will sometimes write
\begin{equation}
\label{eq:abs-mas}
(-1)^{|x|}:=(-1)^{\mas(x)}
\end{equation}

\subsection{Floer cochain complexes}
\label{subsec:tsfccam}
Let $t$ be a formal variable, and let $\bZ[t^{\bR_{\geq 0}}]$ denote the semigroup algebra of $\bR_{\geq 0}$, i.e. the group of finite $\bZ$-linear combinations of symbols of the form $t^a$, where $a \in \bR_{\geq 0}$, with the multiplication $t^a t^b = t^{a+b}$.  Let $\Lambda$ be a $\bZ[t^{\bR_{\geq 0}}]$-algebra.  In a moment we will take $\Lambda$ to be the Novikov completion of $\bZ[t^{\bR_{\geq 0}}]$ but the differential in $\CF(L,L')$ is given by a finite sum, so that in this section it might as well be $\bZ[t^{\bR_{\geq 0}}]$ itself.

When $L$ and $L'$ intersect transversely, then we let
\begin{equation}
\label{eq:CFLLp}
\CF(L,L') := \bigoplus_{x \in L \cap L'} \Lambda
\end{equation}
The choice of orientation for $L$ and $L'$ endows this group with a $\bZ/2$-grading
\[
\CF = \CF^0 \oplus \CF^1, \qquad \text{where }
\CF^i(L,L') := \bigoplus_{x | \mas(x) = i} \Lambda
\]
Further equipping $L$ and $L'$ with stars \S\ref{sorp} gives us the bigon differential:
\begin{equation}
\label{eq:mu1}
\mu_1:\CF^i(L,L') \to \CF^{i+1}(L,L'): x \mapsto \sum_{y \mid \mas(y) = i+1} y \left(\sum_{u \in \cM(x,y)} \pm t^{\area(u)}\right) 
\end{equation}
where the sign is given in \S\ref{sorp}, and $\mathrm{area}(u):=\int_D u^* (dx\, dy)$.  The inner sum is finite.  A general argument using non-rigid bigons shows that $\mu_1 \mu_1 = 0$ --- this is a case of the $A_{\infty}$-relations \S\ref{ainfrelations}.  In $T$, this can be proved more simply by lifting the grading from $\bZ/2$ to $\bZ$: the $\bZ$-grading is always concentrated in only two degrees.
\medskip

We have just described the ``absolute'' Floer cochain complex.  After fixing a point $D \in T$, not on $L$ or $L'$, we also have a ``relative to $D$''  complex in which $\Lambda$ in \eqref{eq:CFLLp} can be shrunk to $\bZ[t]$ (or another $\bZ[t]$-algebra), and the expression $t^{\mathrm{area}(u)}$ in is replaced by $t^{\#u^{-1}(D)}$.

\subsection{Example --- special Lagrangians}
\label{ex:sl}
We will call a circle $L \subset T$ a ``special Lagrangian'' if it is the image under $\bR^2 \to T$ of a straight line.  If that straight line has the form $y+mx = b$ then we will call $m$ the slope of the special Lagrangian, otherwise we say $L$ has slope $\infty$; thus the possible slopes are $m \in \bQ \cup \{\infty\}$.  If $L$ is special with finite slope, let us call the orientation under the parametrization $x \mapsto (x,b-mx)$ the ``default orientation.''

If $L \neq L'$ are two special Lagrangians, of finite slopes $m$ and $m'$, then they meet transversely in a set of cardinality $|nd' - n'd|$, if $m = n/d$ and $m' = n'/d'$.  All the intersection points are in a single Maslov degree; with the default orientations, these degrees are
\[
\CF(L,L') = \begin{cases}
\CF^0(L,L') & \text{if $m' > m$} \\
\CF^1(L,L') & \text{if $m' < m$}
\end{cases}
\]
There are no bigons and the differential \eqref{eq:mu1} is zero.  

\subsection{Polygon maps}
\label{two:polygon}
Suppose that $(L_0,\ldots,L_n)$ are in sufficiently general position that all the $\cM(x_0,\ldots,x_n)$ are transversely cut.  The $(n+1)$-gon map is a multilinear map
\begin{equation}
\label{eq:mun}
\mu_n:\CF^{i_n}(L_{n-1},L_n) \times \cdots \times \CF^{i_1}(L_0,L_1) 
\to \CF^{i_1+\cdots+i_n + 2 - n}(L_0,L_n)
\end{equation}
which carries $(x_n,x_{n-1},\ldots,x_1)$ to
\begin{equation}
\label{eq:mun-formula}
\sum_{y} y \left(\sum_{u \in \cM(y,x_1,x_2,\ldots,x_n)} \pm t^{\area(u)}\right)
\end{equation}
It is not defined until the $L_i$ are equipped with orientations and stars.  Furthermore, the inner sum in \eqref{eq:mun-formula} is usually infinite, so $\Lambda$ should carry a topology in which it converges.  The standard choice for $\Lambda$ is one of $\Lambda^0$ or $\Lambda^0[t^{-1}]$, where $\Lambda^0$ is the Novikov ring
\begin{equation}
\label{eq:LamZ0}
\Lambda^0 = \Lambda_{\bZ}^0 =\left\{\sum_{i = 0}^{\infty} a_i t^{\lambda_i} \mid a_i \in \bZ, \lambda_i \in \bR_{\geq 0} \text{ and } \lim_{i \to \infty} \lambda_i = \infty\right\}
\end{equation}
The fact that $\Lambda^0$ can be taken to have $\bZ$-coefficients is a reflection of the fact that the moduli spaces $\cM$ are not orbifolds --- this holds for $T$ and more generally for semipositive symplectic manifolds.
\medskip

In the relative setting, as long as $D$ does not lie on any $L_i$, we replace $t^\area(u)$ with $t^{\#u^{-1}(D)}$, and \eqref{eq:mun} is multilinear over $\bZ[\![t]\!]$.

\subsection{Example --- some triangle maps}
\label{two:tri-maps}
Suppose $L_0,\ldots,L_k$ are special Lagrangians of slopes
\begin{equation}
\label{eq:increasing-slope}
m_0 < m_1 < \cdots < m_k < \infty
\end{equation}
If $k \neq 2$, then the set of rigid $(k+1)$-gons with boundary on $L_0,\ldots,L_k$ is empty, and the maps
\[
\mu_k:\CF(L_{k-1},L_k) \times \cdots \times \CF(L_0,L_1) \to \CF(L_0,L_k)
\]
are zero --- this is a consequence of \S\ref{eq:dim-mas}.  In other words when \eqref{eq:increasing-slope} is satisfied $\mu_2$ is the only interesting polygon map.  In this section we explain how to compute $\mu_2$ in detail for Lagrangians of the form $\Ls{m_0}, \Ls{m_1},\Ls{m_2}$ \S\ref{intro:13}.  These are the maps that can be packed into a product structure on $\bigoplus \CF(\Ls{0},\Ls{m})$ \S\ref{intro:dehn},  and our notation is adapted to describing this product structure.  
\medskip

One consequence of the vanishing of the $\mu_k$ for $k \neq 2$ is that this product is strictly associative \S\ref{ainfrelations}, and we can describe the product in terms of the theta functions.  If \eqref{eq:increasing-slope} is not satisfied then the maps $\mu_k$ do not all vanish for $k \geq 3$ --- they can written in terms of Hecke's indefinite theta series \cite{Polishchuk}.
\medskip

If $m_1 < m_2$, there are $m_2 - m_1$ intersection points between $\Ls{m_1}$ and $\Ls{m_2}$, each contributing a basis element to $\CF(\Ls{m_1},\Ls{m_2})$.  Let us index those intersection points in the following way.
\begin{equation}
\tau^{m_1}(x_{m_2 - m_1,\kappa}) := (\kappa,-m_2\kappa)
\end{equation}
where $\kappa \in \{0,\frac{1}{m_2 - m_1},\ldots, \frac{m_2 - m_1 - 1}{m_2 - m_1}\}$ and $\tau$ denotes the Dehn twist map
\[
\tau:(x,y) \mapsto (x,y-x)
\]
Fix an irrational number $\varepsilon$ and equip each $\Ls{m}$ with a star \S\ref{sorp}
\begin{equation}
\label{eq:starm}
\star_{(m)} = \star_{(m),\varepsilon} := (\varepsilon,-m\varepsilon)
\end{equation}
The Dehn twist carries $\Ls{m}$ to $\Ls{m+1}$ and preserves the stars.  As $\varepsilon$ is irrational, the stars avoid the intersection points $\Ls{m_1} \cap \Ls{m_2}$.  We will compute the triangle maps
\[
\CF(\Ls{m_1},\Ls{m_1+m_2}) \times \CF(\Ls{0},\Ls{(m_1)}) \to \CF(\Ls{0},\Ls{m_1+m_2})
\]
by computing $\mu_2(\tau^{m_1} x_{m_2,\kappa_2}, x_{m_1,\kappa_1})$.  For example
\begin{equation}
\mu_2(\tau x_{1,0},x_{1,0}) = \begin{cases}
x_{2,0}\Big( \sum_{i \in \bZ} t^{i^2}\Big) + x_{2,\frac{1}{2}}\Big(\sum_{i \in \bZ} t^{(i+\frac{1}{2})^2}\Big) & \text{(absolute setting)} \\
x_{2,0}\Big( \sum_{i \in \bZ} t^{i^2}\Big) + x_{2,\frac{1}{2}}\Big(\sum_{i \in \bZ} t^{i(i+1)}\Big) & \text{(relative setting)}
\end{cases}
\end{equation}

\begin{theorem*}
Let
\begin{equation}
\label{eq:323}
E(\kappa_1,\kappa_2) = E_{m_1,m_2}(\kappa_1,\kappa_2) := \frac{m_1 \kappa_1 + m_2 \kappa_2}{m_1 + m_2}
\end{equation}
(which carries $\frac{1}{m_1} \bZ \times \frac{1}{m_2} \bZ$ into $\frac{1}{m_1+m_2} \bZ$). Then in the absolute setting $\mu_2(\tau^{m_1} x_{m_2,\kappa_2}, x_{m_1,\kappa_1})$ is given by
\begin{equation}
\label{eq:abs-mu2}
\mu_2(\tau^{m_1} x_{m_2,\kappa_2}, x_{m_1,\kappa_1}) 
= 
\sum_{\ell \in \bZ} x_{m_1 + m_2,E(\kappa_1,\kappa_2 + \ell)} t^{(\ell+\kappa_2 - \kappa_1)^2/(2(\frac{1}{m_1} + \frac{1}{m_2}))}
\end{equation}
Let the functions $\phi(s)$ and $\lambda(u,v) = \lambda_{m_1,m_2}(u,v)$ be given by (cf. \cite[p. 83]{LP})
\begin{equation}
\label{eq:Euv-lamuv}
\phi(s):= \lfloor s \rfloor s - \frac{1}{2} \lfloor s \rfloor \lfloor s+1 \rfloor; \qquad  \lambda(u,v) := m_1 \phi(u)  + m_2 \phi(v) - (m_1 + m_2) \phi(E(u,v))
\end{equation}
Then in the relative setting $\mu_2(\tau^{m_1} x_{m_2,\kappa_2}, x_{m_1,\kappa_1})$ is given by
\begin{equation}
\label{eq:rel-mu2}
\mu_2(\tau^{m_1} x_{m_2,.\kappa_2},x_{m_1,\kappa_1}) = \sum_{\ell \in \bZ} x_{m_1 + m_2,E(\kappa_1,\kappa_2 + \ell)} t^{\lambda(\kappa_1,\kappa_2+\ell)}
\end{equation}
where we understand $x_{m_1+m_2,E(\kappa_1,\kappa_2+\ell)} := x_{m_1 + m_2,\kappa}$ if $E(\kappa_1,\kappa_2+\ell) = \kappa$ modulo $1$.
\end{theorem*}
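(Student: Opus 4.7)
The plan is to parametrize the moduli $\cM(y, x_{m_1,\kappa_1}, \tau^{m_1}x_{m_2,\kappa_2})$ explicitly by lifting to the universal cover $\bR^2 \to T$. Because the ambient almost complex structure is the standard integrable one and the Lagrangians lift to straight lines in $\bR^2$, every $J$-holomorphic disk in the moduli is forced to be an honest affine triangle. So each element is uniquely determined by a triple of lifts of $(\Ls{0},\Ls{m_1},\Ls{m_1+m_2})$ compatible with the prescribed vertex data, modulo the diagonal $\bZ^2$-action on the universal cover.

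I would first normalize by fixing the lift of $\Ls{0}$ to be $\{y=0\}$ and the lift of $\Ls{m_1}$ to be $\{y = -m_1(x-\kappa_1)\}$, so that the chosen lift of $x_1 = x_{m_1,\kappa_1}$ is the point $(\kappa_1,0)$. Using $m_1\kappa_1,\,m_2\kappa_2 \in \bZ$, the set of lifts of $x_2 = \tau^{m_1}x_{m_2,\kappa_2}$ lying on this lift of $\Ls{m_1}$ is the $\bZ$-indexed family $\{(\kappa_2+\ell,\,m_1(\kappa_1-\kappa_2-\ell))\}_{\ell\in\bZ}$, which is the source of the summation over $\ell$. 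Each $\ell$ determines a unique lift of $\Ls{m_1+m_2}$ through that copy of $x_2$, and a shoelace computation places its intersection with the $x$-axis at $x = E(\kappa_1,\kappa_2+\ell)$. The numerator $m_1\kappa_1 + m_2(\kappa_2+\ell)$ is automatically an integer, so this $x$-coordinate lies in $\frac{1}{m_1+m_2}\bZ$ and indeed represents $x_{m_1+m_2,\,E(\kappa_1,\kappa_2+\ell)}$ in the torus after reduction mod $1$. The signed area of the resulting planar triangle works out to $\frac{m_1 m_2(\kappa_1-\kappa_2-\ell)^2}{2(m_1+m_2)} = \frac{(\ell+\kappa_2-\kappa_1)^2}{2(1/m_1+1/m_2)}$, giving the exponent of $t$ in \eqref{eq:abs-mu2}.

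The next step is to verify that every rigid triangle arises from exactly one $\ell$ and that each contributes with sign $+1$. Distinct $\ell$'s give inequivalent triangles because the diagonal $\bZ^2$-action on triples of lifts is free away from degenerate configurations, and the signed area formula above being a perfect square shows that the boundary orientation is counterclockwise regardless of the sign of $\ell + \kappa_2 - \kappa_1$. The one degenerate case $\ell+\kappa_2-\kappa_1 = 0$ collapses the three lifted lines to the single point $(\kappa_1,0)$, which is then automatically a triple intersection in $\Ls{0} \cap \Ls{m_1} \cap \Ls{m_1+m_2} \subset T$; the transversely-cut criterion of \S\ref{tcc} admits the corresponding constant map because the tangent lines of $(\Ls{0},\Ls{m_1},\Ls{m_1+m_2})$ at such a triple intersection appear in clockwise cyclic order, giving the contribution $+t^0$. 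For the non-degenerate triangles, the sign rule of \S\ref{sorp} evaluates each to $+1$: with the stars $\star_{(m)} = (\varepsilon,-m\varepsilon)$ chosen for $\varepsilon>0$ small and irrational, every star lies outside each triangle's boundary, so the counts $\#u^{-1}\{\star\}$ all vanish.

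Finally, the relative formula \eqref{eq:rel-mu2} follows from the same enumeration by $\ell$, replacing $t^{\area(u)}$ by $t^{\#u^{-1}(D)}$ for $D$ approaching the origin through the first quadrant. The quantity $\#u^{-1}(D)$ becomes a count of integer translates of $D$ sitting inside the planar triangle with vertices $(E,0),(\kappa_1,0),(\kappa_2+\ell,m_1(\kappa_1-\kappa_2-\ell))$. This lattice-point count unpacks into $\lambda(\kappa_1,\kappa_2+\ell)$ via an inclusion-exclusion along the three edges: $\phi(s) = \lfloor s\rfloor s - \frac{1}{2}\lfloor s\rfloor\lfloor s+1\rfloor$ records the contribution of lattice points under a linear segment of the slope corresponding to each side, and the alternating combination $m_1\phi(u) + m_2\phi(v) - (m_1+m_2)\phi(E(u,v))$ assembles these to give the interior count. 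This combinatorial identity is already worked out in \cite[p.~83]{LP}, so what remains is careful bookkeeping of boundary cases (lattice points coinciding with a lifted Lagrangian, or the degenerate triangle). The main obstacle, insofar as there is one, is organizing the sign conventions and the degenerate-triangle contribution uniformly across both the absolute and relative settings; otherwise the argument is a direct enumeration.
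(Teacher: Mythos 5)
Your overall plan --- enumerate the rigid triangles by lifting to the universal cover, index them by an integer $\ell$, locate the third vertex at $x$-coordinate $E(\kappa_1,\kappa_2+\ell)$, and then compute the area, respectively the lattice area via the identity on p.~83 of \cite{LP} --- is essentially the paper's proof. The identification of the three lifted vertices, the use of $E$ as a weighted average, and the reduction of the relative exponent to $\lambda$ all match.

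However, your sign analysis has a genuine gap. You assert that, with the stars $\star_{(m)} = (\varepsilon,-m\varepsilon)$ for $\varepsilon>0$ small and irrational, ``every star lies outside each triangle's boundary, so the counts $\#u^{-1}\{\star\}$ all vanish.'' This is false as soon as $|\ell + \kappa_2 - \kappa_1| > 1$. The side along $\Ls{m_1}$ of the $\ell$th lifted triangle has $x$-coordinate running over the full interval between $\kappa_1$ and $\kappa_2+\ell$, so it passes over a lift of $\star_{(m_1)}$ once for each integer $i$ with $\varepsilon+i$ strictly between $\kappa_1$ and $\kappa_2+\ell$; for large $|\ell|$ that is many crossings, not zero. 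The correct statement --- and the one the paper proves --- is not that $\#u^{-1}\{\star\}$ vanishes but that it is \emph{even}: since $E(\kappa_1,\kappa_2+\ell)$ lies strictly between $\kappa_1$ and $\kappa_2+\ell$, the two remaining sides along $\Ls{0}$ and $\Ls{m_1+m_2}$ together sweep the same $x$-interval, so each such $i$ contributes exactly one additional star crossing on one of those two sides (never both, and never neither, because $\varepsilon$ is irrational while $E$ is rational). Pairing these crossings shows the count is even and the sign is $+1$. So your conclusion is right for the wrong reason, and as written the sign step does not go through.

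Two secondary remarks. First, your discussion of the degenerate case $\ell+\kappa_2-\kappa_1 = 0$ (the constant triangle at a triple intersection, transversely cut because the tangent lines $(T\Ls{0}, T\Ls{m_1}, T\Ls{m_1+m_2})$ come in clockwise cyclic order) is correct and is more explicit than the paper, which simply absorbs it into the formula as the $t^0$ term. Second, you should note that for the constant triangle the star-parity argument is vacuous (zero crossings), so the sign there is $+1$ as well, consistent with the formula.
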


\begin{proof}
The index $\ell$ in either sum \eqref{eq:abs-mu2}, \eqref{eq:rel-mu2} determines a triangle, two of whose vertices are at $x_{m_1,\kappa_1}$ and $\tau^{m_1} x_{m_2,\kappa_2}$.  In the universal cover, the coordinates of all three vertices are
\[
(\kappa_1,0), \qquad (E(\kappa_1,\kappa_2+\ell),0), \qquad (\kappa_2+\ell,-m_1(\kappa_2 + \ell - \kappa_1))
\]
as in the diagram
\begin{center}
\begin{tikzpicture} [scale=1.1]
      \tikzset{->-/.style={decoration={ markings,
        mark=at position #1 with {\arrow[scale=2,>=stealth]{>}}},postaction={decorate}}}
        
        \node at (-2,6.3) {\footnotesize $\Ls{0}$};
        \node at (2,4) {\footnotesize $\Ls{m_1+m_2}$};
        \node at (-2,4) {\footnotesize $\Ls{m_1}$};

        \draw[thick, black] (-4,6) -- (-.1,6);
	\node at (-5.3,6) {\footnotesize $x_{m_1,\kappa_1}= (\kappa_1, 0)$};
        \draw[thick, black] (0+.05,6-.1) -- (2,2);
         \node at (4.6,1.8) {\footnotesize 
	 $x_{m_2,\kappa_2}= \left( \kappa_2+\ell , -m_1(\kappa_2+\ell-\kappa_1) \right)$ };     
        \draw[thick, black] (-4,6) -- (2,2);
	\node at (2.7,6) {\footnotesize $(E(\kappa_1,\kappa_2+\ell), 0) = x_{m_1+m_2, E(\kappa_1,\kappa_2+l)}$};
\end{tikzpicture}
\end{center}
On any of these triangles there are an even number of stars \eqref{eq:starm}: for each $i \in \bZ$ with $\kappa_1 < \varepsilon + i < \kappa_2 + \ell$ there are exactly two stars whose $x$-coordinate (in the universal cover) is $\varepsilon+i$, one along $\Ls{m_1}$ and the other along either $\Ls{0}$ or $\Ls{m_1+m_2}$.  Thus every summand in the triangle has sign $+1$.

The exponent of $t$ in \eqref{eq:abs-mu2} is the area of the $\ell$th triangle, i.e.
\[ \frac{1}{2} m_1(\kappa_2+\ell-\kappa_1) (E(\kappa_1, \kappa_2+\ell) - \kappa_1) =(\ell+\kappa_2-\kappa_1)^2  / (2(\frac{1}{m_1}+\frac{1}{m_2})). \]
The more complicated exponent of $t$ in \eqref{eq:rel-mu2} is the lattice area \S\ref{intro:nov} of the same triangle, which coincides with the cardinality of $u^{-1}(D)$ when $D$ is in the first quadrant very close to $(0,0)$ ---  see \cite{LP}.
\end{proof}

\subsection{Theta functions}
\label{two:theta}
Let $\theta_{m,k}$, $\tabs_{m,k}$ be as in \eqref{eq:ttabs}
\[
\theta_{m,k} := \sum_{i = -\infty}^{\infty} t^{m \frac{i (i-1)}{2}+ k i}\, z^{mi + k}, \qquad \tabs_{m,k}  := \sum_{i = -\infty}^{\infty} t^{(mi+k)^2/(2m)} z^{mi+k}
\]
The Jacobi theta function is $\theta_{1,0}$, and the others are obtained by a simple change of variables
\[
\theta_{m,k}(t,z) = z^k \theta_{1,0}(t^m,t^k z^m), \qquad \theta_{m,k}(t,t^{\frac{1}{2}} z) = t^{{k(m-k)}/(2m)}\theta_{m,k}^{\mathrm{abs}}(t,z)
\]
Although these series are doubly infinite in $z$, when formally expanding the product of $\theta_{m,k}$ and $\theta_{m',k'}$ only finitely many terms contribute to the coefficient of any monomial $z^e t^f$ --- the same goes for $\tabs_{m,k}$ and $\tabs_{m',k'}$.  This is a consequence of the convexity of the functions $i \mapsto m \binom{i}{2}+ki$ and $i \mapsto (mi+k)^2/(2m)$ in the exponent of $t$.  That the resulting series $\theta_{m,k} \cdot \theta_{m',k'}$ or $\tabs_{m,k} \cdot \tabs_{m',k'}$ can be written as a linear combination of $\theta_{m+m',0},\cdots,\theta_{m+m',m+m'-1}$ is a standard but nontrivial fact about the theta functions, the formulas for these coefficients is the same as \eqref{eq:rel-mu2} and \eqref{eq:abs-mu2}: putting $\kappa_1 = k_1/m_1$ and $\kappa_2 = k_2/m_2$,
\[
\theta_{m_2,k_2} \cdot \theta_{m_1,k_1} = \sum_{\ell \in \bZ} \theta_{m_1 + m_2,E(\kappa_1,\kappa_2 + \ell)} t^{\lambda(\kappa_1,\kappa_2 + \ell)}
\] 
and
\[
\tabs_{m_2,k_2} \cdot \tabs_{m_1,k_1} = \sum_{\ell \in \bZ} \tabs_{m_1 + m_2,E(\kappa_1,\kappa_2 + \ell)} t^{(\ell+\kappa_2 - \kappa_1)^2/(2(\frac{1}{m_1} + \frac{1}{m_2}))}
\] 
In other words the map $x_{m,k} \mapsto \theta_{m,k}$ or $\tabs_{m,k}$ is a ring homomorphism.  One may verify this directly (and we will do so in the next section when we turn on an $F$-field), but it is natural to ask what is the Floer-theoretic origin of these series.  Each summand of \eqref{eq:ttabs} is indexed by a \emph{right} triangle, with one vertex at $x_{m,k/m}$ and sides along $\Ls{0},\Ls{m},\Ls{\infty}$.  The exponent of $t$ carries the area (or lattice area) of this right triangle and the exponent of $z$ carries the number of times the vertical edge of the triangle wraps around $\Ls{\infty}$ --- this $z$ can be interpreted as the monodromy of a rank one local system \S\ref{subs:localsystems}.  For instance the right triangles contributing to $\theta_{2,1}$ have the form (for $i$ positive)
\begin{center}
\begin{tikzpicture}
\tikzset{->-/.style={decoration={ markings,
        mark=at position #1 with {\arrow[scale=1.5,>=stealth]{>}}},postaction={decorate}}}
\draw[black, thick, ->-=.5] (0.1,0)--(1,0);
\draw[black,thick,->-=.5] (1,0)--(0,2);
\draw[black,thick,->-=.5] (0,2)--(0,.1);
\node[below right] at (1,0) {$x_{2,1/2}$};
\node[left] at (0,.75) {$z^{mi+ k} = z^{2i + 1}$};
\end{tikzpicture}
\end{center}

\subsection{The punctured torus, the large complex structure limit}
Part of the motivation for relative Floer theory is to make sense of the specialization $t = 0$ in the polygon sums.  Setting $t = 0$ has the effect of discarding the polygons that contribute a positive power of $t$, which in the relative case is the same as discarding the polygons that touch $D$.  One gets the same effect by doing Floer theory for Lagrangians in the punctured torus $T - D$.  When $T-D$ is equipped with the right symplectic structure, one with infinite area in a neighborhood of $D$, this is called the ``large volume limit'' of Floer theory.

We can treat the Lagrangians $\Ls{m}$ as boundary conditions for triangles $u:\Delta^2 \to T - D$, and sum over them to obtain a map $\mu_2$ as before, this time defined on the free $\bZ$-modules spanned by $\Ls{m_1} \cap \Ls{m_2}$.  For instance,
\[
\mu_2(\tau x_{1,0},x_{1,0}) = x_{2,0} + x_{2,1/2} \cdot 2
\]
where $x_{2,0}$ comes from a constant map and the two copies of $x_{2,1/2}$ come from the two shaded triangles in the figure 
\begin{center}
\begin{tikzpicture}[scale = .7]
      \tikzset{->-/.style={decoration={ markings,
        mark=at position #1 with {\arrow[scale=1,>=stealth]{>}}},postaction={decorate}}}

   \begin{scope} 
          \clip (0,6) -- (3,0) -- (6,0) ;
           \clip[preaction={draw,fill=gray!75}] (0,0) rectangle (8,8);
   \end{scope}

  \begin{scope} 
          \clip (0,6) -- (3,6) -- (6,0) ;
           \clip[preaction={draw,fill=gray!45}] (0,0) rectangle (8,8);
   \end{scope}

	\draw (0,0) -- (0,6);
         
	 \draw[blue, ->-=.5]  (0,0) -- (6,0);
         \draw [blue, ->-=.5] (0,6) -- (3,0);
         \draw [blue, ->-=.5] (3,6) -- (6,0);
         \draw [blue, ->-=.5] (0,6) -- (6,0);

         \draw (0,6) -- (6,6);
         \draw (6,0) -- (6,6);

\node at (1.3,2)   {\scriptsize $\Ls{2}$};
\node at (3.4,1.8)   {\scriptsize $\Ls{1}$};

\node at (3.5,-0.6)   {\scriptsize $\Ls{0}$};

\draw[thick, fill=black] (0.3,0.5) circle(.03);

\node at (0.6,0.5) {\footnotesize $D$};

\end{tikzpicture}
\end{center}
We also record
\begin{eqnarray*}
\mu_2(\tau^2 x_{1,0},x_{2,0}) & = & x_{3,0} + x_{3,1/3} + x_{3,2/3} \\
\mu_2(\tau^2 x_{1,0},x_{2,1/2}) & = & x_{3,1/3} + x_{3,2/3} \\
\mu_2(\tau^3 x_{3,1/3},x_{3,2/3}) & = & x_{6,3/6} \\
\end{eqnarray*}
and
\[
\mu_2(\tau^4 x_{2,1/2}, \mu_2(\tau^2 x_{2,1/2},x_{2,1/2}))  =  x_{6,3/6}.
\]
If one sets $z = x_{1,0}$, $x = x_{2,1/2}$, $y = x_{3,2/3}$, these equation imply in particular that
\[
y^2 z + x^3 = xyz
\]
This is the equation for a nodal plane cubic curve --- the ``large complex structure limit'' that matches the large volume limit under mirror symmetry.

\subsection{$A_{\infty}$-relations}
\label{ainfrelations}
The moduli spaces $\cM$ that parametrize non-rigid polygons are not usually compact.  For example, suppose $L_0,L_1,L_2,L_3$ are as in the diagram
\begin{center}
\begin{tikzpicture}
\tikzset{->-/.style={decoration={ markings,
        mark=at position #1 with {\arrow[scale=1.5,>=stealth]{>}}},postaction={decorate}}}
\draw[blue, thick, ->-=.9] (0,1.5)--(2,-1.5);
\node[above left] at (0,1.5) {$L_3$};
\draw[red,thick,->-=.9] (0,-1.5)--(2,1.5);
\node[below left] at (0,-1.5) {$L_0$};
\draw[purple,thick,->-=.45] (-1,.5)--(2.4,-1.2);
\node[above left] at (-1,.5) {$L_2$};
\draw[black,thick,->-=.45] (-1,-.5)--(2.4,1.2);
\node[below left] at (-1,-.5) {$L_1$};

\end{tikzpicture}
\end{center}
Denote the red-black, black-purple, and purple-blue intersection points by $f$, $g$, and $h$ respectively, and the blue-red intersection point by $hgf$.  Then $\cM(hgf,f,g,h)$ includes the following one-parameter family of quadrilaterals:
\begin{equation}
\label{eq:<<<}
\begin{aligned}
\begin{tikzpicture}
\tikzset{->-/.style={decoration={ markings,
        mark=at position #1 with {\arrow[scale=1.5,>=stealth]{>}}},postaction={decorate}}}
\draw[blue, thick] (1-.15,.25)--(1.5,-.75);
\node at (1-.15,.25) {\tiny $\ast$};
\draw[red,thick] (1,0)--(1.5,.75);
\draw[purple,thick] (0,0)--(1.5,-.75);
\draw[black,thick] (0,0)--(1.5,.75);
\end{tikzpicture}
\begin{tikzpicture}
\tikzset{->-/.style={decoration={ markings,
        mark=at position #1 with {\arrow[scale=1.5,>=stealth]{>}}},postaction={decorate}}}
\draw[blue, thick] (1,0)--(1.5,-.75);
\draw[red,thick] (1,0)--(1.5,.75);
\draw[purple,thick] (0,0)--(1.5,-.75);
\draw[black,thick] (0,0)--(1.5,.75);
\end{tikzpicture}
\begin{tikzpicture}
\tikzset{->-/.style={decoration={ markings,
        mark=at position #1 with {\arrow[scale=1.5,>=stealth]{>}}},postaction={decorate}}}
\draw[blue, thick] (1,0)--(1.5,-.75);
\draw[red,thick] (1-.15,-.25)--(1.5,.75);
\node at (1-.15,-.25) {\tiny $\ast$};
\draw[purple,thick] (0,0)--(1.5,-.75);
\draw[black,thick] (0,0)--(1.5,.75);
\end{tikzpicture}
\end{aligned}
\end{equation}
They all have the same image closure, but along the boundary may back-track along either the blue or the red line.  Near the $\ast$s, the map $u$ is biholomorphic to the map from the upper half-plane to $\bC$ that sends $z$ to $z^2$.  At the extreme parameters, where the $\ast$ reaches all the way to the black or to the purple line, there is no such $J$-holomorphic quadrilateral --- so $\cM$ is not compact --- but each of those extremes is occupied by a pair of $J$-holomorphic triangles.

Since the quadrilaterals are not rigid, they do not contribute to $\mu_3(h,g,f)$.  But the triangles at the extremes are rigid, at one end they contribute to $\mu_2(g,f)$ and $\mu_2(h,gf)$, and at the other end to $\mu_2(h,g)$ and $\mu_2(hg,f)$.  The interpolating family of quadrilaterals exhibits a relation between them.

More generally there is a compactification (the Deligne-Mumford-Stasheff compactification) of $\cM$.  When everything is transversely cut \S\ref{tcc}, the compactification is a topological manifold-with-corners, whose corners are indexed by tuples of rigid polygons.  Equipping the $L_i$ with orientations and stars induces an orientation on $\cM$ and its compactification --- \eqref{eq:sign-spin} is a special case of this orientation.  The oriented compactification of $\cM$ is used in the proof (it essentially is the proof) of the following equations among the polygon maps $\mu_n$:
\begin{equation}
\label{eq:ainfrelations}
\sum_{i+j = n+1} \sum_{\ell < i} (-1)^{|f_1|+\cdots+|f_\ell|-\ell} \mu_i(f_n,,\ldots,f_{\ell + j + 1},\mu_j(f_{\ell+j},\ldots,f_{\ell+1}),f_{\ell},\ldots,f_1) = 0
\end{equation}

The algebraic structure formed by the $\CF(L,L')$ and the maps $\mu_n$, subject to the relations \eqref{eq:ainfrelations}, is called an ``$A_{\infty}$-precategory'' in \cite[\S 4.3]{KS}.  Each $L$ is like an object, and each $f \in \CF(L,L')$ is like a morphism between objects.  \eqref{eq:ainfrelations} expresses the fact that these morphism spaces are cochain complexes and that the composition law is associative up to chain homotopy in a strong sense.  It falls short of being an $A_{\infty}$-category, for instance because $\CF(L,L')$ is defined only when $L$ and $L'$ meet transversely, so $\CF(L,L)$ is undefined and there is no ``morphism'' that could play the role of the identity map.  A standard way to address this problem is by analyzing the sense in which $\CF(L,L')$ are invariant under Hamiltonian isotopies --- Floer's theory of continuation.

\subsection{Example --- identity maps}
\label{ex-idmap}
There is a Floer cochain complex $\CF(L,L')$, well-defined up to quasi-isomorphism, even if $L$ and $L'$ do not intersect transversely.  In case $L'$ meets both $L_0$ and $L_1$ transversely, then in the absolute setting (resp. relative setting) any Hamiltonian isotopy (resp. any Hamiltonian isotopy supported on the complement of $D$) induces a quasi-isomorphism between $\CF(L_0,L')$ and $\CF(L_1,L')$ \S\ref{subsec:cont}.  One accesses $\CF(L,L')$ by perturbing $L$.

We illustrate this in an example that fills in the zeroth graded piece of \eqref{eq:121}.  Let $\phi^s$ denote the flow of $H(x,y) = \sin(2 \pi x)$, i.e.
\begin{equation}
\label{eq:phis}
\phi^s(x,y) = (x,y - s \cos(2\pi x))
\end{equation}
Then for $0 < s < 1$, $\phi^s \Ls{0}$ meets $\Ls{0}$ transversely at the points $x = (.25,0)$ and $y = (.75,0)$, so that $\CF(\phi^s \Ls{0},\Ls{0}) = \Lambda x \oplus \Lambda y$.  In a suitable fundamental domain the picture is this:
\begin{center}
\begin{tikzpicture}[scale=0.7]
      \tikzset{->-/.style={decoration={ markings,
        mark=at position #1 with {\arrow[scale=2,>=stealth]{>}}},postaction={decorate}}}
         \draw (0,-1)--(6,-1)--(6,6-1)--(0,6-1)--(0,-1);
         \draw[blue, ->-=.5]  (0,1) -- (6,1);
         \draw[domain=0:6,smooth,variable=\x,blue, ->-=.65] plot ({\x},{1-1.5*cos(60*\x)});
         \node at (2,0.97) {$\star$};
         \node at (2.15,2) {$\star$};
         \node[above left] at (1.4,0.4) {$x$};
         \node[above right] at (4.6,0.3) {$y$};
         \node[above] at (3.4,2.2) {$\phi^s \Ls{0}$};
         \node[below] at (3,0.8) {$\Ls{0}$};
\end{tikzpicture}
\end{center}
Equipping $\Ls{0}$ with its default orientation and $\phi^s \Ls{0}$ with the orientation induced by $\phi^s:\Ls{0} \cong \phi^s \Ls{0}$, the Maslov degrees are
\begin{equation}
\label{eq:herbert}
\CF^0(\phi^s \Ls{0},\Ls{0}) = x \Lambda \qquad \CF^1(\phi^s \Ls{0},\Ls{0}) = y\Lambda
\end{equation}
There are two bigons contributing to the differential $\mu_1$, of equal area $A$.  Both bigons have input $x$ and output $y$, and their signs \S\ref{sorp} are opposite to each other (no matter where the stars are placed), so that the differential is
\[
\mu_1(x) = y(t^A - t^A) = 0
\] 
Some variations of this computation are made in \S\ref{subsec:ex-parallel} and \S\ref{three:id-map}.

\subsection{Continuation}
\label{subsec:cont}
Let $X_H$ denote the Hamiltonian vector field of a function $H:[0,1] \times T \to \bR$, and write $\phi^s$ for its time $s$ flow, $\phi^s:T \to T$.  Let us review how the quasi-isomorphism
\begin{equation}
\label{eq:blobel}
\CF(L,L') \to \CF(\phi^s L, L')
\end{equation}
works in the absolute setting.  The map \eqref{eq:blobel} goes back to \cite[Thm. 4]{Floer}, our notation is closer to the appendix of \cite{Auroux}.  It is defined in terms of a set $\cM(x,y,\phi,\beta)$ of maps
\[
u:[-\infty,\infty] \times [0,1] \to T
\]
that obey the boundary conditions 
\[
\begin{array}{rcl}
u([-\infty,\infty] \times \{0\}) & \subset & L \\ \quad 
u([-\infty,\infty] \times \{1\}) & \subset & L'
\end{array}
\qquad
u(\{\infty\} \times [0,1]) = \{x\}
\qquad
u(-\infty,\tau) = \phi^{s \cdot \tau}(y)
\]
and (with analytic index zero \S\ref{tcc}) Floer's $X_H$-perturbed $J$-holomorphic curve equation:
\begin{equation}
\label{eq:aston}
\partial u/\partial \sigma + J(\partial u/\partial \tau - \beta(\sigma) s X_H) = 0
\end{equation}
Here $\beta$ (the ``profile function'') is a monotone decreasing $\bR$-valued function on $[-\infty,\infty]$ with $\beta(\sigma) = 1$ for $\sigma \ll 0$ and $\beta(\sigma) = 0$ for $\sigma \gg 0$.  The formula for \eqref{eq:blobel} is
\begin{equation}
\label{eq:palade}
x \mapsto \sum_{y \in \phi^s L \cap L'} y \sum_{u \in \cM(x,y,\phi)} \pm t^{\text{topological energy of $u$}}
\end{equation}
where the ``topological energy'' is (see Lemma 14.4.5 \cite{Ohvol2}).
\[ \int u^* \omega + \int_{0}^{1} H(\tau, u(\infty, \tau)) d\tau  - \int_{-\infty}^{\infty} \beta'(\sigma) s \int_0^1 (H_\tau \circ u) d\tau d\sigma  \]
This is sometimes a negative quantity, so we must allow $t^{-1} \in \Lambda$.
\medskip

The homotopy inverse to \eqref{eq:blobel} is just the continuation map for the reversed flow $\phi^{-s}$.  To describe the chain homotopy between the composite
\begin{equation}
\label{eq:con-con-inverse}
\CF(\phi^s L,L') \to \CF(\phi^{-s} \phi^s L,L') = \CF(L,L') \to \CF(\phi^s L,L')
\end{equation}
and the identity, map, let $B_r(\sigma)$ (for each $r > 0$) be a function that vanishes on an interval of length $r$ and that agrees up to translation of $\sigma$ with $\beta(\sigma)$ when $\sigma$ is to the left of that interval and with $\beta(-\sigma)$ when $\sigma$ is to the right of that interval.  Then
\begin{equation}
\label{eq:kossel}
\Delta(y) = \sum_x x \sum_u \pm t^{\text{topological energy of $u$}}  
\end{equation}
where the inner sum is over strips $u:[-\infty,\infty] \times [0,1] \to T$ that solve (for some $r$, with index $-1$)
\[
\partial u/\partial \sigma + J(\partial u/\partial \tau - B_r(\sigma) s X_H) = 0 \qquad r \in \bR_{\geq 0}
\]
and that have $u(-,0) \subset L$, $u(-,1) \subset L'$, and $u(-\infty,\tau) = \phi^{s\cdot \tau}(x)$ and $u(\infty,\tau) = \phi^{s\cdot \tau}(y)$.

\subsection{Example}
\label{subsec:ex-parallel}
Suppose that $L$ and $L'$ are a pair of parallel, horizontal circles, at distance $c$ apart.  With $\phi^s$ as in \eqref{eq:phis}, $\phi^s(L) \cap L'$ is empty unless $|s| > c$.  If $|s|$ only slightly exceeds $c$, then $\phi^s(L) \cap L'$ has two intersection points, say $x$ and $y$ as in the diagram:

\begin{center}
\begin{tikzpicture}[scale=0.7]
      \tikzset{->-/.style={decoration={ markings,
        mark=at position #1 with {\arrow[scale=2,>=stealth]{>}}},postaction={decorate}}}
         \draw (0,-1)--(6,-1)--(6,6-1)--(0,6-1)--(0,-1);
         \draw[blue, ->-=.5]  (0,0) -- (6,0);
         \draw [blue, ->-=.5] (0,2) -- (6,2);
         \node at (1.5,0.05) {$\star$};
         \node at (1.5,2.05) {$\star$};
         \node[above] at (5,2) {$L$};
         \node[below] at (5,0) {$L'$};
\end{tikzpicture}
\qquad
\begin{tikzpicture}[scale=0.7]
      \tikzset{->-/.style={decoration={ markings,
        mark=at position #1 with {\arrow[scale=2,>=stealth]{>}}},postaction={decorate}}}
         \draw (0,-1)--(6,-1)--(6,6-1)--(0,6-1)--(0,-1);
         \draw[blue, ->-=.5]  (0,0) -- (6,0);
         \draw[domain=0:6,smooth,variable=\x,blue, ->-=.65] plot ({\x},{2-2.5*cos(60*\x)});
         \node at (1.5,0.05) {$\star$};
         \node at (1.5,2.15) {$\star$};
         \node[above left] at (0.8,.1) {$x$};
         \node[above right] at (5.2,0) {$y$};
         \node[above] at (5,3) {$\phi^s L$};
         \node[below] at (4,0) {$L'$};
\end{tikzpicture}
\end{center}
Thus, $\CF(L,L') = 0$, while (noting that there are two bigons in the right picture, a small one of area $A$ and a large one of area $A+c$) and $\CF(\phi^s L, L') = x \Lambda \oplus y\Lambda$, with differential
\[
\mu_1(x) = y (t^{A} - t^{A+c})
\]
(see \S\ref{sorp} for the signs).

As $\CF(L,L') = 0$, the composite \eqref{eq:con-con-inverse} is zero.  By Floer's theorem, the identity map on $\CF(\phi^s L,L')$ is chain homotopic to zero, with \eqref{eq:kossel} supplying the contracting homotopy.  Indeed the contracting homotopy is the geometric series
\[
\Delta(y) = x \cdot (t^{-A} + t^{-A + c} + \cdots + t^{-A + nc}  + \cdots)
\]
The strip $u_n$ contributing the $n$th term in the series (of topological energy $nc - A$) stretches horizontally across $n+2$ fundamental domains, crossing the boundary of the fundamental domain exactly $n+1$ times.  Here is a picture of $u_0\vert_{\partial ([-\infty,\infty] \times [0,1])}$ and $u_1\vert_{\partial ([-\infty,\infty] \times [0,1])}$ 
\begin{center}
 \begin{tikzpicture}[scale=0.7]
      \tikzset{->-/.style={decoration={ markings,
        mark=at position #1 with {\arrow[scale=2,>=stealth]{>}}},postaction={decorate}}}
         \draw (0,-1)--(6,-1)--(6,6-1)--(0,6-1)--(0,-1);
         \draw[blue, ->-=.5]  (0,0) -- (6,0);
         \draw[domain=0:6,smooth,variable=\x,blue, ->-=.65] plot ({\x},{2-2.5*cos(60*\x)});
         \node at (1.5,0.05) {$\star$};
        
         \node[above left] at (0.8,.1) {$x$};
         \node[above right] at (5.2,0) {$y$};
         \node[above] at (5,3) {$\phi^s L$};
         \node[below] at (4,0) {$L'$};
          \draw [blue, ->-=.5,dashed] (0,2) -- (6,2);
          \draw (0-6,-1)--(6-6,-1)--(6-6,6-1)--(0-6,6-1)--(0-6,-1);
          \draw[domain=0-6:6-6,smooth,variable=\x,blue, ->-=-.65] plot ({\x},{2-2.5*cos(60*\x)});
          \draw[blue, ->-=.5]  (0-6,0) -- (6-6,0);
          \draw [blue, ->-=.5,dashed] (0-6,2) -- (6-6,2);
          
          \draw (0+6,-1)--(6+6,-1)--(6+6,6-1)--(0+6,6-1)--(0+6,-1);
          \draw[domain=0+6:12,smooth,variable=\x,blue, ->-=-.65] plot ({\x},{2-2.5*cos(60*\x)});
          \draw[blue, ->-=.5]  (0+6,0) -- (6+6,0);
          \draw [blue, ->-=.5,dashed] (0+6,2) -- (6+6,2);

          \draw[black,line width = 1pt] (6+.64,0)--(6+0.64,2)--(6-.64,2)--(6-.64,0)--(6+.64,0);
          
          \draw[black,line width = 1pt] (6+.64,0)--(6+0.64,2)--(0-.64,2)--(0-.64,0)--(6+.64,0);

\end{tikzpicture}
\end{center}

\section{Floer theory coupled to an $F$-field}
\label{three}

In this section we go over the constructions and calculations of \S\ref{two}, coupling all the sums over polygons to a sheaf of rings $\underline{\Lambda}$ of the kind discussed in \S\ref{intro:ffield}.  We are interested in the case when $\underline{\Lambda}$ is pulled back along the projection map $\mathfrak{f}:T \to S^1$ \eqref{eq:intro:ffield}, and set up some notation for dealing with that case in \S\ref{three:Ffield}.

\subsection{Cochain complex}
If $L$ and $L'$ are two embedded circles in $T$, meeting transversely, let us write (just as in \eqref{eq:intro:172})
\begin{equation}
\label{eq:421}
\CF(L,L';\underline{\Lambda}) := \bigoplus_{x \in L \cap L'} \underline{\Lambda}_x
\end{equation}
If $a \in \underline{\Lambda}_x$ and we wish to regard it as an element of \eqref{eq:421}, we will write it  as $x \cdot a$.  We equip \eqref{eq:421} with a $\bZ/2$-grading by equipping $L$ and $L'$ with orientations, just as in \S\ref{subsec:tsfccam}.  After further equipping $L$ and $L'$ with stars, we define a differential
\[
\mu_1:\CF^i(L,L';\underline{\Lambda}) \to \CF^{i+1}(L,L';\underline{\Lambda})
\]
by the following analog of \eqref{eq:mu1}:
\begin{equation}
\label{eq:423}
x\cdot a \mapsto \sum_{y | \mas(y) = i+1}  y \left( \sum_{u \in \cM(y,x)} \pm t^{\mathrm{area}(u)} \nabla \gamma'\left(a \nabla \gamma(1_{\underline{\Lambda}_y})\right)\right)
\end{equation}
where
\begin{itemize}
\item $\gamma:I \to L$ is the path along the $L$-side of the bigon $u$ starting at $y$ and ending at $x$,
\item $\gamma':I \to L'$ is the path along the $L'$-side of $u$ starting at $x$ and ending at $y$.
\end{itemize}
This differential does not obey anything like $\mu_1(xa ) = \mu_1(x)a$ --- in fact $ \mu_1(x)a$ is typically undefined and in general $\mu_1(x a)$ and $\mu_1(x)$ do not have any useful relationship with each other.
\medskip

We obtain a $\underline{\Lambda}$-version of the continuation map \eqref{eq:blobel}
\begin{equation}
\label{eq:blobel-2}
\CF(L,L';\underline{\Lambda}) \to \CF(\phi^s L,L';\underline{\Lambda})
\end{equation}
by multiplying each summand of \eqref{eq:palade} by $\nabla \gamma'(a \nabla \gamma(\nabla(\phi^{-\tau s}(y))(1_{\underline{\Lambda}_y})))$, where $\gamma = u(\tau,0)$ and $\gamma' = u(-\tau,1)$ are the paths along $L$ and $L'$ respectively, and $\phi^{-\tau s}(y)$ is the reverse of the trajectory from $y$ to $\phi^s y$, i.e.
\begin{equation}
\label{eq:cont-twist}
\sum_{y \in \phi^s L \cap L'} y \sum_{u \in \cM(x,y,\phi)} \pm t^{\text{topological energy of $u$}}
\nabla \gamma'(a \nabla \gamma(\nabla (\phi^{-\tau s}(y))(1_{\underline{\Lambda}_y})))
\end{equation}
The same recipe as \S\ref{subsec:cont} gives the homotopy inverse to \eqref{eq:blobel-2}, only replacing \eqref{eq:kossel} by 
\begin{equation}
\label{eq:delta-twist}
a \cdot y \mapsto \sum_x x \sum_u \pm t^{\text{topological energy of $u$}} \nabla \gamma' \left(a \cdot \nabla (\phi^{\tau s} (y)) \nabla \gamma \nabla( \phi^{-\tau s}(x)) (1_{\underline{\Lambda}_x})\right)
\end{equation}

\subsection{Polygon maps}
\label{subsec:44}
Let $L_0,L_1,\ldots,L_n$ be oriented, starred submanifolds of $T$ as in \S\ref{two:polygon}.  We define a variant of \eqref{eq:mun}
\begin{equation}
\label{eq:441}
\mu_n:\CF^{i_n}(L_{n-1},L_n;\underline{\Lambda}) \times \cdots  \times \CF^{i_1}(L_0,L_1;\underline{\Lambda}) 
\to \CF^{i_1+\cdots+i_n + 2 - n}(L_0,L_n;\underline{\Lambda})
\end{equation}
carrying $(x_n \cdot a_n,x_{n-1} \cdot a_{n-1},\ldots, x_1 \cdot a_1)$ (with each $a_i \in \underline{\Lambda}_{x_i}$) to
\begin{equation}
\label{eq:442}
\sum_y y \left( \sum_u \pm t^{\mathrm{area}(u)} \nabla \gamma_n(a_n \nabla \gamma_{n-1} (a_{n-1} \cdots \nabla \gamma_2(a_2 \nabla \gamma_1(a_1\nabla(\gamma_0(1))\cdots))))\right)
\end{equation}
where $u$ runs over the same set of rigid polygons as \eqref{eq:mun}, the signs are just the same, and $\gamma_i$ is the path along the boundary of $u$ going from $x_i$ to $x_{i+1}$, or from $x_{n-1}$ to $x_0$.

Each connected component of the Deligne-Mumford-Stasheff compactification of the space of non-rigid polygons has the same vertices $x_0,\ldots,x_n$ --- that is, every $u$ in that component has those same vertices.  Moreover, the path from $x_i$ to $x_{i+1}$ or from $x_n$ to $x_0$ along $u$ belongs to the same homotopy class, so that $\nabla(\gamma_i):\Lambda_{x_i} \to \Lambda_{x_{i+1}}$ is locally constant in $u$.  Thus the $A_{\infty}$-relations among the \eqref{eq:441} hold for the usual reasons:
\begin{equation}
\sum_{i+j = n+1} \sum_{\ell < i} (-1)^{|x_1| + \cdots+ |x_{\ell}| - \ell} \mu_i(x_n a_n,\ldots, x_{\ell+j+1}a_{\ell+j+1},\mu_j(\cdots), x_\ell a_\ell,\ldots  x_1 a_1) = 0
\end{equation}

\subsection{Local systems}
\label{subs:localsystems}
We can put the formulas in \S\ref{subsec:44} in context by considering local systems on the $L_i$.  For each $i$ let $\cE_i$ be a local system of $\underline{\Lambda}\vert_{L_i}$-modules on $L_i$.  Then we define
\begin{equation}
\label{eq:451}
\CF((L_i,\cE_i),(L_{i+1},\cE_{i+1});\underline{\Lambda}) = \bigoplus_{x \in L_i \cap L_{i+1}} \Hom(\cE_{i,x},\cE_{i+1,x})
\end{equation}
The differential is modified by
\begin{equation}
\label{eq:452}
\mu_1(xf:\cE_{i,x} \to \cE_{i+1,x}) = \sum y \sum_u \pm t^{\area(u)} \nabla \gamma' \circ f \circ \nabla \gamma
\end{equation}
(where $xf$ denotes $f$ placed in the $x$th summand of \eqref{eq:451}).  In case $\cE_i = \underline{\Lambda}\vert_{L_i}$ is the trivial sheaf of modules, then $\Hom(\cE_{i,x},\cE_{i+1,x}) = \Hom(\underline{\Lambda}_x,\underline{\Lambda}_x)$ is naturally identified with $\underline{\Lambda}_x$ and \eqref{eq:452} coincides with \eqref{eq:423}.

The polygon maps
\begin{multline}
\label{eq:453}
\mu_n:\CF((L_{n-1},\cE_{n-1}),(L_n,\cE_n);\underline{\Lambda}) \times \cdots \times \CF((L_0,\cE_0),(L_1,\cE_1);\underline{\Lambda}) \\
\to \CF((L_0,\cE_0),(L_n,\cE_n);\underline{\Lambda})
\end{multline}
are defined by sending $(f_n,f_{n-1},\ldots,f_1)$ to the formal expression
\begin{equation}
\label{eq:454}
\sum_{y} y \left(\sum_{u \in \cM(y,x_1,x_2,\ldots,x_n)} \pm t^{\area(u)}\nabla \gamma_n\circ f_n\circ \nabla\gamma_{n-1} \circ f_{n-1} \circ \cdots \circ \nabla \gamma_1 \circ f_1 \circ \nabla \gamma_0 \right)
\end{equation}
We've left the degrees in \eqref{eq:453} off for typesetting reasons; they are the same as in \eqref{eq:mun}.  The formula \eqref{eq:454} specializes to \eqref{eq:442} in case $\cE_i = \underline{\Lambda}\vert_{L_i}$.  In general \eqref{eq:454} can fail to converge, unless the following ``unitarity'' condition is imposed on the $\cE_i$:
\begin{quote}
Each fiber of $\cE_i$ is locally free of finite rank over $\underline{\Lambda}\vert_{L_i}$, and the monodromy preserves an $\underline{\Lambda}^0\vert_{L_i}$-lattice.
\end{quote}

\subsection{$F$-field}
\label{three:Ffield}
We would like to package the $\underline{\Lambda}$-coupled triangle products among the $\Ls{m}$ into a graded ring, as in \S\ref{intro:dehn}.  The necessary natural isomorphism between $\CF(\Ls{m},\Ls{n};\underline{\Lambda})$ and $\CF(\Ls{0},\Ls{n-m};\underline{\Lambda})$ exists only when $\underline{\Lambda}$ is pulled back along the projection map
\begin{equation}
\label{eq:191}
\mathfrak{f}:T \to S^1 :(x,y) + \bZ^2 \mapsto x + \bZ
\end{equation}
To make this explicit, let $\sigma:C \to C$ be a ring automorphism, where $C$ is commutative.  We also let $\sigma$ denote induced automorphism of $C[\![t]\!]$ or of $\Lambda_C$, with $\sigma(t^a) = t^a$.  We are mainly interested in the case that
\begin{equation}
\label{eq:192}
\text{$C$ perfect of characteristic $p$}, \quad \sigma(c) = c^{1/p}
\end{equation}
in which case if $f(t) = \sum c_a t^a$ belongs to $C[\![t]\!]$ or to $\Lambda_C$ then we can write $\sigma(f)(t) = f(t^p)^{1/p}$.  The quotient
\begin{equation}
\label{eq:etale-space}
(\bR \times C)/\sim
\end{equation}
of $\bR \times C$ by the equivalence relation $(x,c) \sim (x+1,\sigma(c))$ is the \'etal\'e space of a locally constant sheaf of rings on $S^1$ --- as are $(\bR \times C[\![t]\!])/\sim$ and $(\bR \times \Lambda_C)/\sim$.  We denote the pullback-to-$T$ of these sheaves of rings by $\underline{C}$, $\underline{C}[\![t]\!]$, and $\Lambda_{\underline{C}}$.

We will call \eqref{eq:191} an ``$F$-field'' on $T$.  In diagrams, we keep track of it with a red line --- the inverse image of a point close to the right edge of the fundamental domain $[0,1)$ of $S^1$, as in the figure on the left below.  On the right we have drawn, in a different scale, the preimage of the red line in part of the universal cover of $T$, along with a triangle that contributes to $\mu_2(x_2 \cdot b,x_1 \cdot a)$.  One understands that $\sigma$ or $\sigma^{-1}$ is to be applied every time one crosses this ``danger line'' --- $\sigma$ if one crosses it from right to left, $\sigma^{-1}$ if one crosses it from left to right.  
\begin{center}
\begin{tikzpicture}[scale=0.7]
      \tikzset{->-/.style={decoration={ markings,
        mark=at position #1 with {\arrow[scale=2,>=stealth]{>}}},postaction={decorate}}}
         \draw[blue, ->-=.5]  (0,0) -- (6,0);
         \draw [blue, ->-=.5] (0,6) -- (0,0);
	\draw [blue, ->-=.5] (0,6)--(3,0);
	\draw [blue, ->-=.5] (3,6)--(6,0);	
         \draw (0,6) -- (6,6);
         \draw (6,0) -- (6,6);

\draw[red ] (5.3,0) -- (5.3,6);
\end{tikzpicture}
\qquad \qquad 
\begin{tikzpicture}[scale =.7]
\tikzset{->-/.style={decoration={ markings,
        mark=at position #1 with {\arrow[scale=1.5,>=stealth]{>}}},postaction={decorate}}}
\node[below] at (3,0) {$\gamma_0$};
\node[above] at (2.5,2) {$\gamma_1$};
\node[left] at (1.5,1) {$\gamma_2$};
\node[above left] at (0,4) {$x_2 \cdot b$};
\node[below right] at (4,0) {$x_1 \cdot a$};
\node[below left] at (2,0) {$y$};
\draw[black, thick, ->-=.5] (2.1,0)--(4,0);
\draw[black, thick,->-=.5] (0,4)--(2-.05,0+.1);
\draw[black, thick,->-=.5] (4,0)--(0,4);
\draw[red] (.75, -.25)--(.75,4);
\draw[red] (1.75, -.25)--(1.75,4);
\draw[red] (2.75, -.25)--(2.75,4);
\draw[red] (3.75, -.25)--(3.75,4);
\end{tikzpicture}
\end{center}

\subsection{Example}
\label{three:id-map}
Let $\underline{\Lambda} = \Lambda_{\underline{C}}$ be as in \S\ref{three:Ffield} and let $\Ls{0}$ and $\phi^s \Ls{0}$ be as in \S\ref{ex-idmap}.  
\begin{center}
\begin{tikzpicture}[scale=0.7]
      \tikzset{->-/.style={decoration={ markings,
        mark=at position #1 with {\arrow[scale=2,>=stealth]{>}}},postaction={decorate}}}
         \draw (0,-1)--(6,-1)--(6,6-1)--(0,6-1)--(0,-1);
         \draw[blue, ->-=.5]  (0,1) -- (6,1);
         \draw[domain=0:6,smooth,variable=\x,blue, ->-=.65] plot ({\x},{1-1.5*cos(60*\x)});
         \node at (2,0.97) {$\star$};
         \node at (2.15,2) {$\star$};
         \node[above left] at (1.4,0.4) {$x$};
         \node[above right] at (4.6,0.3) {$y$};
         \node[above] at (3.4,2.2) {$\phi^s \Ls{0}$};
         \node[below] at (3,0.8) {$\Ls{0}$};
         \draw[red] (5.75,-1)--(5.75,5);
\end{tikzpicture}
\end{center}
We will compute the differential on $\CF(\phi^s \Ls{0},\Ls{0};\underline{\Lambda})$ --- this specializes to the example of \S\ref{ex-idmap} in case $\sigma$ is trivial.  As in that example we still have $\CF^0(\phi^s \Ls{0},\Ls{0};\underline{\Lambda}) = x \Lambda$ and $\CF^1(\phi^s \Ls{0},\Ls{0};\underline{\Lambda}) = y\Lambda$, but the map $\mu_1$ is not $\Lambda$-linear so we must compute not just $\mu_1(x)$ but $\mu_1(x a)$ for all $a \in \Lambda$.  The same two bigons in \S\ref{ex-idmap}, of area $A$, contribute to $\mu_1(xa)$, but only of them crosses the ``danger line''
\begin{center}
\begin{tikzpicture}[scale = 0.7]
  \tikzset{->-/.style={decoration={ markings,
        mark=at position #1 with {\arrow[scale=2,>=stealth]{>}}},postaction={decorate}}}
\draw[thick, domain=1.5:4.4,smooth,variable=\x,black] plot ({\x},{.5-1.5*cos(60*\x)});
 \draw[thick, black, ->-=.5]  (1.5,.5) -- (4.4,.5);
  \draw[thick, domain=4.6:7.5,smooth,variable=\x,black] plot ({\x+1},{1-1.5*cos(60*\x)});
 \draw[thick, black, ->-=.5]  (8.5,1) -- (5.6,1);
 \draw[red] (6.75,-.45)--(6.75,1);
\end{tikzpicture}
\end{center} 
The left bigon contributes $y \cdot (-t^A a)$ and the right bigon contributes $y \cdot (t^A \sigma(a))$, so that the differential is given by
\[
\mu_1(x a) = y t^A(\sigma(a) - a)
\]
If we make the change of basis $(x,y) \to (x,yt^A)$, then 
\[
\HF^0 \cong \ker(a \mapsto \sigma(a) - a) \qquad \HF^1 \cong \coker(a \mapsto \sigma(a) -a)
\]
More suggestively, $\HF^i(\phi^s \Ls{0},\Ls{0};\underline{\Lambda})$ is isomorphic to $H^i(\Ls{0};\underline{\Lambda}\vert_{\Ls{0}})$, the cohomology of the circle $\Ls{0}$ with coefficients in $\underline{\Lambda}$.

\subsection{Example}
\label{subsec:ubiquinone}
With $\underline{\Lambda}$ and $\phi$ as in the previous example, let us compute $\CF(\phi^s L,L';\underline{\Lambda})$ when $L$ and $L'$ are two special Lagrangians that are parallel to $\Ls{0}$ and to each other, but that do not intersect.  Let $c$ be the distance between (and therefore also the area between) $L$ and $L'$.  Suppose that $|s|$ slightly exceed $c$, so that $\phi^s(L) \cap L'$ has two intersection points that we again denote by $x$ and $y$.
\begin{center}
\begin{tikzpicture}[scale=0.7]
      \tikzset{->-/.style={decoration={ markings,
        mark=at position #1 with {\arrow[scale=2,>=stealth]{>}}},postaction={decorate}}}
         \draw (0,-1)--(6,-1)--(6,6-1)--(0,6-1)--(0,-1);
         \draw[blue, ->-=.5]  (0,0) -- (6,0);
         \draw[domain=0:6,smooth,variable=\x,blue, ->-=.65] plot ({\x},{2-2.5*cos(60*\x)});
         \node at (1.5,0.05) {$\star$};
         \node at (1.5,2.15) {$\star$};
         \node[above left] at (0.8,.1) {$x$};
         \node[above right] at (5.2,0) {$y$};
         \node[above] at (5,3) {$\phi^s L$};
         \node[below] at (4,0) {$L'$};
         \draw[red] (5.75,-1)--(5.75,5);
\end{tikzpicture}
\end{center}
Then the differential on $\CF(\phi^s L,L';\underline{\Lambda})$ is $\mu_1(x a) = y(t^a \sigma(a) - t^{A+c} a)$.  If $c$ is not zero, the complex is acyclic, but it is interesting to note that the formal series
\begin{equation}
\label{eq:431}
x \cdot \sum_{n \in \bZ} \sigma^{-n}(a) t^{nc} \qquad a \in \underline{C}_x
\end{equation}
are killed by $\mu_1$.  Since \eqref{eq:431} has an infinite ``tail'', it does not lie in $\underline{\Lambda}_x$ and does not contribute to $\CF(\phi^s L,L';\underline{\Lambda})$.
\medskip

Since $L \cap L'$ is empty the continuation maps associated to $\phi^s$ and its reverse $\phi^{-s}$
\[
\CF(L,L';\underline{\Lambda}) \to \CF(\phi^s L,L';\underline{\Lambda})\quad \text{and} \quad \CF(\phi^s L,L';\underline{\Lambda}) \to \CF(L,L';\underline{\Lambda})
\]
 both vanish.  But the explicit contracting homotopy on $\CF^1(\phi^s L,L';\underline{\Lambda}) \to \CF^0(\phi^s L,L';\underline{\Lambda})$ is interesting, it is given by the series
\begin{equation}
\label{eq:dhap}
\Delta(a \cdot y) = x (t^{-A} \sigma^{-1}(a) +  t^{-A+c} \sigma^{-2}(a) + \cdots +  t^{-A+nc} \sigma^{-n-1}(a) + \cdots)
\end{equation}
The strip $u_n$ contributing the $t^{-A+nc}$ term in this series is the same as in \S\ref{subsec:ex-parallel}, but that contribution is now multiplied by $\nabla \gamma' \left(a \cdot \nabla (\phi^{\tau s} (y)) \nabla \gamma \nabla( \phi^{-\tau s}(x)) (1_{\underline{\Lambda}_x})\right)$, which simplifies to $\sigma^{-n-1}(a)$.

\subsection{Computing the triangle maps}
Let $\Ls{m}$ (equipped with the same orientations and stars), $x_{m,\kappa}$, and $\tau$ be as in \S\ref{two:tri-maps}.  Let $\underline{\Lambda}$ and $\underline{C}$ be pulled back along $\mathfrak{f}$, with $\sigma$ denoting the nontrivial monodromy, as in \S\ref{three:Ffield}.  We will compute 
\[
\CF(\Ls{m_1},\Ls{m_1+m_2};\underline{\Lambda}) \times \CF(\Ls{0},\Ls{m_1};\underline{\Lambda}) \to \CF(\Ls{0},\Ls{m_1+m_2};\underline{\Lambda})
\]
by computing $\mu_2(\tau^{m_1}x_{m_2,\kappa_2} \cdot b, x_{m_1,\kappa_1} \cdot a)$.

\begin{theorem*}
Let $E$ be as in \eqref{eq:323}.  Then in the absolute case $\mu_2(\tau^{m_1} x_{m_2,\kappa_2},x_{m_1,\kappa_1})$ is given by
\begin{equation}
\sum_{\ell \in \bZ} x_{m_1+m_2,E(\kappa_1,\kappa_2+\ell)} t^{(\ell+\kappa_2 - \kappa_1)^2/(2(\frac{1}{m_1} + \frac{1}{m_2}))} \sigma^{\ell-\lfloor E(\kappa_1,\kappa_2+\ell)\rfloor}(b) \sigma^{-\lfloor E(\kappa_1,\kappa_2+\ell)\rfloor}(a)
\end{equation}
where we understand $x_{m_1+m_2,E(\kappa_1,\kappa_2+\ell)} := x_{m_1+m_2,\kappa}$ if $E(\kappa_1,\kappa_2+\ell) = \kappa$ modulo $1$.
In the relative case, $\mu_2(\tau^{m_1} x_{m_2,\kappa_2},x_{m_1,\kappa_1})$ is given by
\begin{equation}
\label{eq:monomial-product2}
\sum_{\ell \in \bZ} x_{m_1+m_2,E(\kappa_1,\kappa_2+\ell)} t^{\lambda(\kappa_1,\kappa_2+\ell)} \sigma^{\ell-\lfloor E(\kappa_1,\kappa_2+\ell)\rfloor}(b) \sigma^{-\lfloor E(\kappa_1,\kappa_2+\ell)\rfloor}(a)
\end{equation}
\end{theorem*}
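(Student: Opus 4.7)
The plan is to reuse wholesale the untwisted analysis of \S\ref{two:tri-maps}: the rigid triangles with vertices at $(\kappa_1,0)$, $(E(\kappa_1,\kappa_2+\ell),0)$, $(\kappa_2+\ell,-m_1(\kappa_2+\ell-\kappa_1))$, their symplectic and lattice areas, and the fact that every summand carries sign $+1$ are all independent of $\underline{\Lambda}$. What is new is only the extra factor $\nabla\gamma_2\bigl(b\,\nabla\gamma_1(a\,\nabla\gamma_0(1))\bigr)$ dictated by \eqref{eq:442}, and the task reduces to evaluating this for each lift of each triangle.

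First I would fix the fundamental-domain trivialization of $\underline{\Lambda}$ from \S\ref{three:Ffield} and record the following consequence of the defining equivalence $(x,c)\sim(x+1,\sigma(c))$: a path in $T$ whose lift to $\bR^2$ runs from $(x_p,y_p)$ to $(x_q,y_q)$ induces, between the fundamental-domain fibers at its endpoints, the ring isomorphism $\sigma^{\lfloor x_p\rfloor - \lfloor x_q\rfloor}$. Vertical displacements are irrelevant because $\underline{\Lambda}$ is pulled back along the $x$-projection. The sign convention here is calibrated by the bigon computation of \S\ref{three:id-map}, where the wrapping bigon contributed $\sigma(a)$ rather than $\sigma^{-1}(a)$.

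Next I would apply this rule edge-by-edge to the $\ell$th triangle. Since $\kappa_1,\kappa_2\in[0,1)$, the integer $x$-parts of the vertex lifts are $\lfloor E\rfloor$, $0$, and $\ell$ for $y$, $x_1$, $x_2$ respectively, where $E=E(\kappa_1,\kappa_2+\ell)$. This gives
\[
\nabla\gamma_0 = \sigma^{\lfloor E\rfloor}, \qquad \nabla\gamma_1 = \sigma^{-\ell}, \qquad \nabla\gamma_2 = \sigma^{\ell-\lfloor E\rfloor},
\]
whose composition is the identity, consistent with the triangle boundary being null-homotopic in $T$.

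Finally, substituting into the formula and using that each power of $\sigma$ is a ring automorphism fixing $1$, the $\ell$th summand contributes
\[
\nabla\gamma_2\bigl(b\,\nabla\gamma_1(a\,\nabla\gamma_0(1))\bigr) \;=\; \sigma^{\ell-\lfloor E\rfloor}(b)\,\sigma^{-\lfloor E\rfloor}(a),
\]
which, multiplied by the same power of $t$ as in the corresponding untwisted formula of \S\ref{two:tri-maps}, recovers both the absolute and the relative expressions in the statement. The only genuine pitfall is the direction convention for $\sigma$ versus $\sigma^{-1}$; once that is aligned with the example of \S\ref{three:id-map}, the rest is routine bookkeeping with horizontal winding numbers.
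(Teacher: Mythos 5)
Your proposal is correct and takes essentially the same route as the paper: the untwisted triangle count, areas, and signs are carried over verbatim from \S\ref{two:tri-maps}, and the only new work is evaluating $\nabla\gamma_2(b\,\nabla\gamma_1(a\,\nabla\gamma_0(1)))$ by counting signed crossings of the ``danger line.'' Where the paper counts crossings $\mathfrak{f}(\gamma_i)$ directly, you package the same count into the rule $\nabla\gamma = \sigma^{\lfloor x_p\rfloor - \lfloor x_q\rfloor}$ for a lift from $(x_p,y_p)$ to $(x_q,y_q)$; these agree because the danger line sits just to the left of each integer vertical. Incidentally, your explicit values $\mathfrak{f}(\gamma_0)=\lfloor E\rfloor$, $\mathfrak{f}(\gamma_2)=\ell-\lfloor E\rfloor$ are the correct ones --- the paper's proof has $\gamma_0$ and $\gamma_2$ transposed in the line where it records those crossing numbers, though the theorem it states (and that you derive) is right.
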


\begin{proof}
The triangles that contribute are exactly as in the proof in \S\ref{two:tri-maps}, we will index them again by $\ell \in \bZ$.  The $\pm$ sign and the exponent of $t$ in \eqref{eq:175} are the same as in \S\ref{two:tri-maps}, but it remains to compute $\nabla \gamma_2(b \nabla \gamma_1(a \nabla \gamma_0(1)))$.  If $I$ is an interval and $\gamma:I \to T$ is a path in $T$, write $\mathfrak{f}(\gamma)$ for the number of times $\gamma$ crosses the ``danger line'' \S\ref{three:Ffield}, counted with sign.  Then 
\begin{eqnarray}
\nabla \gamma_2(b \nabla \gamma_1(a \nabla \gamma_0(1))) & = & \sigma^{\mathfrak{f}(\gamma_2)}(b \sigma^{\mathfrak{f}(\gamma_1)}(a)) \\
& = & \sigma^{\mathfrak{f}(\gamma_2)}(b) \sigma^{\mathfrak{f}(\gamma_1)+\mathfrak{f}(\gamma_2)}(a)\\
& = & \sigma^{\mathfrak{f}(\gamma_2)}(b) \sigma^{-\mathfrak{f}(\gamma_0)}(a) \label{eq:364p}
\end{eqnarray}
The $\ell$th triangle (pictured below) has $\mathfrak{f}(\gamma_2) = \lfloor E(\kappa_1,\kappa_2+\ell)\rfloor$ and $\mathfrak{f}(\gamma_0) = \ell - \lfloor E(\kappa_1,\kappa_2+\ell) \rfloor$. 

\begin{center}
\begin{tikzpicture} [scale=1.1]
      \tikzset{->-/.style={decoration={ markings,
        mark=at position #1 with {\arrow[scale=2,>=stealth]{>}}},postaction={decorate}}}
        
        \node at (-2,6.3) {\footnotesize $\Ls{0}$};
        \node at (2,4) {\footnotesize $\Ls{m_1+m_2}$};
        \node at (-2,4) {\footnotesize $\Ls{m_1}$};

        \draw[thick, black] (-4,6) -- (-.1,6);
	\node at (-5.3,6) {\footnotesize $x_{m_1,\kappa_1}= (\kappa_1, 0)$};
        \draw[thick, black] (0+.05,6-.1) -- (2,2);
         \node at (4.6,1.8) {\footnotesize 
	 $x_{m_2,\kappa_2}= \left( \kappa_2+\ell , -m_1(\kappa_2+\ell-\kappa_1) \right)$ };     
        \draw[thick, black] (-4,6) -- (2,2);
	\node at (2.7,6) {\footnotesize $(E(\kappa_1,\kappa_2+\ell), 0) = x_{m_1+m_2, E(\kappa_1,\kappa_2+l)}$};
	\draw[red ] (-3.7,1.5) -- (-3.7,6.5);	
	\draw[red ] (-3.1,1.5) -- (-3.1,6.5);

	\draw[red ] (1.7,1.5) -- (1.7,6.5);
	\draw[red ] (-0.3,1.5) -- (-0.3,6.5);
	\draw[red ] (0.3,1.5) -- (0.3,6.5);

\end{tikzpicture}
\end{center}

\end{proof}

\subsection{Theta functions with $F$-field coupling} 
\label{three:last}
Keeping the notation of the previous section, where $\underline{\Lambda}$ and $\underline{C}$ are pulled back along $\mathfrak{f}$, we may give
\begin{equation}
\label{eq:181}
\bigoplus_{m = 1}^{\infty} \CF(\Ls{0},\Ls{m};\underline{\Lambda})
\end{equation}
the structure of a graded ring without unit.  One may equip it with a unit by taking the degree zero piece to be $\Lambda_{C^{\sigma}} = \HF^0(\Ls{0},\Ls{0};\underline{\Lambda})$ \S\ref{three:id-map}.  Here is a description of \eqref{eq:181} analogous to that of \S\ref{two:theta}:

\begin{theorem*}[\S\ref{intro:thm1}]
For each $a \in C$ and each pair of integers $m,k$ with $m > k \geq 0$, let $\theta_{m,k}[a]$ denote the formal series
\begin{equation}
\label{eq:thetamka}
\theta_{m,k}[a] := \sum_{i = -\infty}^{\infty} t^{m\frac{i(i-1)}{2} + ki} z^{mi + k} \sigma^i(a)
\end{equation}
Let $\tabs_{m,k}[a]$ denote the formal series
\begin{equation}
\label{eq:thetamka_abs}
\tabs_{m,k}[a] := \sum_{i= -\infty}^{\infty} t^{\frac{1}{2m}(mi + k)^2} z^{mi+ k} \sigma^i(a)
\end{equation}
Then the relative (resp. absolute) version of \eqref{eq:181} is isomorphic (as a graded ring-without-unit) to the $\bZ[\![t]\!]$-linear span of the $\theta_{m,k}[a]$ (resp. $\Lambda_{\bZ}$-linear span of the $\tabs_{m,k}[a]$) via the map
\begin{equation} 
\label{eq:ABmap}
x_{m,k/m} \cdot a \mapsto \theta_{m,k} [a] 
\end{equation}
\end{theorem*}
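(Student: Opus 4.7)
The strategy is to verify directly that \eqref{eq:ABmap} is a well-defined, bijective, $\bZ[\![t]\!]$-linear (resp.\ $\Lambda_\bZ$-linear) graded ring-without-unit homomorphism in the relative (resp.\ absolute) case; I will focus on the relative case, the absolute case being entirely parallel.  Well-definedness is immediate: for $0 \le k < m$ the $t$-exponents $m \binom{i}{2} + ki$ tend to $+\infty$ as $|i| \to \infty$, so $\theta_{m,k}[a]$ lies in the ambient ring of formal series of \S\ref{two:theta}, and $a \mapsto \theta_{m,k}[a]$ is $\bZ$-additive and commutes with multiplication by $t$.

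The ring-homomorphism property is the main content.  The preceding triangle-map computation supplies the explicit formula \eqref{eq:monomial-product2} for $\mu_2$; the Dehn-twist identification of $\CF(\Ls{m_1},\Ls{m_1+m_2};\underline{\Lambda})$ with $\CF(\Ls{0},\Ls{m_2};\underline{\Lambda})$ is $\underline{\Lambda}$-compatible because $\underline{\Lambda}$ is pulled back along $\mathfrak{f}$ (\S\ref{three:Ffield}); and the vanishing of $\mu_n$ for $n \neq 2$ on chains of strictly increasing slopes follows from the dimension count \eqref{eq:dim-mas} as in \S\ref{two:tri-maps}.  It therefore suffices to show that the formal product
\[
\theta_{m_2, k_2}[b] \cdot \theta_{m_1, k_1}[a] = \sum_{i, j \in \bZ} t^{m_1 \binom{i}{2} + k_1 i + m_2 \binom{j}{2} + k_2 j}\, z^{m_1 i + m_2 j + k_1 + k_2}\, \sigma^j(b)\, \sigma^i(a)
\]
agrees with the image under \eqref{eq:ABmap} of the right-hand side of \eqref{eq:monomial-product2}.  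The key step is the reindexing $(i,j) \longleftrightarrow (n,\ell)$ given by $\ell := j - i$ and $n := i + \lfloor E_\ell \rfloor$, where $E_\ell := E(\kappa_1,\kappa_2+\ell)$.  This is a bijection of $\bZ^2$, under which the $z$-exponent becomes $(m_1+m_2) n + K_\ell$ with $K_\ell := k_1 + k_2 + m_2 \ell - (m_1+m_2) \lfloor E_\ell \rfloor \in \{0,\ldots,m_1+m_2-1\}$, and the $\sigma$-twist factors as $\sigma^n\bigl(\sigma^{\ell - \lfloor E_\ell \rfloor}(b)\, \sigma^{-\lfloor E_\ell \rfloor}(a)\bigr)$.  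Each fixed-$\ell$ slice of the double sum therefore collects into $t^{\lambda_\ell}\, \theta_{m_1+m_2, K_\ell}[c_\ell]$, provided one can also match the $t$-exponents.

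Matching the $t$-exponents reduces to the polynomial identity
\[
m_1 \binom{i}{2} + k_1 i + m_2 \binom{j}{2} + k_2 j = \lambda(\kappa_1, \kappa_2 + \ell) + (m_1+m_2) \binom{n}{2} + K_\ell n,
\]
which, after substituting $i = n - p$ and $j = n + \ell - p$ for $p := \lfloor E_\ell \rfloor$, and invoking the easy evaluations $\phi(p) = \binom{p}{2}$ for $p \in \bZ$ and $\phi(\kappa_2 + \ell) = \ell \kappa_2 + \binom{\ell}{2}$ for $\kappa_2 \in [0, 1)$, becomes a routine binomial expansion.  I expect this identity, rather than the conceptual setup, to be the main computational obstacle: the piecewise-linear nature of $\phi$ and the floor $\lfloor E_\ell \rfloor$ make the sign-and-offset bookkeeping delicate.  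The absolute case is cleaner, since its $t$-exponent is the smooth quadratic $(\ell+\kappa_2-\kappa_1)^2/(2(\tfrac{1}{m_1} + \tfrac{1}{m_2}))$, and matching it against $(m_1+m_2)\binom{n}{2} + K_\ell n$ amounts to a completion of the square.

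Finally, for bijectivity: surjectivity is by the definition of the target.  For injectivity within each graded piece $\CF(\Ls{0},\Ls{m};\underline{\Lambda}) = \bigoplus_{k=0}^{m-1} \underline{\Lambda}_{x_{m,k/m}}$, the $i = 0$ term of $\theta_{m,k}[a]$ is $z^k \cdot a$, which is the unique $t^0$-contribution when $0 \le k < m$; so the summands for different $k$ land in disjoint $z$-residue classes modulo $m$, and on each summand $a$ is recovered as the coefficient of $z^k t^0$.  The same argument works in the absolute case with $\tabs_{m,k}$, where the lowest $t$-power contribution is $t^{k^2/(2m)} z^k \cdot a$.
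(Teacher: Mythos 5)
Your proof is correct and follows essentially the same route as the paper's: both reduce to matching the formal product $\theta_{m_2,k_2}[b]\cdot\theta_{m_1,k_1}[a]$ against the image of \eqref{eq:monomial-product2}, reindex the double sum by $(\ell,n)$ (the paper writes this as $(r,c,d)$ with $\ell = \tfrac{m_1+m_2}{\gcd(m_1,m_2)}d + r$ and $n=c$, but the substitution $i = n - \lfloor E_\ell\rfloor$, $j = n+\ell-\lfloor E_\ell\rfloor$ is identical), and reduce to the same $t$-exponent identity \eqref{eq:claim}, which the paper grinds through and you correctly flag as the computational crux.
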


As in \S\ref{two:theta}, the summands of the $\theta[a]$ are indexed by right triangles.  The factor of $\sigma^i(a)$ plays the same role as the $\nabla \gamma_2(b \nabla \gamma_1(a \nabla(\gamma_0(1)))$ factor in \eqref{eq:175}.
\begin{center}
\begin{tikzpicture}
\tikzset{->-/.style={decoration={ markings,
        mark=at position #1 with {\arrow[scale=1.5,>=stealth]{>}}},postaction={decorate}}}
\draw[black, thick, ->-=.5] (0.1,0)--(1,0);
\draw[black,thick,->-=.5] (1,0)--(0,2);
\draw[black,thick,->-=.5] (0,2)--(0,.1);
\node[below right] at (1,0) {$a \cdot x_{2,1}$};
\node[above right] at (.5,1) {$\sigma^2$};
\node[left] at (0,.75) {$z^{2\cdot2+ 1} = z^{5}$};
\draw[red] (0-.1,-.2)--(0-.1,2.1);
\draw[red] (.4-.1,-.2)--(.4-.1,2.1);
\draw[red] (.8-.1,-.2)--(.8-.1,2.1);
\draw[red] (1.2-.1,-.2)--(1.2-.1,2.1);
\end{tikzpicture}
\end{center}
This triangle contributes $t^9 z^5 \sigma^5(a)$ to the relative version of $\theta_{2,1}[a]$ \eqref{eq:thetamka}, and $t^{6.25} z^5 \sigma^5(a)$ to the absolute version \eqref{eq:thetamka_abs}.
Presumably a ``family Floer'' argument along these lines would prove the Theorem, but we will give a proof in terms of the explicit formulas.

\begin{proof}
Let us give the proof first in the relative case.  Fix $m_1,m_2 \in \bZ_{\geq 0}$, $k_1 \in \{0,\ldots,m_1 - 1\}$, $k_2 \in \{0,\ldots,m_2 - 1\}$, and $a,b \in C[\![t]\!]$.  The product of $\theta_{m_2,k_2}[b]$ and $\theta_{m_1,k_1}[a]$ is by definition
\begin{equation}
\label{eq:i1i2-sum}
\sum_{i_1,i_2 \in \bZ \times \bZ} \sigma^{i_2}(b)\sigma^{i_1}(a) t^{m_2\binom{i_2}{2}  + m_1 \binom{i_1}{2}  + k_2i_2 + k_1i_1} z^{m_2 i_2 + m_1 i_1 +  k_2 + k_1}
\end{equation}
We may also index the sum by triples $(r,c,d)$ where $(c,d) \in \bZ \times \bZ$ and $r \in \left\{0,1,\ldots, \frac{m_1 + m_2}{\mathrm{gcd}(m_1,m_2)}-1\right\}$.  First, for $\ell \in \mathbb{Z}$, we define $d$ and $r$ via 
\[ \ell = \frac{m_1+m_2}{\mathrm{gcd}(m_1,m_2)}d + r \]
It then follows that if we set
\[
e(r) = e_{m_1,m_2,k_1,k_2}(r) :=  \left\lfloor \frac{m_2 r + k_1 + k_2}{m_1 + m_2} \right\rfloor \in \left\{0,1,\ldots, \frac{m_2}{\mathrm{gcd}(m_1,m_2)}-1\right\} 
\]
and $\kappa_1 = k_1/m_1, \kappa_2 = k_2/m_2$, we have 
\begin{equation} \label{eq:eqE} \lfloor E(\kappa_1,\kappa_2+\ell) \rfloor = \frac{m_2 d}{\mathrm{gcd}(m_1,m_2)} + e(r)
\end{equation}
The triple $(r,c,d)$ (and the integer $\ell$) is determined as the unique solution to 
\[ \label{eq:reindex}
\left(
\begin{array}{r}
i_1 \\ i_2
\end{array}
\right) = \left(
\begin{array}{rr}
1 & -m_2/\mathrm{gcd}(m_1,m_2) \\ 
1 & m_1/\mathrm{gcd}(m_1,m_2)
\end{array}
\right)
\left(
\begin{array}{c}
c -e(r) \\ d
\end{array}
\right)+ \left(
\begin{array}{r}
0 \\ r
\end{array}
\right)
\]
To save space in the exponents, let us write $g := \gcd(m_1,m_2)$.  After reindexing the sum \eqref{eq:i1i2-sum} is
\begin{equation}
\label{eq:rcd-sum}
\sum_r \sum_c \sum_d \sigma^{c-e(r) + m_1d/g + r}(b)  \sigma^{c - e(r) - dm_2/g}(a)  t^{\square_1}z^{\square_2}
\end{equation}
where
\begin{equation}
\label{eq:square1}
\begin{array}{rcc}
\square_1 & = & m_1\binom{c-e(r) - dm_2/g}{2}+ k_1(c-e(r) - dm_2/g) \\
& & + m_2\binom{c-e(r) + dm_1/g + r}{2} + k_2(c-e(r) + dm_1/g + r)
\end{array}
\end{equation}
and
\begin{equation}
\label{eq:square2}
\square_2 = m_1(c-e(r) - dm_2/g) + m_2(c-e(r) + dm_1/g + r) + k_1 + k_2
\end{equation}

We note that $\square_2 = (m_1 + m_2)c + (m_2 r + k_1 + k_2) - (m_1+m_2)e(r)$ does not depend on $d$, and furthermore that 
\[
k(r) = k_{m_1,m_2,k_1,k_2}(r) := m_2 r + k_1 + k_2 - (m_1+m_2)e(r)
\]
belongs to $\{0,\ldots,m_1+m_2-1\}$. (Note that $k(r)/(m_1+m_2)$ is the fractional part of $E(\kappa_1,\kappa_2+\ell)$.)
Thus the sum \eqref{eq:rcd-sum} is the same as
\[
\sum_r \sum_c \left(\sum_d \sigma^{c -e(r)+ dm_1/g + r}(b)  \sigma^{c -e(r) - dm_2/g}(a)   t^{\square_1} \right)z^{(m_1 + m_2)c + k(r)}
\]
Since $\sigma$ acts trivially on $t$, this is the same as 
\[
\sum_r \sum_c \sigma^c\left(\sum_d \sigma^{-e(r)+dm_1/g + r}(b)   \sigma^{-e(r) - dm_2/g}(a) t^{\square_1} \right)z^{(m_1 + m_2)c + k(r)}
\]
Now we claim
\begin{equation}
\label{eq:claim}
\square_1 =
\lambda\left(\kappa_1,\kappa_2+\ell \right) + (m_1 + m_2) \binom{c}{2} + k(r) c
\end{equation}
where $\square_1$ is as in \eqref{eq:square1} and $\lambda$ is as in \eqref{eq:Euv-lamuv} and $\ell = \frac{m_1+m_2}{g}d + r$.

Taking \eqref{eq:claim} for granted, we obtain that $\theta_{m_2,k_2}[b]\cdot \theta_{m_1,k_1}[a]$ is equal to
\[
\sum_{r = 0}^{\frac{m_1+m_2}{g}-1} \sum_c \sigma^c\left(\sum_d  \sigma^{-e(r) - dm_2/g}(a) \sigma^{-e(r)+dm_1/g + r}(b) t^{\lambda(\kappa_1, \kappa_2+\ell)} \right)t^{(m_1 + m_2)\binom{c}{2} + k(r) c}z^{(m_1 + m_2)c + k(r)}
\]
which is equal to
\begin{equation}
\label{eq:after-claim}
\sum_r \theta_{m_1+m_2,k(r)}\left[\sum_d \sigma^{-e(r)+dm_1/g + r}(b)  \sigma^{-e(r) - dm_2/g}(a)    t^{\lambda(\kappa_1, \kappa_2+\ell)}\right]
\end{equation}

We now compare \eqref{eq:after-claim} to $(b \cdot x_{m_2,k_2/m_2}) (a \cdot x_{m_1,k_1/m_1})$ which is given by \eqref{eq:monomial-product2}
\[
\sum_{\ell \in \bZ} \left(\sigma^{\ell - \lfloor E(\kappa_1,\kappa_2 + \ell) \rfloor}\!(b) \, \sigma^{-\lfloor E(\kappa_1,\kappa_2 + \ell)\rfloor}\!(a) \,  t^{\lambda(\kappa_1,\kappa_2 + \ell)}\right) x_{m_1+ m_2,E(\kappa_1,\kappa_2 + \ell)}
\]
Writing $\ell = \frac{m_1+m_2}{\mathrm{gcd}(m_1,m_2)} d + r$ and using the formula \eqref{eq:eqE}, we can rewrite this as
\begin{equation}\label{eq:Aside} \sum_r  
\left(\sum_d  \sigma^{-e(r)+dm_1/g + r}(b)  \sigma^{-e(r) - dm_2/g}(a)  t^{\lambda(\kappa_1, \kappa_2+ \ell)}  \right)  x_{m_1+ m_2,k(r)}
\end{equation}
It is now evident that under our map \eqref{eq:ABmap}, the two expressions \eqref{eq:after-claim} and \eqref{eq:Aside} agree.

It remains to verify the claim in \eqref{eq:claim}. This will be a direct computation. We have
\begin{align*} &\lambda(\kappa_1,\kappa_2+ \ell) = m_1 \phi(\kappa_1) + m_2 \phi(\kappa_2 + \ell) - (m_1+m_2) \phi( \frac{m_2d}{g}+e(r) + \frac{k(r)}{m_1+m_2})  \\
&= m_2 ( \ell(\ell+\kappa_2) - \frac{\ell(\ell+1)}{2} ) \\
& - (m_1+m_2) \left((\frac{dm_2}{g}+e(r))(\frac{dm_2}{g}+e(r) + \frac{k(r)}{m_1+m_2}) - \frac{(\frac{dm_2}{g}+e(r))(\frac{dm_2 }{g}+e(r)+1)}{2}) \right) \\
&= m_2 ( \ell \kappa_2 + \frac{\ell(\ell-1)}{2}) - (\frac{dm_2 }{g}+e(r)) k(r) - (m_1+m_2) \frac{(\frac{dm_2 }{g}+e(r)) (\frac{dm_2 }{g}+e(r)-1)}{2}  \\
&= m_2 \kappa_2 (\frac{d(m_1+m_2)}{g} +r) + \frac{m_2 (\frac{d(m_1+m_2)}{g}+r) ( \frac{d(m_1+m_2)}{g}+r-1)}{2} \\ &- (\frac{dm_2 }{g}+e(r)) k(r) - (m_1+m_2) \frac{(\frac{d m_2 }{g}+e(r)) (\frac{d m_2 }{g}+e(r)-1)}{2}
\end{align*}
Substituting in $k(r) = m_2 r + k_1 + k_2 - (m_1+m_2)e(r)$, we get
\begin{align*} \lambda(\kappa_1,\kappa_2+\ell) &= m_2 \kappa_2 (\frac{d(m_1+m_2)}{g} +r) + \frac{m_2 (\frac{d(m_1+m_2)}{g}+r) ( \frac{d(m_1+m_2)}{g}+r-1)}{2} \\ &- (\frac{dm_2}{g}+e(r)) (m_1\kappa_1 + m_2 \kappa_2 + m_2r - e(r)(m_1+m_2)) \\ &- (m_1+m_2) \frac{ (\frac{dm_2 }{g}+e(r)) (\frac{dm_2}{g}+e(r)-1)}{2}
\\ &= \frac{(m_1+m_2) m_1 m_2}{2 g^2} d^2 + \frac{m_1 m_2}{g}  (r+ \kappa_2 -\kappa_1) d \\ &+ m_2 \kappa_2 r + \frac{m_2 r (r-1)}{2} + \frac{(m_1+m_2) e(r)(e(r)+1)}{2} - (m_1\kappa_1 +m_2 \kappa_2 + m_2 r)e(r)  \end{align*}
We can rewrite this in a symmetric form as follows:
\begin{align*} \lambda(\kappa_1,\kappa_2+\ell) &= \frac{(m_1+m_2) m_1 m_2}{2 g^2} d^2 + \frac{m_1 m_2}{g}  (r+ \kappa_2 -\kappa_1) d \\ &+ \frac{m_2(e(r)-r)(e(r)-r+1)}{2} + \frac{m_1e(r)(e(r)+1)}{2} - k_2(e(r)-r) - k_1e(r)
\end{align*}
and this in turn can be seen to be equal to 
\begin{align*}
 \lambda(\kappa_1,\kappa_2+\ell) &=  m_1\binom{-e(r) - m_2d/g}{2}+ k_1(-e(r) - m_2d/g) \\ &+ m_2\binom{-e(r) + m_1d/g + r}{2} + k_2(-e(r) + m_1d/g + r)
\end{align*}
This completes the proof of the claim \eqref{eq:claim} and hence the proof of the theorem in the relative case.

In the absolute case, the product of $\tabs_{m_2,k_2}[b]$ and $\tabs_{m_1,k_1}[a]$ is given by 
\begin{equation}
\label{eq:i1i2-abssum}
\sum_{i_1,i_2 \in \bZ \times \bZ} \sigma^{i_2}(b)\sigma^{i_1}(a) t^{\frac{(m_2 i_2 + k_2)^2}{2m_2} + \frac{(m_1i_1 +k_1)^2}{2m_1}} z^{m_2 i_2 + m_1 i_1 +  k_2 + k_1}
\end{equation}
Performing the same re-indexing using \eqref{eq:reindex}, we arrive at  
\begin{equation}
\label{eq:rcd-sumabs}
\sum_r \sum_c \sum_d \sigma^{c-e(r) + m_1d/g + r}(b)  \sigma^{c - e(r) - dm_2/g}(a)  t^{\square^{abs}_1}z^{\square^{abs}_2}
\end{equation}
where
\begin{equation}
\label{eq:square1abs}
\square^{abs}_1 = \frac{(m_1(c-e(r) - dm_2/g) + k_1)^2}{2m_1} + \frac{(m_2(c-e(r) + dm_1/g + r) + k_2)^2}{2m_2} 
\end{equation}
and $\square_2^{abs}=\square_2$ given as before by \eqref{eq:square2}. 
Following the same steps, the only difference in the calculation is the verification of the analogue of equation \eqref{eq:claim} which now takes the form:
\begin{equation}
\label{eq:claimabs}
\square^{abs}_1 =
\frac{(\ell+\kappa_2-\kappa_1)^2 m_1 m_2}{2(m_1+m_2)}  + \frac{((m_1 + m_2)c + k(r))^2}{2(m_1+m_2)}
\end{equation}
Recalling that $\ell = (m_1+m_2)d/g + r$,  $k(r) = m_2 r + k_1 + k_2 - (m_1+m_2)e(r)$ and $\kappa_i = k_i/m_i$ for $i=1,2$, we can compare the equations \eqref{eq:square1abs} and \eqref{eq:claimabs} directly to verify the claim. This completes the proof in the absolute case.
\end{proof}

\section{Specializing the Novikov parameter}

\subsection{At $t = 0$}
\label{subsec:t=0}  The specialization $t = 0$ renders uninteresting the absolute version of the maps $\mu_n$, at least if we also set $t^a = 0$ for every $a>0$.  But it is a standard part of relative Floer theory.  In fact it is part of the motivation for relative Floer theory --- in any sum over triangles (say), the contribution from triangles which are not disjoint from $D$ vanishes, so that working with $t = 0$ is closely related to replacing the closed symplectic manifold $T$ with the open $T -D$.  See \cite[\S 6.1]{LP} for some more context.

For short, let us write $S_n$ for the $n$th graded piece of \eqref{eq:181}.  Let us also put $S_0 := C^{\sigma}[\![t]\!]$ --- here $C^{\sigma}$ denotes the $\sigma$-fixed subring of $C$.  Then $S_{\bullet}$ is a graded $C^{\sigma}[\![t]\!]$-algebra --- it is associative and commutative by Theorem \ref{intro:thm1}, \S\ref{three:last}.  If $C$ is a perfect field and $\sigma$ is the $p$th root map, then $C^{\sigma} = \bF_p$.  In any case there is an isomorphism in the category of $C^{\sigma}$-schemes
\[
\Proj(S \times_{C^{\sigma}[\![t]\!]} C^{\sigma}) = \mathrm{colim}\left[
\begin{tikzcd}
\Spec(C) \ar[r,shift left=.75ex,"i_0 \circ \sigma"]
  \ar[r,shift right=.75ex,swap,"i_{\infty}"]
&
\bP^1_{/C}
\end{tikzcd}
\right]
\]
where $i_0$ and $i_{\infty}$ are the inclusions of $C$-schemes $\Spec(C) \to \bP^1_{/C}$ with coordinates $0$ and $\infty$, respectively.

If $C$ is a field then $\Proj(S \times_{S_0} C^{\sigma})$ is a one-dimensional scheme, which can be covered by two affine charts.  It fails to be regular at a unique point and the complement of this point is isomorphic to $\Spec(C[x,x^{-1}])$.  For the other chart take the complement of any other point --- one obtains an affine Zariski neighborhood of the non-regular point that is isomorphic to the spectrum of a subring of $C[y]$, namely
\[
\{f \in C[y] : \sigma(f(0)) = f(1)\}
\]
This ring is in some sense an order in a Dedekind domain but  if $C$ has infinite degree over $C^{\sigma}$ then it is not of finite type.

\subsection{Floer cochains at $t = 1$}
Let $C$ and $\sigma$ be as in \S\ref{three:Ffield}, with $\underline{C}$ pulled back along $\mathfrak{f}$ from the sheaf whose \'etal\'e space is \eqref{eq:etale-space}.  If $L$ and $L'$ are one-dimensional submanifolds that intersect transversely, we will write (similar to \eqref{eq:CFLLp})
\begin{equation}
\label{eq:501}
\CF(L,L';\underline{C}) = \bigoplus_{x \in L \cap L'} \underline{C}_x
\end{equation}
This supports a $\bZ/2$-grading and a bigon differential
\begin{equation}
\label{eq:501p}
\mu_1(x \cdot a) = \sum_{y | \mas(y) = \mas(x)+1} y \left(\sum_{u \in \cM(y,x)} \pm \nabla \gamma'\left(a \nabla \gamma(1_{\underline{\Lambda_y}})\right) \right)
\end{equation}
with $\gamma$ and $\gamma'$ as in \eqref{eq:423}.  \eqref{eq:501p} is a finite sum

If we further endow $C$ with a topology, for which $\sigma$ is continuous, we can investigate the algebraic structures on \eqref{eq:501} induced by \eqref{eq:441}.  That is, we study the sums 
\begin{equation}
\label{eq:502}
\sum_y y \left(\sum_{u} \pm \nabla \gamma_n(a_n \nabla \gamma_{n-1}(a_{n-1} \cdots \nabla \gamma_2(a_2 \nabla(\gamma_1(a_1 \nabla \gamma_0(1))\cdots)))) \right)
\end{equation}
\eqref{eq:501p} and \eqref{eq:502} are simply the specializations one obtains by setting $t$ and every power $t^a$ to $1$ in the formulas from \S\ref{three}.
The sums $\sum_u$ in \eqref{eq:502} might diverge or converge in the topological ring $C$, so that at best the map
\begin{equation}
\label{eq:503}
\CF(L_{n-1},L_n;\underline{C}) \times \cdots \times \CF(L_0,L_1;\underline{C}) \dashrightarrow \CF(L_0,L_n;\underline{C})
\end{equation}
is only partially defined.  In many cases, the domain of convergence is reduced to a point, but we will see that the triangle maps are not trivial.

\subsection{The triangle products at $t = 1$}
Suppose that $C$ is complete with respect to a nonarchimedean norm $|\cdot|$, and that
\[
\sigma(c) = |c|^{1/p}
\]
for some $p > 1$.  With $p$ prime and $C,\sigma$ as in \eqref{eq:192},  the pair $(C,|\cdot|)$ is a perfectoid field of characteristic $p$ \cite[\S 3]{Scholze}.  The maps $\nabla \gamma$ for the sheaf of rings $\underline{C}$ are continuous but (crucially) they d not preserve the norms.

Write 
\[
\cO_C := \{c \in C : |c|\leq 1\} \qquad \mathfrak{m}_C:=\{c \in C:|c|<1\};
\]
then $\cO_C$ is the ring of integers in $C$ and $\mathfrak{m}$ is the unique maximal ideal of $\cO_C$.  They are both stable by the $\sigma$-action so that they determine locally constant subsheaves of $\underline{C}$ that we denote by $\underline{\cO}_C$ and $\underline{\mathfrak{m}}_C$.  The fiber of $\underline{\mathfrak{m}}$ at $x$ is the set of topologically nilpotent elements in $\underline{C}_x$.

Following the notation of \eqref{eq:501} set
\begin{equation}
\CF(L,L';\underline{\mathfrak{m}}):=\bigoplus_{x \in L \cap L'} \underline{\mathfrak{m}}_x
\end{equation}
It is an open subgroup of $\CF(L,L';\underline{C})$.

Suppose $L_0$, $L_1$, and $L_2$ are special of finite slopes $m_0$, $m_1$, and $m_2$, in the sense of \S\ref{ex:sl}.  
The contribution of a triangle with vertices
\[
x_1 \in L_0 \cap L_1 \qquad x_2 \in L_1 \cap L_2 \qquad y \in L_2 \cap L_0
\]
to $\mu_2(x_2 \cdot b, x_1 \cdot a)$ has the form \eqref{eq:364p} $\sigma^{\mathfrak{f}(\gamma_2)}(b) \sigma^{-\mathfrak{f}(\gamma_0)}(a)$, where $\gamma_0$ and $\gamma_2$ are the two edges of $u$ incident with the output vertex $y$.  If (and only if) $m_0 < m_1 < m_2$, then $\mathfrak{f}(\gamma_2)$ and $\mathfrak{f}(\gamma_0)$ all have the same sign --- with perhaps finitely many exceptions where one of $\mathfrak{f}(\gamma_0)$ and $\mathfrak{f}(\gamma_2)$ are zero --- so that when $|a|< 1$ and $|b|<1$
\[
|\sigma^{\mathfrak{f}(\gamma_2)}(b) \sigma^{-\mathfrak{f}(\gamma_0)}(a)| = |b|^{p^{-\mathfrak{f}(\gamma_2)}}|a|^{p^{\mathfrak{f}(\gamma_0)}}
\]
is very rapidly decreasing as the side lengths of the triangles go to infinity.  The triangle product
\[
\mu_2:\CF(L_1,L_2;\underline{\mathfrak{m}}) \times \CF(L_0,L_1;\underline{\mathfrak{m}}) \to \CF(L_0,L_2;\underline{\mathfrak{m}})
\]
is therefore convergent when $m_0 < m_1 < m_2$.  In particular we have a graded ring (for now, without unit)
\begin{equation}
\label{eq:432p}
\bigoplus_{m = 1}^{\infty} \CF(\Ls{0},\Ls{m};\underline{\mathfrak{m}})
\end{equation}

\subsection{The irrelevant ideal in the Fargues-Fontaine graded ring}
Let $C$ be an algebraically closed field of characteristic $p$ that is complete with respect to a norm $|\cdot|$.  Let $B$ and $\varphi$ be as in \S\ref{intro:FFgr}, i.e.
\begin{equation}
\label{eq:promoter}
B = \left\{\sum_{i \in \bZ} b_i z^i \mid \forall r \in (0,1), |b_i|r^i \to 0\text{ as }|i| \to \infty\right\}
\end{equation}
This appears in \cite[Def. 21]{KS} and in \cite[Ex. 1.6.5]{FF}.  Fargues and Fontaine define a version of $B$ for every local field $E$, \eqref{eq:promoter} is the case when $E = \bF_p(\!(z)\!)$.  Below, we are taking advantage of the fact that when $E$ has equal characteristic each element of $B$ has a unique series expansion, something that is not clear when $E$ has mixed characteristic \cite[Rem. 1.6.7]{FF}.

Let $\varphi$ be as in \eqref{eq:phiBB}, i.e. the automorphism of $B$ given by $\varphi(\sum c_i z^i) = \sum c_i^p z^i$.  The homogeneous coordinate ring of $\FF_E(C)$ is
\begin{equation}
\label{eq:dune}
\bF_p(\!(z)\!) \oplus B^{\varphi = z} \oplus B^{\varphi = z^2}  \oplus \cdots
\end{equation}
We will prove the theorem of \S\ref{intro:FFgr}, i.e. that \eqref{eq:432p} is isomorphic to the irrelevant ideal of this ring.

\begin{proof}[Proof of \eqref{eq:feste}]
Suppose that $a \in C$ has $|a| < 1$.  Then the sequence $|\sigma^i(a)| = |a|^{p^{-i}}$ of real numbers is bounded as $i \to \infty$ and very rapidly decreasing as $i \to -\infty$, and $r^{mi+k} |a|^{p^{-i}} \to 0$ as $|i| \to \infty$ for any $r \in (0,1)$.  Therefore for any $m$ and $k$ the expression
\begin{equation}
\label{eq:roux}
\sum_{i \in \bZ} z^{mi+k} \sigma^i(a)
\end{equation}
belongs to $B$ \eqref{eq:FFKS}.  Applying $\varphi$ to \eqref{eq:roux} gives $\sum_{i \in \bZ} z^{mi+k} \sigma^{i-1}(a)$ --- re-indexing this series gives
\[
\sum_{i \in \bZ} z^{m(i+1) + k} \sigma^i(a) = z^m \sum_{i \in \bZ} z^{mi + k} \sigma^i(a)
\]
so that \eqref{eq:roux} belongs to $B^{\varphi = z^m}$.  But \eqref{eq:roux} is $\theta_{m,k}[a]\vert_{t = 1}$, so that by \S\ref{three:last} the map
\begin{equation}
\label{eq:boveri}
x_{m,k/m} \cdot a \mapsto \theta_{m,k}[a]\vert_{t = 1}
\end{equation}
intertwines $\mu_2$ with the ring structure on $B$.

If $f = \sum b_i z^i$ belongs to $B^{\varphi = z^m}$, then $b_{mi+k} = \sigma^m(b_k)$, so $f$ is determined by $b_0,\ldots,b_{m-1}$.  To obey \eqref{eq:FFKS}, the elements $b_0,\ldots,b_{m-1}$ must all belong to $\mathfrak{m}$.  The map
\[
f \mapsto \sum_{k = 0}^{m-1} x_{m,k/m} \cdot b_k
\]
gives the inverse isomorphism to $\CF(\Ls{0},\Ls{m};\underline{\mathfrak{m}}) \cong B^{\varphi = z^m}$.
\end{proof}

\subsection{SYZ duality}
\label{subsec:syz}
The degree one part of \eqref{eq:feste} is an isomorphism
\begin{equation}
\label{eq:hart}
\CF(\Ls{0},\Ls{1};\underline{\mathfrak{m}}) \cong \Hom_{\FF}(\cO,\cO(1))
\end{equation}
where $\cO(1)$ is the Serre line bundle on \eqref{eq:FF-curve}.  In general it seems that $\CF(L,L';\underline{\mathfrak{m}})$ captures the set of homomorphisms between two vector bundles on $\FF$ whenever $L$ and $L'$ are (or are just isotopic to, if we replace $\CF$ by $\HF$) special Lagrangians \S\ref{ex:sl} of finite slopes $m$ and $m'$ with $m$ strictly less than $m'$.  But for other kinds of homomorphisms or Ext groups in $\Coh(\FF)$, another construction must be necessary --- one that we only partially understand.  In the next two sections \S\ref{subsec:sky} and \S\ref{subsec:ore} we illustrate this in terms of skyscraper sheaves on $\FF$.

The closed points of $\FF_E(C)$ are naturally parametrized by the $\bZ$-orbits of $E$-untilts of the perfectoid field $C$.  When $E = \bF_p(\!(z)\!)$, an ``$E$-untilt'' is just a continuous homomorphism $i:E \to C$ ---  such a homomorphism must carry $z$ to a nonzero element of $\mathfrak{m}$ and conversely every nonzero element of $\mathfrak{m}$ extends to a map from $\bF_p(\!(z)\!)$, the $\bZ$-action is generated by $i \mapsto \sigma \circ i$.  There is a map
\begin{equation}
\label{eq:SYZ}
\left(\text{closed points of }\FF(E,C)\right) \to \bR/\bZ
\end{equation}
It is defined for any $E$.  When $E = \bF_p(\!(z)\!)$ it carries the $\bZ$-orbit of $\iota:\bF_p(\!(z)\!) \to C$ to the $\bZ$-coset of $\log_p(\log(|i(z)|^{-1}))$.  We expect that \eqref{eq:SYZ} is the SYZ dual to \eqref{eq:191}, and that the skyscraper sheaves have something to do with fibers of \eqref{eq:191}.

\subsection{Skyscraper sheaves and $\Ls{\infty}$}
\label{subsec:sky}
If $\zeta \in C$ is invertible, let us denote by $\Ls{\infty}^{\zeta}$ the special Lagrangian $\Ls{\infty}$ equipped with the rank one local system of $\underline{C}\vert_{\Ls{\infty}}$-modules (i.e., a local system of $C$-modules) whose fiber at $(0,0)$ is $\underline{C}_{(0,0)} = C$ and whose monodromy (in the direction of the default orientation, top to bottom) is multiplication by $\zeta$.  Let $e_m$ denote $(0,0)$ regarded as the unique intersection point of $\Ls{m}$ and $\Ls{\infty}$, so that 
\begin{equation}
\label{eq:pasteur}
\CF(\Ls{m},\Ls{\infty}^{\zeta};\underline{C}) = \CF^0(\Ls{m},\Ls{\infty}^{\zeta};\underline{C}) =  e_m \cdot C.
\end{equation}

If $C$ is algebraically closed then one also has $\Hom_{\FF}(\cO(m),\delta) \cong C$ for any skyscraper sheaf $\delta$.  If $|\zeta| < 1$ and $\delta$ is the skyscraper sheaf supported at the $\bZ$-orbit of the untilt $\bF_p(\!(z)\!) \to C$, then we expect that for any surjection $q:\cO(1) \to \delta$, there is an isomorphism making the diagram
\begin{equation}
\label{eq:only-square}
\begin{aligned}
\xymatrix{
\CF(\Ls{0},\Ls{1};\underline{\mathfrak{m}}) \ar[d]_{\eqref{eq:hart}} \ar[rr]^{\mu_2(e_1 \cdot 1,-)} & & \CF(\Ls{0},\Ls{\infty}^\zeta;\underline{C}) \ar@{-->}[d] \\
\Hom(\cO,\cO(1)) \ar[rr]_{q \circ} & & \Hom(\cO,\delta)
}
\end{aligned}
\end{equation}
commute; instead of constructing this isomorphism here let us verify that the two rows of \eqref{eq:only-square} have the same kernel.  We may find $i:\cO \to \cO(1)$ such that 
\[
0 \to \cO \xrightarrow{i} \cO(1) \xrightarrow{q} \delta \to 0
\]
is exact, so that the kernel of the bottom row in \eqref{eq:only-square} is isomorphic to $\Hom(\cO,\cO) = \bF_p(\!(z)\!)$, the ground field of \eqref{eq:FF-curve}.  We will show that the kernel of $\mu_2(e_1 \cdot 1,-)$ has the structure of a one-dimensional $\bF_p(\!(z)\!)$-module.

In general the triangle map $\mu_2(e_1 \cdot b, x_{1,0} \cdot a)$ \eqref{eq:503} is given by
\begin{equation}
\label{eq:melanogaster}
\begin{aligned}
\begin{tikzpicture}
\tikzset{->-/.style={decoration={ markings,
        mark=at position #1 with {\arrow[scale=1.5,>=stealth]{>}}},postaction={decorate}}}
\node at (-4,1) {$e_0 \cdot \left(\sum_{i \in \bZ} (-1)^{3i} b \zeta^{i} \sigma^i(a)\right)$};
\draw[black, thick, ->-=.5] (.1,0)--(2,0);
\draw[black,thick,->-=.5] (2,0)--(0,2);
\draw[black,thick,->-=.5] (0,2)--(0,.1);
\node[right] at (2,0) {$x_{1,0} \cdot a$};
\node[below, left] at (0,2) {$e_1 \cdot b$};
\node[above right] at (1,1) {$\sigma^{5}$};
\node[left] at (0,.75) {$\zeta^{5}$};
\draw[red] (0-.15,-.2)--(0-.1,2.1);
\draw[red] (.4-.15,-.2)--(.4-.1,2.1);
\draw[red] (.8-.15,-.2)--(.8-.1,2.1);
\draw[red] (1.2-.15,-.2)--(1.2-.1,2.1);
\draw[red] (1.6-.15,-.2)--(1.6-.1,2.1);
\draw[red] (2-.15,-.2)--(2-.1,2.1);
\end{tikzpicture}
\end{aligned}
\end{equation}
with the figure at the right illustrating the triangle that contributes the $i = 5$ term (for the sign, see \S\ref{sorp}).  This is just $b \cdot \theta_{1,0}[a]$ at $t = 1$ and $z = -\zeta$, it converges whenever $|\zeta|$ and $|a|$ are both less than one.

Thus the top row of \eqref{eq:only-square} is isomorphic to the map $\mathfrak{m} \to C$ sending $a$ to $\vartheta(a) := \sum_{n \in \bZ} (-\zeta)^n a^{p^{-n}}$.  This map obeys
\[
\vartheta(a^p) = (-\zeta) \vartheta(a)
\]
i.e. it intertwines the $\bF_p(\!(z)\!)$-module structure on $C$ given by the homomorphism $z \mapsto -\zeta$ with the $\bF_p(\!(z)\!)$-module structure on $\mathfrak{m}$ given by $(z,a) \mapsto a^p$.  The kernel is therefore an $\bF_p(\!(z)\!)$-module.  The image of this kernel under the isomorphism $\mathfrak{m} \cong B^{\varphi = z}$ (given by $a \mapsto \sum a^{p^i} z^{-i}$  \eqref{eq:feste}) is the set of $b \in B^{\varphi = z}$ whose set of zeroes is exactly $\{(-\zeta)^{p^n}\}_{n \in \bZ}$.  The function
\[
h(z) = \left(\sum_{i \in \bZ} a^{p^i} z^{-i}\right)\left(\prod_{n = 0}^{\infty} (1 + \zeta^{p^n}/z)\right)^{-1}
\]
(the meromorphic part of the Weierstrass factorization \cite[Ch. 2]{FF}) belongs to $C(\!(z)\!)$ and obeys the functional equation $h(z^{1/p})^p = (\zeta + z) h(z)$, i.e. its coefficients obey the recursion
\begin{equation}
\label{eq:hnzhn}
h_n^p - \zeta h_n = h_{n-1}
\end{equation}
---for each $h_{n-1}$ there are exactly $p$ solutions in $h_n$ to \eqref{eq:hnzhn}, so the set of such $h(z)$ is a one-dimensional $\bF_p(\!(z)\!)$-submodule of $C(\!(z)\!)$.

\subsection{Ore adjoint}
\label{subsec:ore}
Let $\Ls{\infty}^{\zeta}$ be as in \S\ref{subsec:sky}.  If we swap the order of $\Ls{\infty}$ and $\Ls{m}$ in \eqref{eq:pasteur}, the Maslov index of the intersection point is $1$, so that
\[
\CF(\Ls{\infty}^{\zeta},\Ls{m};\underline{C}) = \CF^1(\Ls{\infty}^{\zeta},\Ls{m};\underline{C}) =  e_m \cdot C
\]
The triangle sum
\[
\CF^1(\Ls{\infty}^{\zeta},\Ls{0};\underline{C}) \times \CF^0(\Ls{1},\Ls{\infty}^{\zeta};\underline{C}) \dashrightarrow \CF^1(\Ls{1},\Ls{0};\underline{C})
\]
is formally given by
\begin{equation}
\label{eq:hosono}
\sum_{n \in \bZ} (-1)^{3n} \sigma^{-n}(b \zeta^n a) = \sum_{n \in \bZ} (-\zeta)^{n p^n}{(ab)^{p^n}}
\end{equation}
It is the same triangles as \eqref{eq:melanogaster} that contribute to \eqref{eq:hosono}, but they are decorated differently.  For instance the triangle contributing the $n = 5$ summand is
\begin{center}
\begin{tikzpicture}
\tikzset{->-/.style={decoration={ markings,
        mark=at position #1 with {\arrow[scale=1.5,>=stealth]{>}}},postaction={decorate}}}
\draw[black, thick, ->-=.5] (0,0)--(1.9,0);
\draw[black,thick,->-=.5] (1.9,.1)--(0,2);
\draw[black,thick,->-=.5] (0,2)--(0,0);
\node[above, left] at (0,2) {$e_1 \cdot a$};
\node[below, left] at (0,0) {$e_0 \cdot b$};
\node[below] at (1,0) {$\sigma^{-5}$};
\node[left] at (0,.75) {$\zeta^{5}$};
\draw[red] (0-.15,-.2)--(0-.1,2.1);
\draw[red] (.4-.15,-.2)--(.4-.1,2.1);
\draw[red] (.8-.15,-.2)--(.8-.1,2.1);
\draw[red] (1.2-.15,-.2)--(1.2-.1,2.1);
\draw[red] (1.6-.15,-.2)--(1.6-.1,2.1);
\draw[red] (2-.15,-.2)--(2-.1,2.1);
\end{tikzpicture}
\end{center}
Even if $|\zeta| < 1$, the $n \to -\infty$ tail of \eqref{eq:hosono} does not converge unless $ab = 0$.  Even so, it is interesting in a formal way.  In \cite{Poonen},  Poonen following \cite{Ore} attaches to each series of the form $f(a) = \sum u_n a^{p^n}$ an ``adjoint'' series $f^{\dagger}(a) := \sum u_{-n}^{p^n} a^{p^n}$ --- let us call it the Ore adjoint.  Evidently \eqref{eq:hosono} is exactly $\vartheta^\dagger[ba]$.

Under some hypotheses on $f$ Poonen shows that the kernels of $f$ and $f^\dagger$ are Pontrjagin dual to each other in a canonical fashion.  These hypotheses are not satisfied by $\vartheta(a)$, but as $\ker(\vartheta)$ (being the additive group of a local field) is Pontrjagin self-dual, and as $\vartheta^\dagger$ does not converge in any case, we are perhaps free to speculate that ``$\ker(\vartheta^\dagger)$'' is somehow morally isomorphic to $\bF_p(\!(z)\!)$.  This speculation is consistent with mirror symmetry: on the Fargues-Fontaine curve there indeed is a short exact sequence
\begin{equation}
\label{eq:ore-ses}
0 \to \Hom(\cO(1),\cO(1)) \to \Hom(\cO(1),\delta) \to \Ext^1(\cO(1),\cO) \to 0
\end{equation}
coming from the resolution $\cO \to \cO(1)$ of the skyscraper sheaf $\delta$, and the vanishing of $\Ext^1(\cO(1),\cO(1)) = H^1(\FF;\cO)$.  The kernel of \eqref{eq:ore-ses} is naturally isomorphic to $H^0(\FF;\cO)$, i.e. to $\bF_p(\!(z)\!)$.  The middle group is isomorphic to $C$ and to $\HF^0(\Ls{1},\Ls{\infty}^\zeta;\underline{C})$.  But we emphasize that $\Ext^1(\cO(1),\cO)$ is not isomorphic to $\HF(\Ls{1},\Ls{0};\underline{C})$, nor to any open subgroup of it.   

\subsection{Loud Floer cochains on $\Ls{0}$}
\label{subsec:49}
Let $\{\phi^s\}_{s \in \bR}$ be as in \eqref{eq:phis}:
\begin{equation}
\phi^s(x,y) = (x,y - s \cos(2\pi x))
\end{equation}
We can try to compare $\CF(L,L';\underline{C})$ and $\CF(\phi^s L,L';\underline{C})$ by specializing to $t = 1$ in \eqref{eq:cont-twist}.  In some cases, for instance if $L$ and $L'$ are parallel to $\Ls{0}$ as in \S\ref{subsec:ubiquinone}, the summation \eqref{eq:cont-twist} is finite and defines a map
\[
\CF(L,L';\underline{C}) \to \CF(\phi^s L,L';\underline{C})
\]
without any problems.  But this map is not always a quasi-isomorphism.  The series defining the homotopy \eqref{eq:delta-twist} may not converge at $t = 1$ --- \eqref{eq:dhap} is a vivid example of this.  

We will analyze the continuation maps between the groups $\CF(\phi^s \Ls{0},\Ls{0};\underline{C})$.  If $n$ is an integer and $n < s < n+1$, then $\Ls{0}$ meets $\phi^s \Ls{0}$ in $2n+2$ points.  In the fundamental domain $[0,1] \times [0,1]$, half of them have $x$-coordinate $<0.5$ and half of them have $x$-coordinate $>0.5$.  They are linearly ordered by the $x$-coordinate and after listing them in that order we will name them
\[
z_{n}^{(s)},\ldots,z_{-n}^{(s)},\xi_{-n}^{(s)},\ldots,\xi_{n}^{(s)}
\]
More explicitly, $z_i^{(s)}$ and $\xi_i^{(s)}$ are the two solutions to 
$
i - s \cos(2 \pi x) = 0
$, i.e. for a suitable branch of the inverse cosine function:
\[
z_i^{(s)} = \frac{1}{2\pi} \arccos(i/s) \qquad \xi_i^{(s)} = 1- \frac{1}{2\pi} \arccos(i/s)
\]
The case $1 < s < 2$ is shown in the diagram, along with orientations and stars:
\begin{center}
\begin{tikzpicture}[scale=0.7]
      \tikzset{->-/.style={decoration={ markings,
        mark=at position #1 with {\arrow[scale=2,>=stealth]{>}}},postaction={decorate}}}
         \draw (0,-1)--(6,-1)--(6,6-1)--(0,6-1)--(0,-1);
         \draw[blue, ->-=.5]  (0,0) -- (6,0);
         \draw[domain=0:34.56,smooth,variable=\x,blue] plot ({\x/60},{-8.5*cos(\x)+12});
         \draw[domain=34.56:83.24,smooth,variable=\x,blue] plot ({\x/60},{-8.5*cos(\x)+6});
         \draw[domain=83.24:126.02,smooth,variable=\x,blue] plot ({\x/60},{-8.5*cos(\x)});
         \draw[domain=126.02:234,smooth,variable=\x,blue, ->-=.4] plot ({\x/60},{-8.5*cos(\x)-6});
          \draw[domain=234:276.76,smooth,variable=\x,blue] plot ({\x/60},{-8.5*cos(\x)});
          \draw[domain=276.76:325.44,smooth,variable=\x,blue] plot ({\x/60},{-8.5*cos(\x)+6});
          \draw[domain=325.44:360,smooth,variable=\x,blue] plot ({\x/60},{-8.5*cos(\x)+12}); 
         \node at (3.5,0.05) {$\star$};
         \node at (3.3,2.15) {$\star$};
         \draw[red] (5.75,-1)--(5.75,5);
         \node[below] at (34.56/60-.1,-1) {\tiny $z_{1}^{(s)}$};
         \node[below] at (83.24/60,-1) {\tiny $z_0^{(s)}$};
         \node[below] at (126/60+.1,-1) {\tiny $z_{-1}^{(s)}$};
         \node[below] at (6-34.56/60+.1,-1) {\tiny $\xi_{1}^{(s)}$};
         \node[below] at (6-83.24/60+.1,-1) {\tiny $\xi_0^{(s)}$};
         \node[below] at (6-126/60-.05,-1) {\tiny $\xi_{-1}^{(s)}$};
\end{tikzpicture}
\end{center}
The rules of \S\ref{subsec:mas} give each point $z_i^{(s)}$ the Maslov index $0$ and each $\xi_i^{(s)}$ the Maslov index $1$, so that 
\[
\CF^0(\phi^s \Ls{0},\Ls{0};\underline{C}) = \bigoplus_{i = -n}^n z_i^{(s)}\cdot C \qquad \CF^1(\phi^s \Ls{0},\Ls{0};\underline{C}) = \bigoplus_{i = -n}^n \xi_i^{(s)} \cdot C 
\]
Each $\xi_i^{(s)}$ is the output vertex of exactly two bigons, and the other vertex of both bigons in $z_i^{(s)}$.  There is an ``upward'' bigon whose boundary passes through $(0.5,s)+\bZ^2$ and a ``downward'' on whose boundary passes through $(0,-s)+\bZ^2$.  If one places a star at (or close to, as in the figure above) $(0.5,s)+\bZ^2$ and $(0,-s)+\bZ^2$, then the sign of every downward bigon is $1$ and the sign of every upward bigon is $-1$.  The downward bigons cross the danger line exactly once, and the upward bigons exactly never, so that
\[
\mu_1(z_i^{(s)} \cdot a) = \xi_i^{(s)} \cdot (-a + \sigma(a) ) 
\]
with the upward bigon contributing $-a$ and the downward bigon contributing $\sigma(a)$.

If $s' > s$ then for a suitable choice of profile function $\beta$ (on that is very close to the ``linear cascades'' limit considered in \cite{Auroux}), the continuation map
\begin{equation}
\label{eq:landsteiner}
\CF(\phi^s \Ls{0},\Ls{0};\underline{C}) \to \CF(\phi^{s'} \Ls{0},\Ls{0};\underline{C})
\end{equation}
simply sends $z_i^{(s)} \cdot a$ to $z_i^{(s')} \cdot a$ and $\xi_i^{(s)} \cdot a$ to $\xi_i^{(s')} \cdot a$.  In particular it defines a filtered diagram of cochain complexes (indexed by $s > 0$, $s \notin \bZ$, with respect to the usual ordering of real numbers $s$.  Let $\CF_{\mathrm{loud}}(\Ls{0},\Ls{0})$ denote the direct limit of this diagram 
\[
\CF_{\mathrm{loud}}(\Ls{0},\Ls{0};\underline{C}) := \varinjlim_{s > 0| s \notin \bZ}  \CF(\phi^s \Ls{0},\Ls{0};\underline{C})
\]
Since each map \eqref{eq:landsteiner} is the inclusion of a direct summand of cochain complexes, $\CF_{\mathrm{loud}}$ is a model for the homotopy colimit of cochain complexes as well.  Explicitly,
\begin{equation}
\label{eq:cfloud-mu1}
\CF_{\mathrm{loud}}^0 = \bigoplus_{i \in \bZ} z_i \cdot  C \qquad \CF_{\mathrm{loud}}^1 = \bigoplus_{i \in \bZ}  \xi_{i} \cdot C \qquad \mu_1(z_i \cdot a) = \xi_i \cdot (-a+\sigma(a))
\end{equation}

\subsection{Triangles between the $\phi^s \Ls{0}$}
\label{subsec:penultimate}
Continuing with the notation of \S\ref{subsec:49}, let us suppose that none of $s$, $s'$, and $s+s'$ are in $\bZ$, and describe the triangles between $\Ls{0},\phi^{s} \Ls{0}$, and $\phi^{s+s'}\Ls{0}$.  The ``output'' corners of these triangles are
\[z_i^{(s+s')}
\text{ and }\xi_i^{(s+s')}\text{ on }\Ls{0} \cap \phi^{s+s'} \Ls{0},\] and the other two corners in counterclockwise order are
\[
\phi^{s} z_i^{(s')}, \phi^{s} \xi_i^{(s')} \in \phi^{s} \Ls{0} \cap \phi^{s+s'} \Ls{0}, \qquad z_i^{(s)},\xi_i^{(s)} \in \Ls{0} \cap \phi^s \Ls{0}.
\]
For each such triangle there is a unique pair of integers $i$ and $j$ so that the triangle lifts to $\bR^2$ with boundary on the $x$-axis, the graph of $y = i - s \cos(2 \pi x)$, and the graph of $y = i+j - (s+s') \cos(2\pi x)$.  The only non-empty moduli spaces of triangles are
\begin{equation}
\label{eq:monomial-moduli}
\cM\left(z_{i+j}^{(s+s')},\phi^{s} z_j^{(s')},z_i^{(s)}\right) \quad \cM\left(\xi_{i+j}^{(s+s')},\phi^{s} z_j^{(s')},\xi_i^{(s)}\right) \quad \cM\left(\xi_{i+j}^{(s+s')},\phi^{s} \xi_j^{(s')},z_i^{(s)}\right) 
\end{equation}
with $-s < i < s$ and $-s' < j < s'$.  When $i/s = j/s'$, there is something tricky about the latter two moduli spaces (they are not transversely cut \S\ref{tcc}), so let us for a moment assume that $i/s \neq j/s'$.  Then each space \eqref{eq:monomial-moduli} contains exactly one triangle.  The nature of this triangle depends on which of $i/s$ or $j/s'$ is larger.
\medskip

The triangle of $\cM\left(z_{i+j}^{(s+s')},\phi^{s} z_j^{(s')},z_i^{(s)}\right)$ is the one bounded by
\begin{eqnarray}
\max(0,(i+j) - (s+s')\cos(2\pi x)) \leq y \leq i - s \cos(2 \pi x) & \text{if $j/s' < i/s$} \label{eq:4103} \\
\min(0,(i+j) - (s+s') \cos(2\pi x)) \geq y \geq i - s\cos(2 \pi x) & \text{if $j/s' > i/s$} \label{eq:4104}
\end{eqnarray} 
For legibility, in the following illustration of these triangles the curves $\phi^s \Ls{0}$ and $\phi^{s+s'} \Ls{0}$ are drawn in a different aspect ratio than in the diagram of \S\ref{subsec:49}, vertically compressed.  The figure is drawn in a union of $\sim s+s'$ fundamental domains, stacked on top of each other. 
\begin{center}
\begin{tikzpicture}[scale = 0.7]
  \draw[domain=0:6,smooth,variable=\x,purple] plot ({\x},{-.6-2.1*cos(60*\x)});
  \draw[domain=0:6,smooth,variable =\x,blue] plot ({\x},{.1-1.05*cos(60*\x)});
  \draw[domain=0:6,smooth,variable = \x,black] plot ({\x},0);
  \draw[domain=1.38:1.8-.08,smooth,variable = \x,black,ultra thick] plot ({\x},0); 
  \draw[domain=1.38:2.2,smooth,variable =\x,blue,ultra thick] plot ({\x},{.1-1.05*cos(60*\x)});
  \draw[domain=1.76+.06:2.2,smooth,variable=\x,purple,ultra thick] plot ({\x},{-.6-2.1*cos(60*\x)});
  \draw[red] (5.8,-3)--(5.8,2);  
\end{tikzpicture}
\qquad
\begin{tikzpicture}[scale = 0.7]
  \draw[domain=0:6,smooth,variable=\x,purple] plot ({\x},{.6-2.1*cos(60*\x)});
  \draw[domain=0:6,smooth,variable =\x,blue] plot ({\x},{-.1-1.05*cos(60*\x)});
  \draw[domain=0:6,smooth,variable = \x,black] plot ({\x},0);
  \draw[domain=1.19+.08:1.6,smooth,variable = \x,black,ultra thick] plot ({\x},0); 
  \draw[domain=.83:1.6,smooth,variable =\x,blue,ultra thick] plot ({\x},{-.1-1.05*cos(60*\x)});
  \draw[domain=.83:1.22-.04,smooth,variable=\x,purple,ultra thick] plot ({\x},{.6-2.1*cos(60*\x)});
  \draw[red] (5.8,-2)--(5.8,3); 
\end{tikzpicture}
\end{center}
In this and the following diagrams, $\phi^{s+s'}\Ls{0}$ is purple, $\phi^s \Ls{0}$ is blue, and $\Ls{0}$ is black.  The left side shows the typical case where $i/s > j/s'$, and the right side shows the typical case when $i/s < j/s'$.

In $\cM\left(\xi_{i+j}^{(s+s')},\phi^{s} z_j^{(s')},\xi_i^{(s)}\right)$ we have the triangle 
\begin{equation}
\label{eq:4105}
\begin{array}{cl}
 (\ref{eq:4103}) \cup \{ \min(0,i - s\cos(2 \pi x)) \geq y \geq (i+j) - (s+s') \cos(2 \pi x) \} & \text{if } \frac{j}{s'}<\frac{i}{s} \\
 (\ref{eq:4104}) \cup \{ \max(0,i -  s \cos(2 \pi x)) \leq y \leq (i+j) - (s+s') \cos(2 \pi x) \} & \text{if }\frac{j}{s'} >\frac{i}{s}
\end{array}
\end{equation}
\begin{center}
\begin{tikzpicture}[scale = 0.7]
  \draw[domain=-3:3,smooth,variable=\x,purple] plot ({\x},{-.6-2.1*cos(60*\x)});
  \draw[domain=-3:3,smooth,variable =\x,blue] plot ({\x},{.1-1.05*cos(60*\x)});
  \draw[domain=-3:3,smooth,variable = \x,black] plot ({\x},0);
  \draw[domain=-1.38:-1.8+.09,smooth,variable = \x,black,ultra thick] plot ({\x},0);
  \draw[domain=-1.38:2.2,smooth,variable =\x,blue,ultra thick] plot ({\x},{.1-1.05*cos(60*\x)});
  \draw[domain=-1.76+.06:2.2,smooth,variable=\x,purple,ultra thick] plot ({\x},{-.6-2.1*cos(60*\x)});
  \draw[red] (5.8-6,-3)--(5.8-6,2);
\end{tikzpicture}
\qquad
\begin{tikzpicture}[scale = 0.7]
  \draw[domain=0:6,smooth,variable=\x,purple] plot ({\x},{.6-2.1*cos(60*\x)});
  \draw[domain=0:6,smooth,variable =\x,blue] plot ({\x},{-.1-1.05*cos(60*\x)});
  \draw[domain=0:6,smooth,variable = \x,black] plot ({\x},0);
  \draw[domain=6-1.6:6-1.19-.09,smooth,variable = \x,black,ultra thick] plot ({\x},0);
  \draw[domain=.83:6-1.6,smooth,variable =\x,blue,ultra thick] plot ({\x},{-.1-1.05*cos(60*\x)});
  \draw[domain=.83:6-1.22-.06,smooth,variable=\x,purple,ultra thick] plot ({\x},{.6-2.1*cos(60*\x)});
  \draw[red] (5.8,-2)--(5.8,3);
\end{tikzpicture}
\end{center}

In $\cM\left(\xi_{i+j}^{(s+s')},\phi^{s} \xi_j^{(s')},z_i^{(s)}\right)$ we have the triangle
\begin{equation}
\label{eq:4106}
\begin{array}{cl}
(\ref{eq:4103}) \cup \{ \min(i -  s \cos(2 \pi x), i+j- (s+s') \cos(2\pi x))) \geq y \geq 0 \} & \text{if }\frac{j}{s'} <\frac{i}{s} \\ 
(\ref{eq:4104}) \cup \{ \max(i - s\cos(2 \pi x), i+j - (s+s') \cos(2\pi x)) \leq y \leq 0 \} & \text{if } \frac{j}{s'}>\frac{i}{s} 
\end{array}
\end{equation}

\begin{center}
\begin{tikzpicture}[scale = 0.7]
  \draw[domain=0:6,smooth,variable=\x,purple] plot ({\x},{-.6-2.1*cos(60*\x)});
  \draw[domain=0:6,smooth,variable =\x,blue] plot ({\x},{.1-1.05*cos(60*\x)});
  \draw[domain=0:6,smooth,variable = \x,black] plot ({\x},0);
  \draw[domain=1.38:6-1.8-.08,smooth,variable = \x,black,ultra thick] plot ({\x},0);
  \draw[domain=1.38:6-2.2,smooth,variable =\x,blue,ultra thick] plot ({\x},{.1-1.05*cos(60*\x)});
  \draw[domain=6-2.2:6-1.76-.06,smooth,variable=\x,purple,ultra thick] plot ({\x},{-.6-2.1*cos(60*\x)});
  \draw[red] (5.8,-3)--(5.8,2);
\end{tikzpicture}
\qquad
\begin{tikzpicture}[scale = 0.7]
  \draw[domain=0-3:6-3,smooth,variable=\x,purple] plot ({\x},{.6-2.1*cos(60*\x)});
  \draw[domain=0-3:6-3,smooth,variable =\x,blue] plot ({\x},{-.1-1.05*cos(60*\x)});
  \draw[domain=0-3:6-3,smooth,variable = \x,black] plot ({\x},0);
  \draw[domain=-1.19+.08:1.6,smooth,variable = \x,black,ultra thick] plot ({\x},0);
  \draw[domain=-.83:1.6,smooth,variable =\x,blue,ultra thick] plot ({\x},{-.1-1.05*cos(60*\x)});
  \draw[domain=-1.22+.06:-.83,smooth,variable=\x,purple,ultra thick] plot ({\x},{.6-2.1*cos(60*\x)});
  \draw[red] (5.8-6,-2)--(5.8-6,3);
\end{tikzpicture}
\end{center}

\medskip

Now we discuss the triangles with $i/s = j/s'$.  For generic $s$ and $s'$, it is only possible that $i/s = j/s'$ when $i = j = 0$.  In that case $\cM\left(z_0^{(s+s')},\phi^s z_0^{(s')}, z_0^{(s)}\right)$ again contains a single point (the constant map with value $z_0 = (.25,0)$), and is again transversely cut, but these two assertions are not true for $\cM\left(\xi_{0}^{(s+s')},\phi^{s} z_0^{(s')},\xi_0^{(s)}\right)$ or for $\cM\left(\xi_{0}^{(s+s')},\phi^{s} \xi_0^{(s')},z_0^{(s)}\right)$.  These spaces each contain two points, which are degenerate triangles (they are at the boundary of the Deligne-Mumford-Stasheff compactification) which are not maps out of a triangle but out of a wedge sum of triangle and a bigon:
\begin{center}
\begin{tikzpicture}[scale = 0.5]
   \draw [purple, ultra thick, domain = 5:120] plot ({(1-(\x/120)*(1-\x/120)) * cos(\x)}, {(1-(\x/120)*(1-\x/120)) *sin(\x)});
   \draw [blue, ultra thick, domain = 0:120] plot ({(1-(\x/120)*(1-\x/120)) * cos(\x+120)}, {(1-(\x/120)*(1-\x/120)) *sin(\x+120)});
    \draw [black, ultra thick, domain = 0:115] plot ({(1-(\x/120)*(1-\x/120)) * cos(\x+240)}, {(1-(\x/120)*(1-\x/120)) *sin(\x+240)});
    \draw [purple, ultra thick, domain = 0:180] plot ({2*cos(120)+(1-(\x/180)*(1-\x/180))*cos(\x-60)},{2*sin(120)+(1-(\x/180)*(1-\x/180))*sin(\x-60)});
    \draw [blue, ultra thick, domain = 0:180] plot ({2*cos(120)+(1-(\x/180)*(1-\x/180))*cos(\x+180-60)},{2*sin(120)+(1-(\x/180)*(1-\x/180))*sin(\x+180-60)}); 
\end{tikzpicture}
\qquad
\begin{tikzpicture}[scale = 0.5]
   \draw [purple, ultra thick, domain = 5:120] plot ({(1-(\x/120)*(1-\x/120)) * cos(\x)}, {(1-(\x/120)*(1-\x/120)) *sin(\x)});
   \draw [blue, ultra thick, domain = 0:120] plot ({(1-(\x/120)*(1-\x/120)) * cos(\x+120)}, {(1-(\x/120)*(1-\x/120)) *sin(\x+120)});
    \draw [black, ultra thick, domain = 0:115] plot ({(1-(\x/120)*(1-\x/120)) * cos(\x+240)}, {(1-(\x/120)*(1-\x/120)) *sin(\x+240)});
    \draw [blue, ultra thick, domain = 0:180] plot ({2*cos(120)+(1-(\x/180)*(1-\x/180))*cos(\x+60)},{-2*sin(120)+(1-(\x/180)*(1-\x/180))*sin(\x+60)});
    \draw [black, ultra thick, domain = 0:180] plot ({2*cos(120)+(1-(\x/180)*(1-\x/180))*cos(\x+180+60)},{-2*sin(120)+(1-(\x/180)*(1-\x/180))*sin(\x+180+60)}); 
\end{tikzpicture}
\end{center}
The degenerate maps in $\cM\left(\xi_{0}^{(s+s')},\phi^{s} z_0^{(s')},\xi_0^{(s)}\right)$ and $\cM\left(\xi_{0}^{(s+s')},\phi^{s} \xi_0^{(s')},z_0^{(s)}\right)$ collapse the triangle part to a point (to $\xi_0 = (.75,0)$) but are nontrivial along the bigon:

\begin{equation}
\label{eq:two-by-two}
\begin{aligned}
\begin{tikzpicture}[scale = 0.7]
  \draw[domain=-3:3,smooth,variable=\x,purple] plot ({\x},{-2.1*cos(60*\x)});
  \draw[domain=-3:3,smooth,variable =\x,blue] plot ({\x},{-1.05*cos(60*\x)});
  \draw[domain=-3:3,smooth,variable = \x,black] plot ({\x},0);
  \draw[domain=-1.5:1.5,smooth,variable =\x,blue,ultra thick] plot ({\x},{-1.05*cos(60*\x)});
  \draw[domain=-1.5:1.5,smooth,variable=\x,purple,ultra thick] plot ({\x},{-2.1*cos(60*\x)});
  \node at (-1.5,0) {$\blacktriangle$};
  \draw[red] (5.8-6,-2.5)--(5.8-6,2.5);
\end{tikzpicture}
& \qquad &
\begin{tikzpicture}[scale = 0.7]
  \draw[domain=0:6,smooth,variable=\x,purple] plot ({\x},{-2.1*cos(60*\x)});
  \draw[domain=0:6,smooth,variable =\x,blue] plot ({\x},{-1.05*cos(60*\x)});
  \draw[domain=0:6,smooth,variable = \x,black] plot ({\x},0);
  \draw[domain=1.5:6-1.5,smooth,variable =\x,blue,ultra thick] plot ({\x},{-1.05*cos(60*\x)});
  \draw[domain=1.5:6-1.5,smooth,variable=\x,purple,ultra thick] plot ({\x},{-2.1*cos(60*\x)});
  \node at (4.5,0) {$\blacktriangle$};
  \draw[red] (5.8,-2.5)--(5.8,2.5);
\end{tikzpicture}
\\
\\
\begin{tikzpicture}[scale = 0.7]
  \draw[domain=0:6,smooth,variable=\x,purple] plot ({\x},{-2.1*cos(60*\x)});
  \draw[domain=0:6,smooth,variable =\x,blue] plot ({\x},{-1.05*cos(60*\x)});
  \draw[domain=0:6,smooth,variable = \x,black] plot ({\x},0);
  \draw[domain=1.5:6-1.5,smooth,variable = \x,black,ultra thick] plot ({\x},0);
  \draw[domain=1.5:6-1.5,smooth,variable =\x,blue,ultra thick] plot ({\x},{-1.05*cos(60*\x)});
  \node at (4.5,0) {$\blacktriangle$};
  \draw[red] (5.8,-2.5)--(5.8,2.5);
\end{tikzpicture}
& \qquad &
\begin{tikzpicture}[scale = 0.7]
  \draw[domain=0-3:6-3,smooth,variable=\x,purple] plot ({\x},{-2.1*cos(60*\x)});
  \draw[domain=0-3:6-3,smooth,variable =\x,blue] plot ({\x},{-1.05*cos(60*\x)});
  \draw[domain=0-3:6-3,smooth,variable = \x,black] plot ({\x},0);
  \draw[domain=-1.5:1.5,smooth,variable = \x,black,ultra thick] plot ({\x},0);
  \draw[domain=-1.5:1.5,smooth,variable =\x,blue,ultra thick] plot ({\x},{-1.05*cos(60*\x)});
  \draw[red] (5.8-6,-2.5)--(5.8-6,2.5);
  \node at (-1.5,0) {$\blacktriangle$};
\end{tikzpicture}
\end{aligned}
\end{equation}
The top two figures indicate the two points of  $\cM\left(\xi_{0}^{(s+s')},\phi^{s} z_0^{(s')},\xi_0^{(s)}\right)$, the bottom two are the two points of $\cM\left(\xi_{0}^{(s+s')},\phi^{s} \xi_0^{(s')},z_0^{(s)}\right)$.  Though they are not transversely cut they have analytic index zero --- more precisely they have index $+1$ along the constant triangle and index $-1$ along the bigon.

\subsection{Triangle products on $\CF_{\mathrm{loud}}$}
For short, let us put $A^{(s)} := \CF(\phi^s \Ls{0},\Ls{0};\underline{C})$.  The triangles in the previous section, together with the identification  $\CF(\phi^{s+s'},\phi^{s} \Ls{0};\underline{C})$ of and $\CF(\phi^{s'} \Ls{0},\Ls{0};\underline{C})$, give a multiplication
$A^{(s)} \times A^{(s')} \to A^{(s+s')}$
specifically 
\begin{itemize}
\item (Coming from \eqref{eq:4103} and \eqref{eq:4104})
\[
(z_i^{(s)} \cdot a, z_j^{(s')} \cdot b) \mapsto z_{i+j}^{(s+s')} \cdot ab\]
\item (Coming from \eqref{eq:4105})
\begin{equation}
\label{eq:4105p}
(\xi_i^{(s)} \cdot a,z_j^{(s')} \cdot b) \mapsto \xi_{i+j}^{(s+s')}\cdot \begin{cases}
a\sigma(b)  & \text{if $j/s' < i/s$} \\
ab & \text{if $j/s' > i/s$}
\end{cases}
\end{equation}
\item (Coming from \eqref{eq:4106})
\begin{equation}
\label{eq:4106p}
(z_i^{(s)} \cdot a, \xi_j^{(s')} \cdot b) \mapsto \xi_{i+j}^{(s+s')} \cdot \begin{cases}
ab & \text{if $j/s' < i/s$} \\
\sigma(a)b & \text{if $j/s' > i/s$}
\end{cases}
\end{equation}
\end{itemize}

Since  $\cM\left(\xi_{0}^{(s+s')},\phi^{s} z_0^{(s')},\xi_0^{(s)}\right)$ and $\cM\left(\xi_{0}^{(s+s')},\phi^{s} \xi_0^{(s')},z_0^{(s)}\right)$ are not transversely cut, they carry a virtual fundamental class rather than an orientation.  We will simply put
\begin{equation}
\label{eq:00}
(\xi_0^{(s)} \cdot a, z_0{(s')} \cdot b) \mapsto \xi_0^{(s+s')} \cdot ab \qquad (z_0^{(s)} a, \xi_0^{(s')} b) \mapsto \xi_0^{(s+s')} \cdot \sigma(a)b
\end{equation}
as though the left two degenerate triangles displayed in \eqref{eq:two-by-two} contributed nothing. The same issue can also be addressed by introducing a Hamiltonian perturbation $\psi$ of $\Ls{0}$ (but not $\phi^s \Ls{0}$ or $\phi^{s+s'} \Ls{0})$ supported in a very small neighborhood of $z_0$ and $\xi_0$.  In that case all moduli spaces are transversely cut and the triangle products $\mu_2(\psi z_0^{(s)} \cdot a, \phi^s \xi_0^{(s')} \cdot b)$ and $\mu_2(\psi \xi_0^{(s)} \cdot a,\phi^s z_0^{(s')} \cdot b)$ are well-defined, though the specific formula will depend on $\psi$ --- \eqref{eq:00} is consistent with some of these $\psi$.
\medskip

Now we use the products $A^{(s)} \times A^{(s')} \to A^{(s+s')}$ to define a multiplication on $\lim_s A^{(s)}$, i.e. on 
$
\CF_{\mathrm{loud}}(\Ls{0},\Ls{0};\underline{C})$.  It is not quite straightforward, because the products \eqref{eq:4105p} and \eqref{eq:4106p} are not eventually constant as $s$ and $s'$ grow --- it depends on which of $i/s$ and $j/s'$ are larger.  The square
\begin{equation}
\label{eq:asas}
\begin{aligned}
\xymatrix{
A^{(s)} \times A^{(s')} \ar[d] \ar[r] & A^{(s+s')} \ar[d] \\
A^{(S)} \times A^{(S')} \ar[r] & A^{(S+S')}
}
\end{aligned}
\end{equation}
does not commute for all $s,s',S,S'$ with $s < S$ and $s' < S'$.  We address this in the following crude way: we choose an irrational number $e >0$, and note that since $i/s - j/(es)$ has constant sign for $s > 0$,  \eqref{eq:asas} does commute when $s' = es$ and $S' = eS$.  The induced multiplication on the colimit is explicitly $(z_i \cdot a, z_j \cdot b) \mapsto z_{i+j} \cdot ab$ and 
\[
(\xi_i \cdot a,z_j \cdot b) \mapsto \xi_{i+j} \cdot \begin{cases}
a \sigma(b) & \text{if $j/e < i$} \\
ab & \text{if $j/e \geq i$} \\
\end{cases}
\qquad (z_i \cdot a, \xi_j \cdot b) \mapsto \xi_{i+j} \cdot \begin{cases}
ab & \text{if $j/e < i$} \\
\sigma(a)b & \text{if $j/e \geq i$}
\end{cases}
\]
Thus we get one binary operation on $\CF_{\mathrm{loud}}(\Ls{0},\Ls{0};\underline{C})$ for every irrational $e > 0$.  These multiplications are genuinely different for different $e$.  Moreover, they are not associative; they do however, obey the Leibniz rule $\mu_1(w w') = \mu_1(w) w' + w \mu_1(w')$, with $\mu_1$ as in \eqref{eq:cfloud-mu1}.  It is likely that they can be extended to an $A_{\infty}$-structure on $\CF_{\mathrm{loud}}(\Ls{0},\Ls{0};\underline{C})$ (and even more likely that there is such an $A_{\infty}$-structure on a complex quasi-isomorphic to it, defined along the lines of \cite{Abouzaid-Seidel}), but we will not construct it.  Instead we simply note that the induced multiplication on $\HF^0_{\mathrm{loud}} := \ker(\mu_1)$ (and even on $\HF^0_{\mathrm{loud}} \oplus \HF^1_{\mathrm{loud}}$, though the degree $1$ part vanishes if $C$ is algebraically closed and $\sigma$ is the $p$th root map), is associative, and independent of $e$.  Indeed it is simply the Laurent polynomial ring $C^{\sigma}[z^{\pm 1}]$ under the assignment $\sum c_i z^i \mapsto \sum z_i \cdot c_i$.

\end{document}